
\documentclass[10pt]{amsart}
\bibliographystyle{amsalpha}

\setlength{\topmargin}{-1.cm} \setlength{\headsep}{1.6cm}
\setlength{\evensidemargin}{.7cm} \setlength{\oddsidemargin}{.7cm}
\setlength{\textheight}{21.cm} \setlength{\textwidth}{15.0cm}

\let\oldmarginpar\marginpar
\renewcommand\marginpar[1]{\-\oldmarginpar[\raggedleft\footnotesize #1]%
{\raggedright\footnotesize #1}}


\usepackage{mathptmx}
\usepackage{amsmath}
\usepackage{amscd}
\usepackage{amssymb}
\usepackage{amsthm}
\usepackage{xspace}
\usepackage[all,tips]{xy}
\usepackage[dvips]{graphicx}
\usepackage{verbatim}
\usepackage{syntonly}
\usepackage{hyperref}
\usepackage{arydshln}
\usepackage{lineno}


\theoremstyle{plain}
\newtheorem{thm}{Theorem}[section]
\newtheorem{cor}[thm]{Corollary}
\newtheorem{prop}[thm]{Proposition}
\newtheorem{lemma}[thm]{Lemma}

\newtheorem{defn}[thm]{Definition}

\newtheoremstyle{TheoremNum}
{\topsep}{\topsep}
{\itshape}
{}
{\bfseries}
{.}
{ }
{\thmname{#1}\thmnote{ \bfseries #3}}
\theoremstyle{TheoremNum}
\newtheorem{thmn}{Theorem}

\DeclareMathOperator{\lcm}{lcm}

\DeclareMathOperator{\D}{D}

\DeclareMathOperator{\Farb}{F}

\DeclareMathOperator{\RD}{RF}

\DeclareMathOperator{\RFU}{RFU}
\DeclareMathOperator{\RFL}{RFL}
\DeclareMathOperator{\ARF}{ARF}

\DeclareMathOperator{\LR}{LR}
\DeclareMathOperator{\UR}{UR}

\DeclareMathOperator{\RP}{RP}


\newcommand{\bdef}{\overset{\text{def}}{=}}


\newcommand{\al}{\alpha}

\newcommand{\ga}{\gamma}

\newcommand{\la}{\lambda}




\newcommand{\innp}[1]{\left< #1 \right>}

\newcommand{\set}[1]{\left\{#1\right\}}

\newcommand{\pr}[1]{\left( #1 \right) }



\newcommand{\Fr}[1]{\ensuremath{\mathfrak{#1}}}


\newcommand{\N}{\ensuremath{\mathbb{N}}}
\newcommand{\Q}{\ensuremath{\mathbb{Q}}}
\newcommand{\R}{\ensuremath{\mathbb{R}}}
\newcommand{\Z}{\ensuremath{\mathbb{Z}}}

\newcommand{\nsub}{\trianglelefteq}


%

%
%

%
%
\newcommand{\map}[3]{#1 : #2 \rightarrow #3}

\providecommand{\customgenericname}{}
\newcommand{\newcustomtheorem}[2]{%
	\newenvironment{#1}[1]
	{%
		\renewcommand\customgenericname{#2}%
		\renewcommand\theinnercustomgeneric{##1}%
		\innercustomgeneric
	}
	{\endinnercustomgeneric}
}

\newcustomtheorem{customthm}{Theorem}
\newcustomtheorem{customproposition}{Proposition}

\newtheorem*{fake1}{\cite[\textbf{Theorem 1.1}]{Pengitore_1}}

\begin{document}

\title{\textbf{Residual dimension of nilpotent groups}}
\author{Mark Pengitore}

\maketitle

\begin{abstract}
The function $\Farb_{G}(n)$ gives  the maximum order of a finite group  needed to distinguish a nontrivial element of $G$ from the identity with a surjective group morphism as one varies over nontrivial elements of word length at most $n$. In previous work \cite{Pengitore_1}, the author claimed a characterization for $\Farb_{N}(n)$ when $N$ is a finitely generated nilpotent group. However, a counterexample to the above claim  was communicated to the author, and subsequently, the statement of the asymptotic characterization of $\Farb_{N}(n)$ is incorrect. In this article, we introduce new tools to provide lower asymptotic bounds for $\Farb_{N}(n)$ when $N$ is a finitely generated nilpotent group. Moreover, we introduce a class of finitely generated nilpotent groups for which the upper bound of \cite{Pengitore_1} can be improved. Finally, we construct of a class of finitely generated nilpotent groups $N$ for which the asymptotic behavior of $\Farb_N(n)$ can be fully characterized.
\end{abstract}

\section{Introduction}

A group $G$ is \textbf{residual finite} if for each nontrivial element $x \in G$, there exists a surjective group morphism $\map{\varphi}{G}{Q}$ to a finite group such that $\varphi(x) \neq 1$. When $G$ comes equipped with a finite generating subset $S$, we are able to quantify residual finiteness of $G$ with the function $\Farb_{G,S}(n)$ introduced by Bou-Rabee in \cite{Bou_Rabee10} which is the value of $\Farb_{G,S}(n)$ is the maximum order of a finite group  needed to distinguish a nontrivial element from the identity as one varies over nontrivial elements of word length at most $n$. Since the dependence of $\Farb_{G,S}(n)$ on $S$ is mild by a result in \cite{Bou_Rabee10}, we suppress the generating subset throughout the introduction.

In previous work \cite{Pengitore_1}, we claimed an effective characterization of $\Farb_{N}(n)$ when $N$ is an infinite, finitely generated nilpotent group as seen in the following theorem. Note that the numbering and any unexplained terminology comes from \cite{Pengitore_1}. \newline

\begin{fake1}\label{bad_first_thm}
	Let $N$ be an infinite, finitely generated nilpotent group. Then there exists a $\psi_{\RD}(N) \in \N$ such that
	$
	\Farb_N(n) \approx \pr{\log(n)}^{\psi_{\RD}(N)}.
	$
	Additionally, we may compute $\psi_{\RD}(N)$ given a basis for $\ga_c(N / T(N))$ where $c$ is the step length of $N / T(N)$. \newline
\end{fake1}

Khalid Bou-Rabee communicated to us a counterexample to \cite[Theorem 1.1]{Pengitore_1}. To be specific, he constructed a torsion free, finitely generated nilpotent group $N$ where \cite[Theorem 1.1]{Pengitore_1} predicts that $\Farb_{N}(n)$ grows asymptotically as $\pr{\log(n)}^5$, but where it can be shown that $\Farb_N(n)$ is bounded asymptotically above by $\pr{\log(n)}^4$.   Upon inspection of the article, it turns out that \cite[Proposition 4.10]{Pengitore_1} is incorrect (see Proposition \ref{incorrect_prop}). The upper bound for $\Farb_{N}(n)$ produced in \cite{Pengitore_1}, while no longer sharp, is still correct. The purpose of this article is to provide asymptotic bounds for $\Farb_N(n)$ when $N$ is finitely generated nilpotent group which takes into account Bou-Rabee's construction. We start by giving an exact computation of $\Farb_N(n)$ when the nilpotent group has an abelian, torsion free quotient, and when the nilpotent group has a non-abelian, torsion free quotient, we provide a lower asympototic bound for $\Farb_N(n)$ in terms of the step length of $N$. Moreover, we provide conditions for when we can construct an upper asymptotic bound for $\Farb_{N}(n)$ that is better than the one given by \cite[Theorem 1.1]{Pengitore_1}. Finally, we provide conditions on the nilpotent group for when our methods can explicitly compute $\Farb_N(n)$. 

Before we start, we introduce some notation. For two nondecreasing functions $\map{f,g}{\N}{\N}$, we say that $f \precsim g$ if there exists a constant $C>0$ such that $f(n) \leq C \: g(Cn)$. We say that $f \approx g$ if $f \precsim g$ and $g \precsim f$. For a group $G$, we denote $\ga_i(G)$ as the $i$-th step of the lower central series of $G$. Finally, for a finitely generated nilpotent group $N$, we denote $h(N)$ as its Hirsch length and define $T(N)$ as the subgroup generated by finite order elements (see Section 2 for the definitions of $h(N)$ and $T(N)$).

\subsection{Residual finiteness}
We start with the case of infinite, finitely generated nilpotent groups $N$ where $N/T(N)$ is abelian.
\begin{thm}\label{torsion_free_abelian}
	Let $N$ be an infinite, finitely generated nilpotent group such that $N/T(N)$ is abelian. Then
	$$
	\Farb_{N}(n) \approx \log(n).
	$$
\end{thm}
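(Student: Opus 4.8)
The plan is to establish the two estimates $\Farb_N(n) \precsim \log(n)$ and $\log(n) \precsim \Farb_N(n)$ separately. First I record the structural facts I will use: a finitely generated nilpotent group is residually finite, and its torsion subgroup $T(N)$ is finite and normal with $N/T(N)$ torsion free; since $N$ is infinite and $N/T(N)$ is abelian, this forces $N/T(N) \cong \Z^d$ for some $d \ge 1$. Fix a finite generating set $S$ of $N$ and let $\bar S \subset \Z^d$ denote the image of $S$ under the quotient map $q \colon N \twoheadrightarrow N/T(N) \cong \Z^d$.

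For the upper bound, I would bound the smallest separating quotient of a nontrivial $x$ with $\|x\|_S \le n$ by splitting into two cases. If $x \in T(N)$, then because $T(N)$ is finite and $N$ is residually finite there is a fixed finite-index normal subgroup $M \nsub N$ with $M \cap T(N) = \{1\}$; the finite quotient $N/M$ has order independent of $n$ and separates every nontrivial element of $T(N)$, so torsion elements contribute only $O(1)$ to $\Farb_N(n)$. If $x \notin T(N)$, then $q(x) \in \Z^d$ is nonzero; writing $q(x)$ as a product of at most $n$ elements of $\bar S^{\pm 1}$ shows every coordinate of $q(x)$ has absolute value at most $Cn$, where $C = \max_{s \in S}\|q(s)\|_{\infty}$, so some coordinate equals an integer $a$ with $1 \le |a| \le Cn$. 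By Chebyshev's estimate $\prod_{p \le K} p \ge e^{c_0 K}$ there is a prime $p$ with $p \nmid a$ and $p = O(\log n)$, and the composite of $q$, the projection onto that coordinate, and reduction modulo $p$ is a finite quotient of $N$ of order $p = O(\log n)$ on which $x$ is nontrivial. Taking the maximum of the two cases gives $\Farb_N(n) \precsim \log(n)$. The abelian quotient is essential precisely here: it lets us land in a single cyclic group $\Z/p\Z$ rather than in a quotient of size polynomial in $\log n$.

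For the lower bound — which in fact only uses that $N$ is infinite, residually finite, and contains an element of infinite order — I would fix $g \in N$ of infinite order and consider the elements $x_t = g^{\lcm(1,2,\dots,t)}$ for $t \ge 1$. Each $x_t$ is nontrivial, and $\|x_t\|_S \le \lcm(1,\dots,t)\,\|g\|_S \le e^{C_1 t}\|g\|_S$ using Chebyshev's upper bound $\lcm(1,\dots,t) \le e^{C_1 t}$. If $\phi \colon N \twoheadrightarrow Q$ is any finite quotient with $\phi(x_t) \ne 1$, then $\phi(g)$ has order not dividing $\lcm(1,\dots,t)$, hence of order greater than $t$, so $|Q| > t$ by Lagrange. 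Thus $\Farb_N\bigl(e^{C_1 t}\|g\|_S\bigr) \ge t+1$; feeding in $t = \lfloor C_1^{-1}\log(n/\|g\|_S)\rfloor$ for large $n$ and using that $\Farb_N$ is nondecreasing gives $\Farb_N(n) \ge t+1$, whence $\log(n) \precsim \Farb_N(n)$. (Alternatively, one can deduce the lower bound from the known fact $\Farb_{\Z}(n) \approx \log(n)$ together with the elementary observation that, for $h$ in the subgroup $\langle g \rangle \cong \Z$, the smallest finite quotient of $\langle g\rangle$ separating $h$ is no larger than the smallest finite quotient of $N$ separating $h$.)

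Combining the two bounds yields $\Farb_N(n) \approx \log(n)$. I do not expect a serious obstacle: the only nontrivial inputs are the two classical Chebyshev-type estimates on $\prod_{p\le K}p$ and on $\lcm(1,\dots,t)$, and the only care needed is the bookkeeping relating the word length of $x$ in $N$ to the size of its coordinates in $\Z^d$ and hence to the size of the separating prime, together with absorbing the behaviour at small $n$ into the constants implicit in $\approx$.
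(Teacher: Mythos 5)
Your proof is correct, but it takes a more self-contained route than the paper's, which is worth noting. The paper's proof is a two-line reduction: it invokes Proposition~\ref{torsion_to_torsion_free} to pass to $N/T(N)$ (that proposition is itself an induction on $|T(N)|$ that rests on a result of Bou-Rabee--Kaletha about central extensions by finite groups), and then cites the known fact $\Farb_{\Z^d}(n)\approx\log(n)$ from \cite{Bou_Rabee10}. You instead unfold both black boxes. For the upper bound you split into $x\in T(N)$ versus $x\notin T(N)$: the torsion case is handled by a single finite quotient $N/M$ of bounded order that separates the whole of $T(N)$ (so you only need the easy one-sided half of what Proposition~\ref{torsion_to_torsion_free} supplies, and you avoid the central-extension machinery entirely), while the non-torsion case re-derives the abelian upper bound by projecting to a nonzero coordinate of $q(x)\in\Z^d$ of size $O(n)$ and reducing modulo a prime $p=O(\log n)$ not dividing it. For the lower bound you reproduce the standard Bou-Rabee argument with $g^{\lcm(1,\dots,t)}$ rather than citing it; your crude bound $\|g^m\|\le m\|g\|$ is wasteful in a nilpotent group but harmless since it only enlarges the test words. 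Both routes are correct; the paper's is shorter because it leans on structural results already in the literature, while yours is more elementary and makes explicit exactly which arithmetic input (Chebyshev/PNT estimates) drives both directions.
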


The situation is more interesting when $N/T(N)$ has step length $c > 1$ as seen in the following theorem.
\begin{thm}\label{lower_bound_rf_result}
		Let $N$ be an infinite, finitely generated nilpotent group such that $N / T(N)$ has step length $c > 1$. There exists a natural number $\dim_{\RFL}(N)$ such that $\dim_{\RFL}(N) \geq c + 1$ and where
		$$
		\pr{\log(n)}^{\dim_{\RFL}(N)} \precsim \Farb_{N}(n).
		$$
	\end{thm}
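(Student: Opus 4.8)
The plan is to exhibit, for each $n$, an explicit nontrivial element of word length at most $n$ that cannot be separated in any finite quotient of order below a fixed constant times $(\log n)^{c+1}$, and then to set $\dim_{\RFL}(N)$ to be the largest natural number $k$ with $(\log n)^{k} \precsim \Farb_N(n)$; this maximum is well defined because \cite[Theorem 1.1]{Pengitore_1} still furnishes a valid upper bound of the form $(\log n)^{k_0}$, and the construction below shows it is at least $c+1$.

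First I would locate a deep element of infinite order. Since $N/T(N)$ is torsion free of step exactly $c$, the group $\ga_c(N/T(N))$ is free abelian of positive rank and is generated by the images of the weight-$c$ basic commutators in a finite generating set $S$; as a free abelian group of positive rank is not torsion, at least one such basic commutator $w = w(s_{i_1},\dots,s_{i_c})$ has infinite order in $N/T(N)$, hence in $N$. The advantage of using a single basic commutator rather than an arbitrary element of $\ga_c(N)$ is the multilinearity of the weight-$c$ commutator map modulo $\ga_{c+1}(N) = 1$: for every $k$ one has the identity $w(s_{i_1}^{k},\dots,s_{i_c}^{k}) = w^{k^{c}}$ in $N$, and the left-hand side is a word of length $O_{c}(k)$ in $S$, so $\norm{w^{k^{c}}}_{N,S} \le C k$ for a constant $C = C(N,S,c)$.

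Next I would choose the exponent arithmetically. Put $t = \lfloor \eps \log n\rfloor$ for a small $\eps > 0$ and $k = \lcm(1,2,\dots,t)$; by the prime number theorem $k = e^{(1+o(1))t}$, so $Ck \le n$ for all large $n$ once $\eps$ is small, and then $g_n := w^{k^{c}}$ is nontrivial with $\norm{g_n}_{N,S} \le n$. The point is that $v_p(k^{c}) = c\lfloor \log_p t\rfloor$ for $p \le t$ and $v_p(k^{c}) = 0$ for $p > t$. Now suppose $\map{\varphi}{N}{Q}$ is a surjection onto a finite group with $\varphi(g_n) \ne 1$. Since $Q$ is finite nilpotent it is the direct product of its Sylow subgroups, so replacing $Q$ by a Sylow image (which only decreases $\abs{Q}$ and is again a quotient of $N$) we may assume $Q$ is a finite $p$-group of nilpotency class at most $c$. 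Then $\varphi(w) \in \ga_c(Q)$, and $\varphi(w)^{k^{c}} \ne 1$ forces its order $p^{e}$ to satisfy $e > v_p(k^{c})$; combining with the valuation computation gives $p^{e} > t$ in every case, so $\ga_c(Q)$ contains an element of order exceeding $t \asymp \log n$.

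The group-theoretic heart of the argument is then the lemma: \emph{if $P$ is a finite $p$-group of nilpotency class at most $c \ge 2$ and $\ga_c(P)$ contains an element of order $p^{e}$, then $\abs{P} \ge p^{(c+1)e}$} (a bounded-factor loss here is harmless for $\precsim$). Granting this with $P = Q$ yields $\abs{Q} \ge p^{(c+1)e} \ge t^{c+1} \asymp (\log n)^{c+1}$, whence $(\log n)^{c+1} \precsim \Farb_N(n)$ and $\dim_{\RFL}(N) \ge c+1$. I would prove the lemma by descending induction along the lower central series: choose a left-normed weight-$c$ commutator $[x_1,\dots,x_c]$ accounting for a bounded power of the given order-$p^{e}$ element, and use the multilinear identity $[y^{p^{s}},x_2,\dots] = [y,x_2,\dots]^{p^{s}}$ modulo $\ga_{c+1}(P) = 1$ to force each $x_i$, and each partial commutator $[x_1,\dots,x_j]$ read in $\ga_j(P)/\ga_{j+1}(P)$, to have order at least $p^{e-O(1)}$; comparing the orders of the $c$ successive layers $\ga_j(P)/\ga_{j+1}(P)$ together with $\abs{\ga_c(P)} \ge p^{e}$ then produces the exponent $(c+1)e$. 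The main obstacle I anticipate is precisely the bookkeeping in this last step: one must control the intersections $\langle [x_1,\dots,x_j]\rangle \cap \ga_{j+1}(P)$ (choosing the $x_i$ as lifts of generators of $P/\ga_2(P)$ helps) so that the contributions of the $c$ layers and of $\ga_c(P)$ genuinely multiply rather than overlap, and it is exactly here that the hypothesis $c > 1$ is used; the resulting bound is an inequality, not an equality, and pinning down the exact value of $\dim_{\RFL}(N)$ is left to the later sections.
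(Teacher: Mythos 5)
Your approach is genuinely different from the paper's, and parts of it are correct, but it does not actually prove the theorem as the paper intends it, for two reasons.

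First, the mismatch with the invariant $\dim_{\RFL}(N)$. In the paper, $\dim_{\RFL}(N)$ is not ``the largest $k$ with $(\log n)^k \precsim \Farb_N(n)$'' --- it is an intrinsic arithmetic invariant, defined in \S\ref{residual_set_up} via $p$-witnesses: for a primitive $z \in \sqrt{\ga_c(N)}$, one sets $\dim_{\RFL}(N,z)$ to be the least $i$ with $|\RP_{N,z,i}| = \infty$, and then $\dim_{\RFL}(N) = \max_z \dim_{\RFL}(N,z)$. The substance of Theorem \ref{lower_bound_rf_result} is precisely that \emph{this} quantity satisfies $(\log n)^{\dim_{\RFL}(N)} \precsim \Farb_N(n)$, which the paper proves by choosing a primitive $z$ realizing the max, taking $m_j = (\lcm\{1,\dots,p_j-1\})^{\dim_{\RFL}(N)+1}$ for a sequence of primes $p_j \in \LR_{N,z}$, and showing that $\D_N(z^{km_j}) \approx p_j^{\dim_{\RFL}(N)}$ by a case analysis on $|Q| = q^\la$. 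Since $\dim_{\RFL}(N)$ can exceed $c+1$, your construction --- which only yields $(\log n)^{c+1} \precsim \Farb_N(n)$ --- gives a strictly weaker lower bound whenever $\dim_{\RFL}(N) > c+1$. Redefining $\dim_{\RFL}(N)$ to be the optimal exponent makes the displayed inequality tautological and removes the content; it also destroys the interface with Theorems \ref{upper_bounds_residual_function} and \ref{accessible_result}, where $\dim_{\RFL}(N)$ is compared against $\dim_{\RFU}(N)$ and $\psi_{\RD}(N)$.

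Second, the bookkeeping in your $p$-group lemma is off by one as written. You claim $c$ layers $\ga_j(P)/\ga_{j+1}(P)$ each of size $\geq p^e$ ``together with'' $|\ga_c(P)| \geq p^e$, but $\ga_c(P)/\ga_{c+1}(P) = \ga_c(P)$ is the $j=c$ layer, so you are double-counting and actually land on $p^{ce}$, not $p^{(c+1)e}$. The lemma is nonetheless true; the extra factor of $p^e$ comes from the $j=1$ layer contributing $p^{2e}$, not $p^e$. Concretely: if $[a,b]$ has order $p^e$ in a class-$2$ group, then $\bar{a}$ and $\bar{b}$ both have order $\geq p^e$ in $P/\ga_2(P)$, and if $\bar{a}^m = \bar{b}^n \neq 1$ then $[a,b]^m = 1$ forces $p^e \mid m$, whence $|\langle \bar{a}\rangle \cap \langle \bar{b}\rangle| \leq |\langle\bar{a}\rangle|/p^e$ and $|P/\ga_2(P)| \geq p^{2e}$, giving $|P| \geq p^{3e}$. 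For $c > 2$, one should then induct by passing to $P/\ga_c(P)$ (class $c-1$) with the inner commutator $[x_2,\dots,x_c]$ of order $\geq p^e$ in $\ga_{c-1}(P)/\ga_c(P)$, obtaining $|P/\ga_c(P)| \geq p^{ce}$ and finally $|P| \geq p^{(c+1)e}$. Your proposed counting does not produce this and the gap is exactly the intersection bookkeeping you flagged as ``the main obstacle''.

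It is worth noting what your route \emph{does} buy: the elements $w^{k^c}$ with $w$ a weight-$c$ commutator and $k = \lcm(1,\dots,t)$ give a clean, self-contained proof of the universal bound $(\log n)^{c+1} \precsim \Farb_N(n)$, essentially bootstrapping Lemma \ref{p_dim_step_length} to a quantitative version ($|P| \geq p^{(c+1)e}$ when $\ga_c(P)$ has exponent $\geq p^e$), entirely avoiding the $\RP_{N,z,i}$ machinery. But this is the ``weakest possible'' case $\dim_{\RFL}(N) = c+1$; to recover the theorem as stated you would still need the paper's $p$-witness framework to detect the cases where $\dim_{\RFL}(N,z) > c+1$ for some primitive $z$. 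Also, your multilinearity identity $w(s_{i_1}^k,\dots,s_{i_c}^k) = w^{k^c}$ holds only modulo $\ga_{c+1}$, so the construction must be run in $N/T(N)$ (where $\ga_{c+1}$ vanishes), with Proposition \ref{torsion_to_torsion_free} invoked to pass back to $N$; as written you slide between $N$ and $N/T(N)$ without making this reduction explicit.
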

	
	For this theorem, we need to find an infinite sequence of elements $\set{x_i}_{i=1}^{\infty}$ such that the minimal finite order group $Q_i$ where there exists a surjective group morphism $\map{\varphi_i}{N}{Q_i}$ satisfying $\varphi_i(x_i) \neq 1$ has order approximately $\pr{\log(\|x_i\|}^{\dim_{\RFL}(N)}$. We start the search for this sequence by demonstrating that we may assume that the nilpotent group $N$ is torsion free. We then find an element $x \in N$ such that there exists an infinite sequence of natural numbers $\set{m_i}_{i=1}^{\infty}$ for which our desired sequence of elements is given by $\set{x^{m_i}}_{i=1}^{\infty}$.  Any such element $x$ will be a primitive element in the isolator of $\ga_c(N)$, and associated to these elements, there exists a natural number invariant $\dim_{\RFL}(N,x)$ which captures the lower asymptotic behavior of separating powers of $x$ from the identity with surjective group morphisms to finite $p$-groups as one varies the prime number $p$. Letting $\RP_{N,x,i}$ be the set of primes $p$ such the order of a minimal finite $p$-group which witnesses the primitive element $x$ has order $p^i$, we take $\dim_{\RFL}(N,x)$ to be the minimal $i_0$ such that $|\RP_{N,x, {i_0}}| = \infty$. By maximizing $\dim_{\RFL}(N,x)$ over all primitive elements in the isolator of $\ga_{c}(N)$, we obtain the value $\dim_{\RFL}(N)$.
	
	For the next theorem, we say a finitely generated nilpotent group has \textbf{tame residual dimension} if the sets $\RP_{N,x,i}$ have a natural density for all primitive elements $x$ in the isolator of $\ga_c(N)$ and indices $i$ and if there exists a generating basis $\{z_i\}_{i=1}^h$ for the isolator of $\ga_c(N)$ such that for any primitive element $g$ and prime $p$ there exists a surjective group morphism from $N$ to a $p$-group of minimal order that witnesses both $g$ and $z_i$ for some index $i$.  This definition implies that we may associate a natural number invariant to any such primitive element $x$ which takes into account the smallest index $i$ where we may apply the Prime Number Theorem relative to the set $\RP_{N,x,i}$. Moreover, we have that the asymptotic behavior of separating powers of any primitive element from the identity using surjective group morphisms to finite $p$-groups as we vary the prime $p$ is controlled by the asumptotic behavior of separating powers of elements of the given generating basis from the identity using surjective group morphism to finite $p$-groups as we vary the prime $p$. We denote this value as $\dim_{\RFU}(N,x)$, and note that our assumption implies that 
$$
\text{min}\{\dim_{\RFU}(N,z_i) \: | \: 1 \leq i \leq h\} \leq \dim_{\RFU}(N,x) \leq \text{max}\{\dim_{\RFU}(N,z_i) \: | \: 1 \leq i \leq h\}.
$$
By maximizing $\dim_{\RFU}(N,z_i)$ over elements of the generating basis, we obtain the value $\dim_{\RFU}(N)$ which gives a degree for a polynomial in logarithm upper asymptotic bound for $\Farb_N(n)$ for this class of nilpotent groups. As Bou-Rabee's example demonstrates, there exists a class of nilpotent groups who residually finiteness is strictly slower than what is predicted by \cite[Theorem 1.1]{Pengitore_1}, and the motivation in introducing the definition of tame residual dimension is to precisely capture this phenomenom. Thus, we have the following theorem.
	\begin{thm}\label{upper_bounds_residual_function}
		Let $N$ be an infinite, finitely generated nilpotent group. Then
		$$
		\Farb_N(n) \precsim \pr{\log(n)}^{\psi_{\RD}(N)}.
		$$ Now suppose that $N$ has tame residual dimension. Then there exists a natural number $\dim_{\RFU}(N)$ satisfying $\dim_{\RFU}(N) \leq \psi_{\RD}(N)$ such that
		$$
		\Farb_N(n) \precsim \pr{ \log( n)}^{\dim_{\RFU}(N)}.
		$$ If
		$
		\dim_{\RFU}(N) < \psi_{\RD}(N),
		$
		then $\Farb_{N}(n)$ grows strictly slower than what is predicted by \cite[Theorem 1.1]{Pengitore_1}.
	\end{thm}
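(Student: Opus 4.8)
The first inequality is the upper-bound half of \cite[Theorem 1.1]{Pengitore_1}, which is unaffected by the error: the defect in \cite{Pengitore_1} is confined to \cite[Proposition 4.10]{Pengitore_1} (cf.\ Proposition \ref{incorrect_prop}) and feeds only into the lower bound. So for this part I would simply invoke the construction of \cite{Pengitore_1}, which, given a nontrivial $g \in N$ with $\norm{g} \leq n$, produces a surjection onto a finite group of order $\precsim \pr{\log(n)}^{\psi_{\RD}(N)}$ on which $g$ survives.

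Assume now that $N$ has tame residual dimension. First reduce to the torsion-free case: $T(N)$ is finite, elements of $T(N)$ are distinguished by a quotient of bounded order, and a nontrivial $g \notin T(N)$ is distinguished through its image in $N/T(N)$; hence $\Farb_N(n) \approx \Farb_{N/T(N)}(n)$ and we may take $N$ torsion free of step $c$. Next I would use the reduction of \cite{Pengitore_1}: distinguishing a nontrivial $g$ with $\norm{g} \leq n$ reduces to distinguishing a power $x^{m}$ of a primitive element $x$ in the isolator of $\ga_j(N)$ for some $j \leq c$, by a surjection of $N$ onto a finite $p$-group; and since $x \in \ga_j(N)$ forces $\norm{x^m} \asymp m^{1/j}$ uniformly in $x$, one has $m \precsim \norm{g}^{j} \leq n^{j}$, so $\log(m) \precsim j \log(n) \asymp \log(n)$.

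The heart of the matter is the choice of the prime $p$. To distinguish $x^m$ it is enough to find a prime $p$ with $p \nmid m$ together with a surjection of $N$ onto a finite $p$-group $Q$ on which $x$ survives, since then $x^m$ survives on $Q$ as well, the image of $x$ having order a positive power of $p$. By the definition of the sets $\RP_{N,x,i}$, for each prime $p$ (outside a finite exceptional set) the least order of such a $Q$ is $p^{i}$, where $i$ is the unique index with $p \in \RP_{N,x,i}$. The tameness hypothesis gives each $\RP_{N,x,i}$ a natural density; since cofinitely many primes lie in finitely many of these sets, at least one has positive density, and I set $i_0 := \dim_{\RFU}(N,x) \leq \dim_{\RFU}(N)$ to be the least such index and $\delta > 0$ the density of $\RP_{N,x,i_0}$. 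The Prime Number Theorem then furnishes $\gg \delta \, X / \log(X)$ primes of $\RP_{N,x,i_0}$ below $X$, whereas $m$ has only $O(\log(m))$ distinct prime divisors; so for $m$ large one may take $X \asymp \log(m)$ and secure a prime $p \in \RP_{N,x,i_0}$ with $p \leq X$ and $p \nmid m$. The resulting $p$-group quotient $Q$ distinguishes $x^m$ and has order
$$
|Q| = p^{i_0} \precsim \pr{\log(m)}^{i_0} \precsim \pr{\log(n)}^{\dim_{\RFU}(N)}.
$$
Maximizing over the finitely many levels $j$ and, for each $j$, over primitive elements $x$ of the isolator of $\ga_j(N)$ yields $\Farb_N(n) \precsim \pr{\log(n)}^{\dim_{\RFU}(N)}$; the inequality $\dim_{\RFU}(N) \leq \psi_{\RD}(N)$ is read off the definitions of the two invariants.

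The step I expect to be the main obstacle is the arithmetic legitimacy of the count: ``$\RP_{N,x,i}$ has a natural density'' must genuinely license the Prime Number Theorem estimate used above. This is immediate when $\RP_{N,x,i}$ is a finite union of residue classes or a Frobenius set attached to a number field, and I would verify that the explicit description of $\RP_{N,x,i}$ coming from the Mal'cev coordinates of $x$ has that shape. Two subsidiary points need care: the uniformity in $g$ of the reduction to a power of a primitive element and of the estimate $m \precsim \norm{g}^{j}$, which I would extract from the Mal'cev-basis estimates already in \cite{Pengitore_1}; and the bookkeeping that the levels $j < c$ require no quotient larger than $\pr{\log(n)}^{\dim_{\RFU}(N)}$, which I would handle by applying the same analysis to $N/\ga_{j+1}(N)$. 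Finally, the last assertion is formal: if $\dim_{\RFU}(N) \lneq \psi_{\RD}(N)$, then $\pr{\log(n)}^{\psi_{\RD}(N)} \not\precsim \pr{\log(n)}^{\dim_{\RFU}(N)}$, so the bound just proved forces $\Farb_N(n) \not\approx \pr{\log(n)}^{\psi_{\RD}(N)}$, and indeed $\Farb_N(n)$ grows strictly more slowly than the function predicted by \cite[Theorem 1.1]{Pengitore_1}.
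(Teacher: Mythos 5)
The outline is broadly in line with the paper's argument (reduce to the torsion-free case via Proposition \ref{torsion_to_torsion_free}, reduce to detecting powers $z^m$ of a primitive element, use tameness plus the Prime Number Theorem to find a small prime $p$ not dividing $m$ with a $p$-witness of controlled order), and the final formal deduction for the case $\dim_{\RFU}(N) \lneq \psi_{\RD}(N)$ is fine. However, there is a genuine gap in how you handle elements whose image in $N/\sqrt{\ga_c(N)}$ is already nontrivial, i.e.\ the ``levels $j < c$.''

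You say you would ``handle \dots by applying the same analysis to $N/\ga_{j+1}(N)$,'' and you also ``maximize over \dots primitive elements $x$ of the isolator of $\ga_j(N)$.'' Neither move is available as stated. The invariant $\dim_{\RFU}(N,x)$ is only defined for primitive $x \in \sqrt{\ga_c(N)}$, so there is no $\dim_{\RFU}(N,x)$ to maximize for $x$ lying in a deeper isolator; and applying the argument to a quotient $N/M$ with $M = \sqrt{\ga_c(N)}$ yields a bound of the form $\pr{\log n}^{\dim_{\RFU}(N/M)}$, not $\pr{\log n}^{\dim_{\RFU}(N)}$. To close the loop you need the comparison $\dim_{\RFU}(N/M) \leq \dim_{\RFU}(N)$, which is exactly Proposition \ref{residual_dim_lower_step_quotient} of the paper and is proved there by lifting primitives of $\sqrt{\ga_{c-1}(N/M)}$ to $N$ and running a density argument on the sets $\RP$. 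The paper's proof is organized as an induction on step length precisely so that this proposition can be invoked at the inductive step; your proposal omits both the induction and the monotonicity lemma, and without them the bound for elements surviving in $N/M$ is not established. The rest of the argument — the existence of an index with positive density, the $p^{i_0}$ bound on the order of the witness, the reduction via Proposition \ref{p_dimension_basis} — matches the paper's treatment once this is repaired.
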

	We observe that the first statement of the above theorem is the upper bound produced for $\Farb_{N}(n)$ in \cite[Theorem 1.1]{Pengitore_1}, and we claim no originality in including it in this article. We restate this theorem and give its proof to show the similarities and differences between the proof of the original upper bound and the proof of improved upper bound in the case of finitely generated nilpotent groups that have tame residual dimension. In particular, we want to highlight how the geometry of finite $p$-quotients of a finitely generated nilpotent group changes as we vary the prime and how it contrasts with torsion free quotients of the nilpotent group with one dimensional center.

For this last class of nilpotent groups, we are able to apply our methods to completely characterize the asymptotic behavior of $\Farb_{N}(n)$. In addition to assuming that the sets $\RP_{N,x,i}$ have a well defined natural density for all primitive elements $x$ and indices $i$, we assume that these sets $\RP_{N,x,i}$ are finite when $i < \dim_{\RFU}(N,x)$. In particular, we may ignore these sets of primes when calculating an asymptotic lower bound for $\Farb_{N}(n)$. Finitely generated nilpotent groups that satisfy these properties are said to have \textbf{accessible residual dimension}, and as a consequence, we have that $\dim_{\RFL}(N) = \dim_{\RFU}(N)$ whose common value is denoted $\dim_{\ARF}(N)$.
	
	\begin{thm}\label{accessible_result}
		Let $N$ be an infinite, finitely generated nilpotent group such that $N / T(N)$ has step length $c > 1$, and suppose that $N$ has accessible residual dimension. Then there exists a natural number $\dim_{\ARF}(N)$ such that $c + 1 \leq \dim_{\ARF}(N) \leq \psi_{\RD}(N)$ and where
		$$
		\Farb_{N}(n) \approx \pr{\log(n)}^{\dim_{\ARF}(N)}.
		$$
	\end{thm}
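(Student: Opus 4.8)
The plan is to deduce the statement by squeezing $\Farb_N(n)$ between the lower bound of Theorem \ref{lower_bound_rf_result} and the upper bound of Theorem \ref{upper_bounds_residual_function}, once we know that the exponents appearing in those two bounds agree. First note that a finitely generated nilpotent group with accessible residual dimension automatically has tame residual dimension, since accessibility presupposes that every set $\RP_{N,x,i}$ has a natural density. Hence Theorem \ref{upper_bounds_residual_function} applies and yields $\Farb_N(n) \precsim \pr{\log(n)}^{\dim_{\RFU}(N)}$ with $\dim_{\RFU}(N) \leq \psi_{\RD}(N)$, while Theorem \ref{lower_bound_rf_result} gives $\pr{\log(n)}^{\dim_{\RFL}(N)} \precsim \Farb_N(n)$ with $\dim_{\RFL}(N) \geq c+1$. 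Thus it suffices to prove that $\dim_{\RFL}(N) = \dim_{\RFU}(N)$ and to set $\dim_{\ARF}(N)$ equal to this common value.

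Next I would establish this equality one primitive element at a time. Fix a primitive element $x$ in the isolator of $\ga_c(N)$ and put $d = \dim_{\RFU}(N,x)$. Since the relevant indices $i$ range over a finite set and the sets $\RP_{N,x,i}$ partition the set of primes (the minimal finite $p$-quotient witnessing $x$ having a well defined order for each $p$), additivity of natural density forces at least one $\RP_{N,x,i}$ to have positive density; so $d$ is well defined and $\RP_{N,x,d}$ is infinite. By the accessibility hypothesis, $\RP_{N,x,i}$ is finite for every $i < d$. Therefore $d$ is exactly the least index $i$ with $|\RP_{N,x,i}| = \infty$, which is the definition of $\dim_{\RFL}(N,x)$; hence $\dim_{\RFL}(N,x) = \dim_{\RFU}(N,x)$ for every such $x$. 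Taking the maximum over all primitive elements of the isolator of $\ga_c(N)$ gives $\dim_{\RFL}(N) = \dim_{\RFU}(N)$, and since $\dim_{\RFL}(N) \in \N$ by Theorem \ref{lower_bound_rf_result} we may define $\dim_{\ARF}(N) \bdef \dim_{\RFL}(N) = \dim_{\RFU}(N)$.

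It remains to collect the conclusions. Combining the two asymptotic inequalities with $\dim_{\RFL}(N) = \dim_{\RFU}(N) = \dim_{\ARF}(N)$ gives $\pr{\log(n)}^{\dim_{\ARF}(N)} \precsim \Farb_N(n) \precsim \pr{\log(n)}^{\dim_{\ARF}(N)}$, that is, $\Farb_N(n) \approx \pr{\log(n)}^{\dim_{\ARF}(N)}$. The bound $\dim_{\ARF}(N) = \dim_{\RFL}(N) \geq c+1$ is contained in Theorem \ref{lower_bound_rf_result}, and $\dim_{\ARF}(N) = \dim_{\RFU}(N) \leq \psi_{\RD}(N)$ in Theorem \ref{upper_bounds_residual_function}.

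The substantive work sits in Theorems \ref{lower_bound_rf_result} and \ref{upper_bounds_residual_function}, which we are entitled to assume; the delicate point specific to the present argument is to make sure that the sets $\RP_{N,x,i}$ are governed by the same notion of ``minimal finite $p$-quotient witnessing $x$'' in both the lower- and upper-bound constructions, and that the reduction to the torsion-free case used in proving the lower bound is compatible with the accessibility hypothesis imposed on $N$ itself, so that the per-element identity $\dim_{\RFL}(N,x) = \dim_{\RFU}(N,x)$ really is a statement about the same family of primitive elements on both sides. Once this compatibility is checked, the remainder is the bookkeeping above.
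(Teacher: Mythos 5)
Your proof is correct and follows essentially the same route as the paper's: squeeze $\Farb_N(n)$ between Theorem \ref{lower_bound_rf_result} and Theorem \ref{upper_bounds_residual_function}, then identify the two exponents. The only addition is that you actually justify the equality $\dim_{\RFL}(N)=\dim_{\RFU}(N)$ under accessibility (via the partition of $\mathbb{P}$ into the sets $\RP_{N,x,i}$ and additivity of natural density), whereas the paper simply declares this ``straightforward to see'' in the definition of $\dim_{\ARF}$; the paper also routes explicitly through $N/T(N)$ and Proposition \ref{torsion_to_torsion_free}, while you invoke the two theorems directly on $N$, which is equivalent since both are already stated for groups with torsion.
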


\subsection{Previous bounds found in the literature}
To the author's knowledge, the best previously known asymptotic lower bound for $\Farb_N(n)$ for the class of infinite, finitely generated nilpotent groups was $\log(n)$ which was given by \cite{Bou_Rabee10}.  This lower bound does not use any of the structure of the nilpotent group other than it containing an infinite order element, and subsequently, the lower bound is not connected to the geometry of the nilpotent group in consideration. On the other hand, the lower bounds produced in this article reflect the step length of the nilpotent group and take into account the geometry of the pro-$p$ completion as the prime $p$ varies. In particular, this asymptotic lower bound represents a signicant improvement in the understanding of $\Farb_N(n)$.

The first asymptotic upper bound for $\Farb_N(n)$ in the literature was $(\log(n))^{h(N)}$ and was given by \cite[Theorem 0.2]{Bou_Rabee10}. This upper bound partially reflects the structure of $N$ in that the $h(N)$ gives the dimension of the real Mal'tsev completion of $N$. However, the bound $(\log(n))^{\psi_{\text{RF}}(N)}$ reflects the geometry of the different quotients of the real Mal'tsev completion which has one dimensional center in which a given primitive central element of $N$ does not vanish. In particular, the original bound given in \cite{Bou_Rabee10} can be improved as soon as the center of the Mal'tsev completion has dimension greater than $1$ which can be seen in the above theorem.

When the nilpotent group has tame residual dimension, the upper bounds can be improved even further. This improved bound reflects the number theoretic properties of these nilpotent groups in relation to how finite $p$-quotients behave as the prime $p$ varies. In particular, these finite $p$-quotients are related to the distribution of primes $p$ for which a polynomial $F(T)$ with integer coefficients has a solution mod $p$ as we will see in Bou-Rabee's example. As a consequence, the upper bound for residual finiteness for nilpotent groups with tame residual dimension reflects more strongly how geometry of the pro-$p$ completion of the given nilpotent group varies with the prime $p$ and how their geometry differs from the Mal'tsev completion than what is seen in the literature or in the first asymptotic upper bound given in this article.

\subsection{Partial recovery of the asymptotic behavior of $\Farb_N(n)$}
It is important to note that we are unable to fully recover the asymptotic behavior of $\Farb_{N}(n)$ for a general finitely generated nilpotent group $N$; however, if we could show that the sets $\RP_{N,x,i}$ have positive natural density or are finite, we would be able to fully characterize $\Farb_{N}(n)$. For reasons related to the structure of nilpotent linear algebraic groups defined over $\Q$, we believe this to be the case and go into detail in the next few paragraphs.

Up to passing to a finite index subgroup, every torsion free, finitely generated nilpotent group $N$ arises as the group of integral points of a connected linear algebraic group $\textbf{N}$ defined over $\Q$ where the image of $N$ under the inverse of the exponential map provides a $\Z$-structure for the Lie algebra $\Fr{n}$ of $\textbf{N}$. Since $\textbf{N}$ satisfies strong approximation, we have that the smooth mod $p$ reduction of $\textbf{N}$, denoted $\textbf{N}_p$,  is well defined for all but finitely many primes $p$, and for such primes $p$, we have that the image of $N$ under the natural reduction group morphism is isomorphic to $\textbf{N}_p(\mathbb{F}_p)$ where $\textbf{N}_p(\mathbb{F}_p)$ is the group of $\mathbb{F}_p$-points of $\textbf{N}_p$. In particular, the Lie algebra of $\textbf{N}_p$, denoted as $\Fr{n}_p$ is given by the mod $p$ reduction of the $\Z$-structure on $\Fr{n}$. Thus, if there exists a normal connected algebraic subgroup $\textbf{H}_p \nsub \textbf{N}_p$ such that $(\textbf{N}_p / \textbf{H}_p)(\mathbb{F}_p)$ witnesses $x$ and where $\dim_{\mathbb{F}_p}(\textbf{N}_p/ \textbf{H}_p) = i$ for all primes $p \in \RP_{N,x,i}$, we may reduce the study of separating powers of $x$ to the study of quotients of $\Fr{n}_p$ in which a nonzero vector $X \in \Fr{n}$ does not vanish as we vary the prime $p$. At this point, we would use Lie theoretic methods and basic algebraic geometry as a way to study Lie quotients of $\Fr{n}_p$ for different primes $p$. This perspective gives more traction to study the sets $\RP_{N,x,i}$ and provides intuitive justification for the sets $\RP_{N,x,i}$ having a natural density.

For $p \in \RP_{N,x,i}$ where $i <\dim_{\RFU}(N,x)$, we have that any finite $p$-group $P$ that witnesses $x$ and where $|P| = p^i$ may have step length less than that of $N$. These primes would correspond to  finite quotients of the $\Z$-structure on $\Fr{n}$ which lose the structure of $N$. The idea is that the structure constants of the $\Z$-structure may vanish mod $p$; however, since there are finitely many structure constants, we would have that the Lie algebra $\Fr{n}_p$ would receive the full Lie bracket structure of $\Fr{n}$ for all but finitely many primes $p$. That should imply that the set $\RP_{N,x,i}$ is finite when $i < \dim_{\RFU}(N,x)$ for any primitive element $x$.

	\subsection{Plan of paper}
	In \S \ref{background}, we introduce necessary background and conventions for this paper.	\S \ref{motivation} expounds on the example provided to by Khalid Bou-Rabee. \S \ref{residual_set_up} introduces and defines the constants $\dim_{\RFL}(N),$ $ \dim_{\RFU}(N)$, and $\dim_{\ARF}(N)$ for infinite, finitely generated nilpotent groups. \S \ref{lower_bound_section}, \S  \ref{upper_bounds_section}, and \S\ref{accessible_section} provide proofs of Theorem \ref{lower_bound_rf_result}, Theorem \ref{upper_bounds_residual_function}, and Theorem \ref{accessible_result}, respectively.  \S \ref{separability_heisenberg} finishes with the computation of $\Farb_{N}(n)$ when $N$ is a Filiform nilpotent group.
	
	\subsection{Acknowledgments}
	I want to thank for my advisor Ben McReynolds for all of his help and guidance. I would like to thank Khalid Bou-Rabee for making me aware of his counterexample to \cite[Theorem 1.1]{Pengitore_1}. Finally, I would like to thank Daniel Groves for discussions regarding finite $p$-groups.
	\section{Background}\label{background}
	
	\subsection{Notation and conventions}
	We let $\lcm\set{r_1,\cdots,r_m}$ be the lowest common multiple of $\set{r_1,\cdots,r_m} \subseteq \Z$ with the convention that $\lcm(a) = |a|$ and $\lcm(a,0) = 0$. We let $\gcd(r_1,r_2)$ be the great common divisor of the integers $r_1$ and $r_2$ with the convention that $\gcd(r,0) = |r|$. We write $\mathbb{N}$ to be the set of natural numbers excluding $0$ and denote $\mathbb{P}$ as the set of prime numbers.
	
	We denote $\|g\|_S$ to be the word length of $g$ in $G$ with respect to the finite generating subset $S$, and when the subset $S$ is clear from context, we will write $\|g\|$. We denote the identity of $G$ as $1$ and denote $\set{1}$ as the trivial group.  We let $\text{Ord}_G(x)$ be the order of $x$ as an element of $G$ and denote the cardinality of a finite group $G$ as $|G|$. For a normal subgroup $H \nsub G$, we set $\map{\pi_{H}}{G}{G/ H}$ to be the natural projection and write $\bar{x} = \pi_{H}(x)$ when $H$ is clear from context. When given a nonempty subset $X \subset G$, we denote $\innp{X}$ as the subgroup generated by the elements of $X$. We denote $[x,y] = x^{-1} \: y^{-1} \: x \:y$. For $x_1,\cdots,x_k \in G$, we define $[x_1,\cdots,x_k] = [x_1,[x_2,\cdots,x_k]]$ where $[x_2,\cdots,x_k]$ is inductively defined. For nonempty subsets $A,B \subset G$, we define $[A,B]$ as the subgroup generated by the subset
	$
	\set{[a,b] \: | \: a \in A, b \in B}.
	$
	
	We denote the center of $G$ as $Z(G)$ and denote $\ga_i(G)$ as the $i$-th term of the lower central series of $G$. For a finitely generated nilpotent group $N$, we denote $h(N)$ as its Hirsch length and $T(N)$ as the subgroup generated by finite order elements. For natural numbers $m$, we let $G^m = \innp{g^m \: | \: g \in G}$ and write $\pi_m$ for the natural projection $\map{\pi_m}{G}{G / G^m}$.
	\subsection{Residual finiteness}
	Following \cite{Bou_Rabee10}, we define the depth function $\map{\D_{G}}{G \backslash \{1\}}{\N \cup \set{\infty}}$ of the finitely generated group $G$ as
	$$
	\D_{G}(g) = \text{min} \set{|Q| \: | \: \map{\varphi}{G}{Q}, \: |Q| < \infty, \text{ and } \varphi(g) \neq 1}
	$$
	with the understanding that $\D_{G}(g) = \infty$ if no such finite group exists. 
	\begin{defn}
		Let $G$ be a finitely generated group. We say that $G$ is \textbf{residually finite} if $\D_{G}(g) < \infty$ for all $g \in G \backslash \{1\}$.
	\end{defn}
	
	For a residually finite, finitely generated group $G$ with finite generating subset $S$, we define the associated complexity function $\map{\Farb_{G,S}}{\N}{\N}$ as
	$$
	\Farb_{G,S}(n) = \text{max}\set{\D_{G}(g) \: | \: g \in G \backslash \{1\} \: \text{ and } \|g\|_S \leq n}.
	$$
	For any residually finite, finitely generated group $G$ with finite generating subsets $S_1$ and $S_2$, we have that
	$
	\Farb_{G,S_1}(n) \approx \Farb_{G,S_2}(n)
	$
	(see \cite[Lemma 1.1]{Bou_Rabee10}). Henceforth, we will suppress the choice of finite generating subset.

	\subsection{Nilpotent groups} For more details of the following discussion, see \cite{Hall_notes,Segal_book_polycyclic}. 	We define $\ga_1(G) = G$ and inductively define $\ga_i(G) = [\ga_{i-1}(G),G]$. We call the subgroup $\ga_i(G)$ the \textbf{$i$-th term of the lower central series of $G$.}
	\begin{defn}
		Let $G$ be a finitely generated group. We say that $G$ is a \textbf{nilpotent group} if $\ga_k(G) = \set{1}$ for some natural number $k$. We say that $G$ is a \textbf{nilpotent group of step length $c$} if $c$ is the smallest natural number such that $\ga_{c+1}(G) = \set{1}$. If the step size is unspecified, we simply say that $G$ is a nilpotent group.
	\end{defn}

For a subgroup $H \leq N$ of a nilpotent group, we define the \textbf{isolator of $H$ in $N$} as
$$
\sqrt[N]{H} = \{ g \in N \:  \: | \exists \: \: k \in \N \:  \text{ such that }  \: g^k \in H \}.
$$ 
$\sqrt[N]{H}$ is a subgroup of $N$ for all $H \leq N$ when $N$ is a torsion free, finitely generated nilpotent group. Additionally, if $H \nsub N$, then $\sqrt[N]{H} \nsub N$. As a natural consequence, $N / \sqrt[N]{H}$ is torsion free. When the group $N$ is clear from context, we will simply write $\sqrt{H}$.

The \textbf{torsion subgroup} of a finitely generated nilpotent group $N$  is defined as
$$
T(N) = \set{g \in N \: | \: \text{Ord}_N(g) < \infty}.
$$
It is well known that $T(N)$ is a finite characteristic subgroup of $N$. Moreover, if $N$ is an infinite, finitely generated nilpotent group, then $N  / T(N)$ is a torsion free, finitely generated nilpotent group.

When given a torsion free, finitely generated nilpotent group, we may  refine the series $\{\sqrt{\ga_i(N)}\}_{i=1}^{c}$ to obtain a normal series $\set{H_i}_{i=1}^{h(N)}$ where $H_i / H_{i - 1} \cong \Z$ for all $i$. The number of terms in this series is known as the \textbf{Hirsch length} of $N$ and is denoted as $h(N)$. In particular, the Hirsch length may be computed as $$h(N) = \sum_{i=1}^{c}\text{rank}_\Z(\ga_i(N)/\ga_{i+1}(N)).$$ We choose $x_1 \in N$ such that $H_1 \cong \innp{x_1}$, and for each $2 \leq i \leq h(N)$, we choose $x_i \in H_i$ such that $H_i/ H_{i-1} \cong \innp{\pi_{H_{i-1}}(x_i)}$. Any generating subset chosen in this way will be called a \textbf{Mal'tsev basis}. Via \cite[Lemma 8.3]{Eick_Holt_obrien}, we may uniquely represent each element of $N$ with respect to this generating subset as
$
g = \prod_{i=1}^{h(N)} x_i^{\al_i}
$
where $\al_i \in \Z$. The values $\set{\al_i}_{i=1}^{h(N)}$ are referred to as the \textbf{Mal'tsev coordinates of $g$.} Whenever we reference a Mal'tsev basis, we suppress reference to the series $\set{H_i}_{i=1}^{h(N)}$.

	The following proposition and its proof can be originally found in \cite[Lemma 3.8]{Pengitore_1}. It relates the Mal'tsev coordinates of an element $g$ to its word length with respect to the generating subset given by the Mal'tsev basis.
\begin{prop}\label{coord_bound}
	Let $N$ be a torsion free, finitely generated nilpotent group of step length $c$ with a Mal'tsev basis $\set{x_i}_{i=1}^{h(N)}$. Suppose that $g = \prod_{i=1}^{h(N)}x_i^{\al_i}$ is a nontrivial element where  $\|g\| \leq n$. For each $1 \leq i \leq c$, we define $M_i = \sqrt{\ga_i(N)}$, and for each $1\leq i \leq h(N)$, we define $t_i$ as the minimal natural number where $\pi_{M_{t_i}}(x_i) = 1$ and $\pi_{M_{t_i+1}}(x_i) \neq 1$. Then $|\al_i| \leq C \: n^{t_i}$ for all $i$ where $C > 0$ is some constant.
\end{prop}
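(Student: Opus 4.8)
The plan is to induct on the step length $c$ of $N$, or more precisely to argue term-by-term along the lower central series, passing to successive quotients. The key geometric input is that word length behaves well under the projections $\pi_{M_j} \colon N \to N/M_j$: if $\|g\|_S \le n$ in $N$, then $\|\pi_{M_j}(g)\| \le n$ in $N/M_j$ with respect to the image generating set. So the strategy is to control the coordinates $\al_i$ with $t_i = j$ (that is, the generators $x_i$ that first become nontrivial precisely in the layer $M_j / M_{j+1}$, equivalently that lie in $\sqrt{\ga_j(N)}$ but project nontrivially to $\sqrt{\ga_j(N)}/\sqrt{\ga_{j+1}(N)}$) by analyzing the image of $g$ in the quotient $N/M_{j+1}$, where these generators land in the center-like layer $M_j/M_{j+1} \cong \Z^{r_j}$.

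First I would set up the base case $j=1$: the generators $x_i$ with $t_i = 1$ project to a basis of the free abelian group $N/M_2 = N/\sqrt{\ga_2(N)} \cong \Z^{r_1}$. Since a word of length $\le n$ in the generators maps to a sum of $\le n$ unit vectors (and their inverses) in $\Z^{r_1}$, each coordinate of $\pi_{M_2}(g)$ has absolute value $\le n$; hence $|\al_i| \le C n = C n^{t_i}$ for a constant $C$ absorbing the change-of-basis between the $\pi_{M_2}(x_i)$ and a $\Z$-basis of $N/M_2$. For the inductive step, suppose the bound $|\al_i| \le C n^{t_i}$ has been established for all $i$ with $t_i \le j-1$. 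Consider $\pi_{M_{j+1}}(g)$ in $\bar N := N/M_{j+1}$, a nilpotent group of step $j$. The generators $x_i$ with $t_i = j$ map into the central layer $M_j / M_{j+1}$ of $\bar N$, which is free abelian. Using the commutator identities, $\pi_{M_{j+1}}(g)$ can be rewritten so that its $M_j/M_{j+1}$-component is $\prod_{t_i = j} \bar x_i^{\al_i}$ times a product of iterated commutators of weight $j$ in the generators $x_i$ with $t_i < j$ — and here is where the earlier coordinate bounds feed in: each such commutator $[x_{i_1}, \ldots, x_{i_j}]$ appears with an exponent that is polynomial (of degree summing to $j$ in the relevant $t$-weights, hence $\le n^j$) in the already-controlled coordinates, because collecting a length-$n$ word into normal form in a step-$j$ group produces exponents in the $k$-th layer bounded by $O(n^k)$. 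Comparing the $M_j/M_{j+1}$-coordinates on both sides and using that $\|\pi_{M_{j+1}}(g)\| \le n$, I get $|\al_i| \le C' n^j = C' n^{t_i}$ for the new generators.

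The cleanest way to organize the commutator bookkeeping is probably to invoke the standard fact (e.g.\ from the collection process, as in Hall's notes cited in the paper) that in a torsion free finitely generated nilpotent group with a Mal'tsev basis adapted to the lower central series, an element of word length $\le n$ has all Mal'tsev coordinates in the $k$-th "weight layer" bounded by $C n^k$ — but since the proposition is essentially asserting a refined version of exactly that, I would instead prove it directly by the layer-by-layer induction above, making the polynomial dependence explicit rather than citing it. The main obstacle is the inductive step's algebra: controlling precisely how the lower-weight coordinates $\al_i$ (with $t_i < j$) enter the weight-$j$ layer of the normal form of $g$, i.e.\ bounding the exponents of the weight-$j$ basic commutators in terms of $n$. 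This requires either a careful collection-process estimate or an appeal to the polynomiality of the group multiplication in Mal'tsev coordinates (the Hall–Petresco formula), together with the inductive bounds $|\al_i| \le C n^{t_i}$ to see that a product of such commutators of total weight $j$ contributes at most $O(n^{t_{i_1} + \cdots}) = O(n^j)$. Once that estimate is in hand, matching coordinates in the central layer and absorbing all multiplicative constants into a single $C$ finishes the proof.
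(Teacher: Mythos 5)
Your broad strategy --- induct along the lower central series, pass to successive quotients $N/M_{j+1}$, and control the weight-$j$ coordinates there --- matches the spirit of the paper's argument (the paper inducts on step length, which amounts to peeling off the bottom layer one stage at a time). The essential difference is the technical tool used to bound the bottom-layer coordinates, and it is precisely there that your proposal has an acknowledged gap. The paper does not attempt to track the collection process: after applying the inductive hypothesis in $N/M_c$ to get $|\al_i| \le C\,n^{t_i}$ for all $i$ with $t_i < c$, it invokes Gromov's polynomial distortion result \cite[3.B2]{Gromov} --- that $\|x_i^m\| \approx |m|^{1/t_i}$ when $x_i$ has weight $t_i$ --- as a black box, once to show that the portion of $g$ carried by generators of weight $< c$ has word length $\precsim n$ (so $g$ may be replaced by an element of $M_c$ of comparable length), and once more, after isolating each weight-$c$ generator $x_i$ by passing to $N/K_i$ (killing the other weight-$c$ generators, which is legal since $M_c$ is central), to convert $\|x_i^{\al_i}\| \precsim n$ into $|\al_i| \precsim n^c$.

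Your route instead asks for the collection estimate itself --- that collecting a length-$n$ word into normal form yields weight-$k$ exponents $O(n^k)$ --- and you correctly observe that this is essentially the content of the proposition, then declare the inductive step's ``commutator bookkeeping'' the main obstacle without resolving it. That estimate is exactly the hard part, not a routine detail, so this is a genuine gap. One further wrinkle: the exponents of the weight-$j$ basic commutators produced by collection are governed by the combinatorics of the word $w$ representing $g$ (roughly, by the at most $\binom{n}{j}$ ordered $j$-tuples of letters in $w$ that can give rise to a weight-$j$ commutator), not by the normal-form coordinates $\al_i$ with $t_i < j$; those $\al_i$ are outputs of collection, not inputs, so ``polynomial in the already-controlled coordinates'' is not quite the statement you need. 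If you want a self-contained argument you must prove the collection bound as a standalone lemma; otherwise, citing Gromov's distortion result as the paper does closes the gap cleanly.
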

\begin{proof}
	We proceed by induction on step length, and since the base case of abelian groups is evident, we may assume that $N$ has step length $c > 1$. We observe that the image of the set $\set{x_i}_{i=h(M_c) + 1 }^{h(N)}$ in $N /M_c$ is a Mal'tsev basis for $N / M_c$ and that $\pi_{M_c}(M_t) = \sqrt{\ga_{t}(N/M_c)}$. Therefore, the inductive hypothesis implies that there exists a constant $C_1 > 0$ such that if $h(M_c) + 1 \leq i \leq h(N)$, then $|\al_i| \leq C_1 \: n^{t_i}$. For each $1 \leq i \leq h(N)$, there exists a minimal natural number $\ell_i$ such that $x_i^{\ell_i} \in \ga_{t_i}(N)$. In particular, we may write $\al_i = s_i \ell_i + r_i$ where $0 \leq r_i < \ell_i$. We let $D = \text{max}\set{\ell_i \: | \: 1 \leq i \leq h(N)}$.
	
	To proceed, we will demonstrate that we may assume that $g \in M_c$. We have that
	$$
	|s_i \: \ell_i| \leq |s_i \: \ell_i + r_i - r_i| \leq |\al_i| + |r_i| \leq C_1 \: n^{t_i} + D \leq C_2 \: n^{t_i}
	$$
	for $h(M_c) + 1 \leq i \leq h(N)$ where $C_2 > 0$ is some constant. Thus, $|s_i| \leq C_2 \: n^{t_i}$ for all $h(M_c) + 1 \leq i \leq h(N)$. By \cite[3.B2]{Gromov}, we have that $\|x_i^{s_i \: \ell_i}\| \approx |s_i|^{1/t_i}$. Therefore, there exists a constant $C_{3,i} > 0$ such that $\|x_i^{s_i \: \ell_i}\| \leq C_{3,i} \: |s_i|^{1/t_i}$. In particular, we have that 
	$$
	\|x_i^{\al_i}\| = \|x_i^{s_i \: \ell_i + r_i}\| \leq \|x_i^{s_i \: \ell_i}\| + \|x_i^{r_i}\| \leq C_{3,i} |s_i|^{1/t_i} + D\leq D \: C_2^{1/t_i} \: C_{3,i} \: n.
	$$
	Hence, by setting $C_4 = \text{max}\{D \: C_2^{1/t_i} \: C_{3,i} \: | \: h(H_c) + 1 \leq i \leq h(N)\}$, we may write $\|x_i^{\al_i}\| \leq C_4 \: n$ for $h(M_c) + 1 \leq i \leq h(N)$. Letting $h = \prod_{i = h(M_c) + 1}^{h(N)} x_i^{\al_i}$, one can see that $g \: h^{-1} \in M_c$ and that 
$$\|g \: h^{-1}\| \leq \|g\| + \|h\| \leq \|g\| + \sum_{i=h(M_c)+1}^{h(N)} \|x_i^{\al_i}\| \leq \|g\| + h(N) \:C_4 \: n \leq C_5 \: n$$
	for some constant $C_5 > 0$ which gives our claim.
	
	By passing to the quotient $N/K_i$ where $K_i = \innp{x_j}_{j=1,j \neq i}^{h(H_c)}$, it is straightforward to see that $\|x_i^{\al_i}\| \leq C_5 \: n$ for each $1 \leq i \leq h(M_c)$. Using the same arguments as above, \cite[3.B2]{Gromov} implies that $|s_{i}|\leq C_{6,i} \: n^c$ for some constant $C_{6,i} > 0$ for each $1 \leq i \leq h(M_c)$. Thus, we may write
	$$
	|\al_{i}| = |s_{i} \ell_{i} + r_{i}| = |s_{i}||\ell_{i}| + |r_{i}| \leq D + D \: C_{6,i} \: n^c \leq (D+1) \: C_{6,i} \: n^c.
	$$
	Letting $C_7 = \text{max}\set{C_1,C_{6,1},\cdots,C_{6,h(H_c)}}$, we have by construction that $|\al_i| \leq C_7 \: n^{t_i}$ for all $i$.
\end{proof}

	\subsection{Density}
For $A \subset \mathbb{P}$, we define the \textbf{natural density of $A$ in $\mathbb{P}$} as 
$$
\delta(A) = \lim_{n \rightarrow \infty} \frac{|A \cap \set{1,\cdots,n}|}{|\mathbb{P}\cap \set{1,\cdots,n}|}
$$
when the limit exists. When this limit exists for a set $A$, we say that $A$ has a natural density.

\section{A counterexample to \cite[Proposition 4.10]{Pengitore_1} and \cite[Theorem 1.1]{Pengitore_1}}\label{motivation}
The following example was communicated to us by Khalid Bou-Rabee. Let $G$ be the torsion free, finitely generated nilpotent group given by
$$
G = \set{x,y,w,z,u,v \: | \: [x,y] =  [w,z] = 1, [x,w] = [y,z] = u, [x,z] = v, [y,w] = v^{-1}, u \text{ and } v \text{ are central}}.
$$

We start by listing some basic facts about $G$. The set $S = \set{x,y,w,z,u,v}$ is a Mal'tsev basis for $G$ from which it follows that $h(G) = 6$. Additionally, we have that the abelianization of $G$ is given by $G / \ga_2(G) \cong \set{\bar{x},\bar{y},\bar{w},\bar{z}}$ and that the center is given by $Z(G) \cong \innp{u,v}$. Finally, we observe that $G$ has step length $2$ and that $\ga_2(G) = Z(G)$.

The main tool used in \cite{Pengitore_1} is the following proposition.
We first introduce a definition.

\begin{defn} Let $N$ be a torsion free, finitely generated nilpotent group of step length $c$ with a Mal'tsev basis $\set{x_i}_{i=1}^{h(N)}$. For a vector $\vec{a} = \pr{a_i}_{i=1}^{\ell}$ where $a_i$ is a natural number such that $1 \leq a_i \leq h(N)$ for all $i$, we denote $[x_{\vec{a}}] = [x_{a_1}, \cdots, x_{a_{\ell}}]$.  We call $[x_{\vec{a}}]$ a simple commutator of weight $\ell$ with respect to $\vec{a}$, and since $N$ is a nilpotent group, we have that all simple commutators of weight greater than $c$ are trivial. We denote $W_k(N,\set{x_i})$ to be the set of nontrivial simple commutators in $\set{x_i}$ of weight $k$, and let $W(N,\set{x_i})$ to be the set of nontrivial commutators.
	
	We may write $[x_{\vec{a}}] = \prod_{i=1}^{h(N)}x_i^{\delta_{\vec{a},i}}.$
	Let
	$$
	\text{B}(N,\set{x_i}) = \lcm \set{|\delta_{\vec{a},i} | \: | 1 \leq i \leq h(N), \delta_{\vec{a},i} \neq 0, \text{ and } [x_{\vec{a}}] \in W(N, \set{x_i})}.
	$$
\end{defn}

See \cite[Proposition 4.10]{Pengitore_1} for where the following proposition is originally found.
\begin{prop}\label{incorrect_prop}
	Let $N$ be a torsion free, finitely generated nilpotent group with a Mal'tsev basis $\set{x_i}_{i=1}^{h(N)}$. Let $\map{\varphi}{N}{Q}$ be a surjective group morphism to a finite $p$-group where $p > \text{B}(N,\set{x_i})$. Suppose that $\varphi([x_{\vec{a}}]) \neq 1$ for all $[x_{\vec{a}}] \in W(N,\set{x_i}) \cap Z(N)$. Also, suppose that $\varphi(x_i) \neq 1$ for $x_i \in Z(N)$ and $\varphi(x_i) \neq \varphi(x_j)$ for all $x_i,x_j \in Z(N)$ where $i \neq j$. Then $\varphi(x_t) \neq 1$ for $1 \leq t \leq h(N)$ and $\varphi(x_i) \neq \varphi(x_j)$ for $1 \leq i < j \leq h(N)$. Finally, $|Q| \geq p^{h(N)}$.
\end{prop}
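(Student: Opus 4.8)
The plan is to produce, from the images $\varphi(x_1),\dots,\varphi(x_{h(N)})$, a strictly increasing chain of subgroups of $Q$ of length $h(N)$. Since $Q$ is a finite $p$-group and every strict inclusion of finite subgroups of a $p$-group has index at least $p$, such a chain forces $|Q|\geq p^{h(N)}$, and it also yields the two non-collapsing assertions at no extra cost. Let $\set{1}=H_0\subset H_1\subset\cdots\subset H_{h(N)}=N$ be the normal series attached to the Mal'tsev basis, so $H_i=\innp{H_{i-1},x_i}$ with $H_i/H_{i-1}\cong\Z$. It suffices to prove that $\varphi(x_i)\notin\varphi(H_{i-1})$ for every $i$, for then $\varphi(H_{i-1})\subsetneq\varphi(H_i)$ for all $i$; since $1\in\varphi(H_{i-1})$ this forces $\varphi(x_i)\neq1$, and for $i<j$ we have $\varphi(x_i)\in\varphi(H_i)\subseteq\varphi(H_{j-1})$ while $\varphi(x_j)\notin\varphi(H_{j-1})$, so $\varphi(x_i)\neq\varphi(x_j)$. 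Thus all three conclusions reduce to the single claim that no $\varphi(x_i)$ lies in $\varphi(H_{i-1})$.

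Suppose, for contradiction, that $\varphi(x_i)\in\varphi(H_{i-1})$ for some $i$, and pick $i$ minimal. Choosing $h\in H_{i-1}$ with $\varphi(h)=\varphi(x_i)$, the element $g:=x_i h^{-1}$ is a nontrivial element of $K:=\ker\varphi$ (nontrivial since $x_i\notin H_{i-1}$). The strategy is to manufacture from $g$ a central element of $N$ lying in $K$ that one of the hypotheses forbids. Writing $c$ for the step length of $N$, let $w$ be maximal with $g\in\sqrt{\ga_w(N)}$. If $w\geq c$, then $g$ is already a nontrivial central element of $K$; otherwise, for suitable Mal'tsev generators $x_{j_1},\dots,x_{j_{c-w}}$ form the iterated commutator $[g,x_{j_1},\dots,x_{j_{c-w}}]$ (or a suitable power of it), which lies in $\sqrt{\ga_c(N)}\subseteq Z(N)$ and, being an iterated commutator of the element $g$ of the normal subgroup $K$ with elements of $N$, also lies in $K$. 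One then expands this commutator in Mal'tsev coordinates: using the commutator calculus together with $p>\text{B}(N,\set{x_i})$ — which ensures that the nonzero Mal'tsev coordinates appearing in simple commutators of the $x_i$ are units modulo $p$ and so are not annihilated on passage to the $p$-group $Q$ — one arranges that an appropriate such commutator equals, modulo ``deeper'' terms, a central simple commutator $[x_{\vec a}]\in W(N,\set{x_i})\cap Z(N)$ times a power of a central basis element $x_t$, with the leading contribution carrying an exponent prime to $p$. Comparing with the hypotheses ($\varphi$ nontrivial on central simple commutators, and $\varphi(x_t)\neq1$, $\varphi(x_s)\neq\varphi(x_t)$ on central basis elements) then contradicts the membership of this element in $K$.

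The delicate point — and the step I expect to be the main obstacle — is precisely this last one: distilling from an arbitrary nontrivial $g\in K$ a central element whose $\varphi$-image is genuinely controlled by the rather weak hypotheses in force. The difficulty is that in a $p$-group ``$\varphi(x_t)\neq1$'' is far weaker than ``$\varphi$ injective on $Z(N)$'', since $\varphi(x_t^{\,p})$ may well be trivial; hence the central element produced must be shown to involve a contribution whose exponent is prime to $p$, and it is exactly here that the hypothesis $p>\text{B}(N,\set{x_i})$ and a careful accounting of the Mal'tsev coordinates of iterated commutators of the $x_i$ are needed. Everything else — the reduction to the strictly increasing chain $\varphi(H_0)\subsetneq\cdots\subsetneq\varphi(H_{h(N)})=Q$, and the deduction of $\varphi(x_t)\neq1$ for all $t$, of $\varphi(x_i)\neq\varphi(x_j)$ for $i\neq j$, and of $|Q|\geq p^{h(N)}$ from it — is formal.
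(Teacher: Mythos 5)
This statement is in fact \emph{false}, and the paper you were blind to is a correction paper whose entire Section~\ref{motivation} is devoted to exhibiting an explicit counterexample to it: the proposition is literally labeled ``\texttt{incorrect\_prop}'' in the source, and the introduction says ``it turns out that \cite[Proposition 4.10]{Pengitore_1} is incorrect.'' The group
$$
G = \set{x,y,w,z,u,v \: | \: [x,y]=[w,z]=1,\ [x,w]=[y,z]=u,\ [x,z]=v,\ [y,w]=v^{-1},\ u,v \text{ central}}
$$
has $h(G)=6$, $Z(G)=\innp{u,v}$, $\mathrm{B}(G,S)=1$, and $W(G,S)\cap Z(G)=\set{u,v,u^{-1}}$. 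For each prime $p\equiv 1\pmod 4$, taking $a_p$ a root of $T^2+1\pmod p$ and $A_p$ the normal closure of $\innp{x^{a_p}y}$, the quotient $\varphi\colon G\to G/(G^p\cdot A_p)$ is a finite $p$-group of order $p^4$ (not $p^6$) in which $u$, $v$ both survive and remain distinct. So all hypotheses of the proposition hold but $|Q|=p^4 < p^{h(N)}$.

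Your proof attempt is therefore doomed, but you should be congratulated for correctly isolating, and explicitly flagging as ``the delicate point,'' exactly the step that cannot be repaired. The reduction to showing $\varphi(x_i)\notin\varphi(H_{i-1})$, and the plan to extract from a nontrivial $g\in\ker\varphi$ a nontrivial central element of $\ker\varphi$, are both fine; the fatal gap is the final step where you try to conclude that such a central element must violate the hypotheses. The hypotheses only constrain $\varphi$ on \emph{simple} central commutators $[x_{\vec a}]$ and on the central basis elements $x_t$ (nontriviality and pairwise distinctness). They say nothing about $\varphi$ applied to a general central element $\prod x_t^{\gamma_t}$. When $h(Z(N))>1$, the kernel $K$ can intersect $Z(N)\cdot N^p$ in a ``diagonal'' cyclic subgroup — in the example, $\innp{u\,v^{a_p}}\bmod G^p$, which exists precisely because $T^2+1$ splits mod $p$ — and killing it leaves all the constrained elements $u,v,u^{-1}$ nontrivial and distinct while collapsing $|Q|$ well below $p^{h(N)}$. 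Your worry that $\varphi$ is not injective on $Z(N)$ and that the ``leading contribution'' must be shown prime to $p$ is exactly the symptom of this: no amount of commutator bookkeeping with $p>\mathrm{B}(N,\set{x_i})$ can force the central element you produce to be (a $p$-unit power of) one of the finitely many elements the hypotheses actually control. This failure mode is invisible when $h(Z(N))=1$ but is generic once the center has rank $\geq 2$ and a suitable quadratic form over $\Q$ becomes isotropic mod $p$ on a positive-density set of primes.
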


For the group $G$ defined at the beginning of this subsection, we note that $W(G,S)\cap Z(G) = \set{u,v,u^{-1}}$ and that $B(G,S) = 1$. The following proposition produces a positive natural density subset of prime numbers $p$ where there exists a finite $p$-quotient $Q_p$ of $G$ such that the hypotheses of Proposition \ref{incorrect_prop} are satisfied and where $|Q_p| = p^4$. Since Proposition \ref{incorrect_prop} predicts $|Q_p| = p^5$, we have an infinite collection of counterexamples for Proposition \ref{incorrect_prop}.

Before starting, we introduce some notation. Let $E = \set{p \in \mathbb{P} \: | \: 4 \text{ divides } p -1}$. For $p \in E$, we let $\set{a_{p},b_{p}}$ be the two distinct solutions to the equation $T^2 + 1 \equiv 0 \text{ mod } p$. Finally, we let $A_{p}$ and $B_{p}$ be the normal closures of the subgroups $\innp{x^{a_{p}} \: y}$ and $\innp{x^{b_{p}} \: y}$ in $G$, respectively.

\begin{prop}\label{incorrect_prop_counterexamples}
 If $p \in E$, then $\pi_{p}(A_{p}) \cap Z(G / G^{p}) \cong\Z / p \Z$ and $\pi_{p}(B_{p}) \cap Z(G / G^{p}) \cong \Z / p \Z.$ Additionally, $|G / G^{p} \cdot A_{p}| = |G / G^{p} \cdot B_{p} | = {p}^{4}$ and $Z(G / G^{p} \cdot A_{p}) \cong Z(G / G^{p} \cdot B_{p}) \cong \Z / p\Z.$ Moreover, we have that $\pi_{p}(A_{p}) \cap \pi_{p}(B_{p}) \cong \set{1}$ and $\innp{\pi_{p}(A_{p}), \pi_{p}(B_{p})} \cong Z(G / G^{p})$. Finally, we have that $\pi_{G^{p} \cdot A_{p}}(u),\pi_{G^{p} \cdot A_{p}}(v) \neq 1$, $\pi_{G^{p} \cdot B_{p}}(u), \pi_{G^{p} \cdot B_{p}}(v) \neq 1$, $
	\pi_{G^{p} \cdot A_{p}}(u) \neq \pi_{G^{p} \cdot A_{p}}(v)$, and that $\pi_{G^p \cdot B_{p}}(u) \neq \pi_{G^{p} \cdot B_{p}}(v).$

\end{prop}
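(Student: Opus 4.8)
The plan is to reduce the whole proposition to linear algebra over $\mathbb{F}_p$, using that $G$ has step length $2$ with $\ga_2(G) = Z(G) = \innp{u,v} \cong \Z^2$. Write $V = G/\ga_2(G)$, a free abelian group of rank $4$ with basis the images of $x,y,w,z$, and let $\map{\omega}{V \times V}{\ga_2(G)}$ be the $\Z$-bilinear alternating commutator pairing $\omega(\bar a,\bar b) = [a,b]$; in the given basis it is determined by $\omega(\bar x,\bar w) = \omega(\bar y,\bar z) = u$, $\omega(\bar x,\bar z) = v$, $\omega(\bar y,\bar w) = v^{-1}$ and $\omega(\bar x,\bar y) = \omega(\bar w,\bar z) = 1$. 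Since $G$ is torsion free nilpotent of class $2$ with a Mal'tsev basis and $p \geq 5$, the Mal'tsev coordinate calculus gives $|G/G^p| = p^{h(G)} = p^6$ and $Z(G) \cap G^p = Z(G)^p$; and computing $[\bar g, \bar t]$ for $\bar g = \prod_i x_i^{\al_i}$ and $\bar t$ running over $x,y,w,z$ produces four elements of $\ga_2(G)$ whose simultaneous triviality modulo $G^p$ forces $\al_1, \dots, \al_4 \equiv 0 \bmod p$, so $Z(G/G^p)$ is the image of $Z(G)$ and is $\cong (\Z/p\Z)^2$.

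Next I would pin down $A_p$ and $B_p$. Because every commutator is central, the normal closure of $\innp{h}$ with $h = x^{a_p} y$ is simply $A_p = \innp{h} \cdot [h, G]$, where $[h,G] \leq Z(G)$ is the subgroup generated by $[h,x], [h,y], [h,w], [h,z]$. Bilinearity of $\omega$ gives $[h,x] = [h,y] = 1$ and $[h,w] = u^{a_p} v^{-1}$, $[h,z] = u \, v^{a_p}$, so $[h,G] = \innp{u^{a_p} v^{-1},\ u \, v^{a_p}}$, a sublattice of $Z(G) \cong \Z^2$ of index $a_p^2 + 1$. The arithmetic hypothesis now enters decisively: because $a_p^2 \equiv -1 \bmod p$, these two generators become proportional modulo $G^p$ --- explicitly $u\,v^{a_p} \equiv (u^{a_p} v^{-1})^{-a_p} \bmod G^p$ --- so the image $\pi_p([h,G])$ is the line $L_A := \innp{\pi_p(u^{a_p} v^{-1})} \cong \Z/p\Z$ inside $Z(G/G^p)$. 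On the other hand $\pi_p(h)$ has order $p$ (as $h^p = x^{p a_p} y^p \in G^p$ while $h \notin \ga_2(G)\,G^p$, the latter also showing $\pi_p(h)$ is non-central), and it commutes with $L_A$; hence $\pi_p(A_p) = \innp{\pi_p(h)} \cdot L_A \cong (\Z/p\Z)^2$, whose intersection with $Z(G/G^p)$ is exactly $L_A$ (a power of $\pi_p(h)$ being central only if trivial). The identical computation with $b_p$ in place of $a_p$ handles $B_p$ and yields the line $L_B := \innp{\pi_p(u^{b_p} v^{-1})}$.

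The remaining assertions are then formal consequences. An index count gives $|G/(G^p \cdot A_p)| = |G/G^p| / |\pi_p(A_p)| = p^6/p^2 = p^4$, and likewise for $B_p$. For disjointness: an element of $\pi_p(A_p) \cap \pi_p(B_p)$ maps, in the mod-$p$ abelianization $G/\ga_2(G) G^p \cong \mathbb{F}_p^4$, into $\innp{(a_p,1,0,0)} \cap \innp{(b_p,1,0,0)}$, which is $\set{0}$ because $a_p \neq b_p$ in $\mathbb{F}_p$ (they are the two distinct roots of $T^2 + 1$); hence it lies in $Z(G/G^p)$, hence in $L_A \cap L_B$, which is again $\set{1}$ since $L_A \neq L_B$ for the same reason. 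Thus $\pi_p(A_p) \cap \pi_p(B_p) = \set{1}$. Moreover $L_A$ and $L_B$ are distinct lines in the $2$-dimensional $\mathbb{F}_p$-space $Z(G/G^p)$, so they already span it; since $\innp{\pi_p(A_p), \pi_p(B_p)}$ contains both, it contains all of $Z(G/G^p)$, which is the content of the penultimate assertion. Finally, the ``witnessing'' statements $\pi_{G^p \cdot A_p}(u) \neq 1$, $\pi_{G^p \cdot A_p}(v) \neq 1$ and $\pi_{G^p \cdot A_p}(u) \neq \pi_{G^p \cdot A_p}(v)$ say respectively that $(1,0)$, $(0,1)$ and $(1,-1)$ --- written in the $(u,v)$-basis of $Z(G/G^p)$ --- do not lie in $L_A = \set{(t a_p, -t) : t \in \mathbb{F}_p}$; the first two would force $t = 0$, and the third would force $a_p = 1$, impossible since $a_p^2 = -1 \neq 1$ for $p \geq 5$. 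The same three checks with $b_p$ dispose of the statements for $B_p$.

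The bulk of the argument is bookkeeping --- translating normal closures and $p$-th power subgroups into statements about $\omega$ and its reduction mod $p$, and tracking which sublattices of $Z(G)$ collapse. The step that requires genuine care is the computation of $Z(G/(G^p \cdot A_p))$: passing to the quotient by $\pi_p(A_p)$ can a priori enlarge the center, since an element that is non-central in $G/G^p$ may become central modulo $A_p$. One must therefore solve explicitly the linear system ``$[\bar g, \bar t] \in L_A$ for every generator $\bar t$'', and it is exactly the degeneracy produced by $a_p^2 \equiv -1$ that governs the outcome; I would isolate this as a short lemma about the reduction mod $p$ of the pair of alternating forms underlying $\omega$ together with the line $L_A$, and the corresponding statement for $B_p$ then follows by the symmetry $a_p \leftrightarrow b_p$.
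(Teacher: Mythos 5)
Your computation follows the same basic route as the paper's proof: identify $A_p$ as $\innp{x^{a_p}y}\cdot[x^{a_p}y,G]=\innp{x^{a_p}y,\,u^{a_p}v^{-1},\,uv^{a_p}}$, use $a_p^2\equiv-1\bmod p$ to collapse the two central generators onto a single line $L_A\leq Z(G/G^p)$, and then count indices and compare $a_p$ with $b_p$. You organize it more cleanly via the commutator form $\omega$, and you supply several checks the paper takes for granted, e.g.\ that $Z(G/G^p)$ is exactly the image of $Z(G)$, and the explicit non-membership of $u$, $v$, $uv^{-1}$ in $L_A$. You also correctly observe that the assertion $\innp{\pi_p(A_p),\pi_p(B_p)}\cong Z(G/G^p)$ cannot be literally true (the left-hand side has order at least $p^3$, since it contains the non-central $\pi_p(x^{a_p}y)$) and that the statement actually used in Proposition \ref{counter_example_first_thm_first_paper} is the containment $Z(G/G^p)\leq\innp{\pi_p(A_p),\pi_p(B_p)}$, i.e.\ that $L_A$ and $L_B$ are distinct lines spanning the center.

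The one claim you defer is $Z(G/(G^p\cdot A_p))\cong\Z/p\Z$, which you rightly flag as the delicate step (the center can grow on passing to a quotient) and propose to settle by solving the system ``$[\bar g,\bar t]\in L_A$ for all generators $\bar t$.'' If you actually run that system, you will find the claim is \emph{false}, and the paper's one-line justification, the bare assertion $\ga_2(G/(G^p\cdot A_p))\cong Z(G/(G^p\cdot A_p))$, does not hold. In $Q:=G/(G^p\cdot A_p)$ one has $y=x^{-a_p}$ and $v=u^{a_p}$, so the induced commutator form on $Q/\ga_2(Q)\cong\mathbb{F}_p^3$ (basis $\bar x,\bar w,\bar z$) takes values in $\ga_2(Q)\cong\mathbb{F}_p$ with $\bar\omega(\bar x,\bar w)=1$, $\bar\omega(\bar x,\bar z)=a_p$, $\bar\omega(\bar w,\bar z)=0$; its radical is the line $\mathbb{F}_p(-a_p\bar w+\bar z)$, not zero. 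Concretely, $g=w^{-a_p}z$ has $[g,x]=u^{a_p}v^{-1}\in A_p$, $[g,y]=(uv^{a_p})^{-1}\in A_p$, $[g,w]=[g,z]=1$, so $\pi_{G^p\cdot A_p}(g)$ is a central element of $Q$ outside $\ga_2(Q)$; hence $Z(Q)\cong(\Z/p\Z)^2$. This is exactly the degeneracy you anticipated the congruence $a_p^2\equiv-1$ would produce: the pencil $\omega_u+a_p\omega_v$ has determinant $(a_p^2+1)^2\equiv0\bmod p$. Fortunately the $Z(G/(G^p\cdot A_p))$ clause is never used in Proposition \ref{counter_example_first_thm_first_paper}, so the counterexample to \cite[Theorem 1.1]{Pengitore_1} is unaffected; but this clause of the proposition should be corrected, and your ``short lemma'' should record the radical computation rather than conclude $Z=\ga_2$.
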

\begin{proof}
	For the first statement, it is sufficient to prove that $|G / G^{p} \cdot A_{p}| = p^{4}$ and that $Z(G / G^{p}) \cap \pi(A_{p}) \cong \Z / p \Z$. By direct calculation, we have that $A_{p} \cong \innp{x^{a_{p}} \: y, u^{a_{p}} \: v^{-1}, u \: v^{a_{p}}}.$ Thus, $A_{p} \cap Z(G) \cong \innp{u^{a_{p}} \: v^{-1}, u \: v^{a_{p}}}.$
	Hence,
	$
	(u^{a_{p}} \: v^{-1})^{-a_{p}} = u^{-(a_{p})^2} \: v^{a_{p}} = u \: v^{a_{p}}  \text{ mod } G^{p}. 
	$
	Thus, 
	$
	\pi_{p}(A_{p}) \cap Z(G / G^{p}) \cong \innp{\pi_{p}(u \: v^{a_{p}})} \cong \Z / p \Z.
	$
	We note that each element $G / G^{p} \cdot A_{p}$ can be written uniquely as $\pi_{G^p \cdot A_p}(x^{\al_x} \: w^{\al_w} \: z^{\al_z} \: v^{\al_v})$ for natural numbers satisfying $0 \leq \al_x,\al_w,\al_z,\al_v < p$; thus, the second paragraph after \cite[Definition 8.2]{Eick_Holt_obrien} implies that $|G / G^{p} \cdot A_{p}| = p^{4}$. Moreover, we have that $\ga_2(G / G^{p} \cdot A_{p}) \cong Z(G / G^{p} \cdot A_{p})$. Hence, $Z(G / G^{p} \cdot A_{p}) \cong \Z / p \Z$.
	
	For the next statement, we note that $\pi_{p}(A_{p}) \cong \innp{u \: v^{a_{p}}}$ and that $\pi_{p}(B_{p}) \cong \innp{u \: v^{b_{p}}}$. Since $a_{p} \not \equiv b_{p} \: \text{ mod } p$, we have that $u \: v^{a_{p}} \neq u \: v^{b_{p}} \: \text{ mod } G^{p}$. Suppose for a contradiction that there exists a natural number $\ell$ such that $(u \: v^{a_{p}})^\ell	=   u \: v^{b_{p}} \text{ mod } G^{p}$. Since $(u \: v^{a_{p}})^\ell = u^\ell \: v^{\ell \: a_{p}}$, we must have that $\ell \equiv 1 \text{ mod } p$ and $\ell \: a_{p} \equiv  b_{p} \text{ mod } p$. Since $\ell \: a_{p} \equiv a_{p} \text{ mod } p$, we have that $a_{p} \equiv b_{p} \text{ mod } p$ which is a contradiction. In particular, $\pi_{p}(A_{p}) \cap \pi_{p}(B_{p}) = \set{1}$; hence, $\innp{\pi_{p}(A_{p}), \pi_{p}(B_{p})} \cong \Z / p \Z \times \Z / p \Z$. Since $Z(G / G^{p}) \cong \Z / p \Z \times \Z / p \Z$, it follows that $\innp{\pi_{p}(A_{p}), \pi_{p}(B_{p})} \cong Z(G / G^{p}).$
	
	The remaining two statements are evident.
\end{proof}
\cite[Theorem 1.1]{Pengitore_1} predicts that $\Farb_{G}(n) \approx \pr{\log(n)}^5$, and the following proposition provides a counterexample.
\begin{prop}\label{counter_example_first_thm_first_paper}
$
\Farb_{G}(n) \precsim \pr{\log(n)}^4.
$	
\end{prop}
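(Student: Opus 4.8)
The plan is to separate a nontrivial $g \in G$ with $\|g\| \le n$ into two cases according to whether $g \in \ga_2(G) = Z(G)$, after writing $g$ in Mal'tsev coordinates with respect to $S = \set{x,y,w,z,u,v}$ as $g = x^{\al_x} y^{\al_y} w^{\al_w} z^{\al_z} u^{\al_u} v^{\al_v}$. Since $\sqrt{\ga_2(G)} = \ga_2(G) = \innp{u,v}$ and $\sqrt{\ga_3(G)} = \set{1}$, Proposition~\ref{coord_bound} supplies a constant $C > 0$ with $\abs{\al_x},\abs{\al_y},\abs{\al_w},\abs{\al_z} \le Cn$ and $\abs{\al_u},\abs{\al_v} \le Cn^2$.

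If $g \notin \ga_2(G)$, then the image of $g$ in $G/\ga_2(G) \cong \Z^4$ is $(\al_x,\al_y,\al_w,\al_z) \neq 0$, so some coordinate $\al$ among these is a nonzero integer of absolute value at most $Cn$. Using Chebyshev's estimate $\prod_{p \le x} p = e^{(1+o(1))x}$, I would choose a prime $p \nmid \al$ with $p \le C' \log n$, and compose the abelianization $G \to \Z^4$ with the projection onto the coordinate of $\al$ followed by reduction mod $p$. This is a surjection onto $\Z/p\Z$ that is nontrivial on $g$, so $\D_G(g) \le p \le C' \log n \precsim (\log n)^4$.

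The main case is $g \in Z(G)$, where $(\al_x,\al_y,\al_w,\al_z) = 0$ and hence $g = u^m v^l$ with $(m,l) \neq (0,0)$ and $\abs{m},\abs{l} \le Cn^2$. Here I would detect $g$ using one of the two quotients $Q_A = G/(G^p \cdot A_p)$ and $Q_B = G/(G^p \cdot B_p)$ of Proposition~\ref{incorrect_prop_counterexamples}, each of order $p^4$, for a suitable prime $p \in E$. Tracing through the identifications in the proof of that proposition (the image of $Z(G)$ is all of the cyclic order-$p$ center of $Q_A$, with $uv^{a_p}$ generating the part that dies), one finds $\pi_{G^p \cdot A_p}(g) \neq 1$ precisely when $l \not\equiv a_p m \pmod p$, and likewise $\pi_{G^p \cdot B_p}(g) \neq 1$ precisely when $l \not\equiv b_p m \pmod p$. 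Since $a_p$ and $b_p$ are the two distinct solutions of $T^2 + 1 \equiv 0 \pmod p$, if both congruences failed then $(a_p - b_p)m \equiv 0$ forces $m \equiv l \equiv 0 \pmod p$, i.e.\ $p \mid \gcd(m,l)$. Thus it suffices to pick $p \in E$ with $p \nmid \gcd(m,l)$ and $p \le C'' \log n$; then $g$ survives in $Q_A$ or in $Q_B$, giving $\D_G(g) \le p^4 \le (C'')^4 (\log n)^4$.

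Taking the maximum over such $g$ then gives $\Farb_G(n) \precsim (\log n)^4$. The hard part is the central case, and the one genuinely new ingredient there is number-theoretic: an effective form of Dirichlet's theorem for the progression $1 \bmod 4$ — concretely the estimate $\sum_{p \le x,\, p \equiv 1\,(4)} \log p = (\tfrac12 + o(1))x$ — which guarantees a prime $p \equiv 1 \pmod 4$ of size $O(\log n)$ avoiding the finitely many prime divisors of $\gcd(m,l)$, the latter being $O(n^2)$ by Proposition~\ref{coord_bound}. The conceptual point, and the source of the drop from the exponent $5$ predicted by \cite[Theorem 1.1]{Pengitore_1} to $4$, is that the two square roots of $-1$ mod $p$ yield two different rank-one central quotients of $G/G^p$, and no nonzero central element of $G$ can become trivial in both at once.
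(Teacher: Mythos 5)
Your proof is correct and follows essentially the same strategy as the paper's: split on whether $g$ lies in $\ga_2(G)=Z(G)$, handle the non-central case via the abelianization, and in the central case use a prime $p\equiv 1\pmod 4$ of size $O(\log n)$ together with the order-$p^4$ quotients $G/(G^p\cdot A_p)$ and $G/(G^p\cdot B_p)$ from Proposition~\ref{incorrect_prop_counterexamples}. The only cosmetic differences are that the paper argues ``WLOG $\pi_p(g)\notin\pi_p(A_p)$'' directly from $\pi_p(A_p)\cap\pi_p(B_p)=\set{1}$ rather than writing out the congruences $l\equiv a_p m$, $l\equiv b_p m\pmod p$, and it invokes Chebotarev plus the Prime Number Theorem where you invoke Dirichlet's theorem for the progression $1\bmod 4$, which amount to the same input.
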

\begin{proof}

	Let $g \in G$ be a nontrivial element such that $\|g\| \leq n$. We may write
	$
	g = x^{\al_x} \: y^{\al_y} \: w^{\al_w} \: z^{\al_w} \: u^{\al_u} \: v^{\al_v}. 
	$
	Proposition \ref{coord_bound} implies that there exists a constant $C_1 > 0$ such that $
	|\al_x|,|\al_y|,|\al_w|,|\al_z| \leq C_1 \: n
	$
	and that 
	$
	|\al_u|,|\al_v| \leq C_1 \: n^2.
	$ 
	
	Suppose that $\pi_{\ga_2(G)}(g) \neq 1$. \cite[Corollary 2.3]{Bou_Rabee10} implies that there exists a surjective group morphism $\map{\varphi}{G/ \ga_2(G)}{P}$ to a finite group where $|P| \leq C_1 \: C_2 \: \log(C_1 \: C_2 \: n)$ for some constant $C_2 > 0$ such that  $\varphi(\pi_{\ga_2(G)}(g)) \neq 1$. Therefore, $\D_G(g) \leq C_1 \: C_2 \: \log(C_1 \: C_2 \: n).$
	
	Now suppose that $\pi_{\ga_2(G)}(g) = 1$. That implies that we may write $g = u^{\al_u} \: v^{\al_v}$. Let $E$, $A_p$, and $B_p$ be defined as above.  Without loss of generality, we may assume that $\al_u \neq 0$. Since Chebotarev's Density Theorem implies that $\delta(E) > 0$, the Prime Number Theorem \cite[1.2]{Tenenbaum} implies that there exists a prime number $p \in E$ such that $p \nmid \al_u$ and where $p \leq C_3 \: \log(C_3 \: |\al_u|)$ for some constant $C_3 > 0$. Therefore, there exists a constant $C_4 > 0$ such that $p \leq C_4 \: \log(C_4 \: n)$. Thus, $\pi_p(g) \neq 1$. Proposition \ref{incorrect_prop_counterexamples} implies that $\pi(A_p) \cap Z(G / G^p) \cong \Z / p \Z$ and that $\pi(N_p) \cap Z(G / G^p) \cong \Z / p \Z$. Since $Z(G / G^p) \cong \Z / p \Z \times \Z / p \Z$, we may assume that $\pi_p(g)\notin \pi(A_p)$. Thus, $\pi_{G^p \cdot A_p}(g) \neq 1$, and Proposition \ref{incorrect_prop_counterexamples} implies that $|G / G^p \cdot A_p| = p^4$. Hence, there exists a constant $C_5 > 0$ such that
	$
	\D_{G}(g) \leq C_5 \: \pr{\log(C_5 \: n)}^4.
	$
	Hence,
	$
	\Farb_{G}(n) \precsim \pr{\log(n)}^4.
	$
\end{proof}

	\section{Residual dimension}\label{residual_set_up}
The purpose of this section is to define the constants $\dim_{\RFL}(N)$, $\dim_{\RFU}(N)$, and $\dim_{\ARF}(N)$ for a torsion free, finitely generated nilpotent group $N$. We start by giving a lower bound for the order of a finite $p$-group in terms of the prime $p$ and its step length.
\begin{lemma}\label{p_dim_step_length}
	If $Q$ is an abelian finite $p$-group, then $|Q| \geq p$. If $Q$ has step length $c > 1$, then $|Q| \geq p^{c + 1}$.
\end{lemma}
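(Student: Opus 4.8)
The first assertion is immediate: a nontrivial finite $p$-group has order a positive power of $p$, so $|Q| \geq p$ (we tacitly assume $Q \neq \{1\}$, since the statement is vacuous otherwise). For the second assertion, let $Q$ be a finite $p$-group of step length exactly $c > 1$ and consider its lower central series
$$
Q = \ga_1(Q) \geq \ga_2(Q) \geq \cdots \geq \ga_c(Q) \geq \ga_{c+1}(Q) = \set{1}.
$$
Because the step length is \emph{exactly} $c$, each inclusion $\ga_{i+1}(Q) \subsetneq \ga_i(Q)$ is proper for $1 \leq i \leq c$. Each factor $\ga_i(Q)/\ga_{i+1}(Q)$ is therefore a nontrivial finite abelian $p$-group, hence has order at least $p$, which already gives $|Q| \geq p^{c}$. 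The whole content of the lemma is to extract one additional factor of $p$ from the top of the series, i.e.\ to show $|Q / \ga_2(Q)| \geq p^2$.

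The plan for that step is the standard Burnside basis argument. Suppose, for contradiction, that $Q/\ga_2(Q)$ is cyclic, generated by the image of some $x \in Q$; then $Q = \innp{x}\,\ga_2(Q)$. Since $Q$ is a finite $p$-group, its Frattini subgroup satisfies $\Phi(Q) \supseteq \ga_2(Q)$ (because $Q/\Phi(Q)$ is elementary abelian), so $Q = \innp{x}\,\Phi(Q)$. As the elements of $\Phi(Q)$ are non-generators, this forces $Q = \innp{x}$, so $Q$ is cyclic and hence abelian, contradicting $c > 1$. Therefore $Q/\ga_2(Q)$ is non-cyclic and $|Q/\ga_2(Q)| \geq p^2$. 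Combining,
$$
|Q| = |Q/\ga_2(Q)| \cdot \prod_{i=2}^{c} |\ga_i(Q)/\ga_{i+1}(Q)| \geq p^2 \cdot p^{\,c-1} = p^{\,c+1}.
$$

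The only non-routine ingredient is the non-cyclicity of the abelianization, for which I would invoke (or quote) the Frattini/Burnside basis theorem for finite $p$-groups; everything else is bookkeeping along the lower central series. It is worth remarking that the bound is sharp — the dihedral $2$-groups $D_{2^{c+1}}$ have order $2^{c+1}$ and class $c$ — so no further improvement of the exponent is possible at this level of generality.
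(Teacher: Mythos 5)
Your proof is correct, but it takes a genuinely different route from the paper's. The paper argues by induction on the step length $c$: the inductive step is exactly your bookkeeping ($|Q/\ga_c(Q)| \geq p^c$ by induction and $|\ga_c(Q)| \geq p$ since it is a nontrivial abelian $p$-group), but the base case $c = 2$ is handled by a hands-on computation: pick $x,y$ with $[x,y] \neq 1$, form $H = \innp{x, y, [x,y]}$, and argue $|H| \geq p^3$ via a normal-form count, then use $|H| \mid |Q|$. You instead run a single global argument down the whole lower central series to get $|Q| \geq p^c$ for free, and then extract the extra factor of $p$ from the top by invoking the Burnside basis theorem: $\ga_2(Q) \leq \Phi(Q)$, so a cyclic abelianization would force $Q$ itself to be cyclic, contradicting $c > 1$. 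Your route is cleaner and avoids the delicate normal-form claim in the paper's base case (which, as stated, requires more justification than the paper gives — e.g.\ in $D_8$ one has $[x,y] = x^2$, so the "unique" representation $x^t y^s [x,y]^\ell$ is not unique, though the bound $|H| \geq p^3$ still holds). The Frattini/Burnside argument is the standard way to see that a non-abelian $p$-group cannot have cyclic abelianization, and it makes the reason for the $+1$ in the exponent transparent; your remark on sharpness via dihedral $2$-groups is a worthwhile addition.
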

\begin{proof}
	Since the first statement is clear, we may assume that $Q$ has step length $c > 1$. We prove the second statement by induction on step length. For the base case, we assume that $Q$ has step length $2$. There exist elements $x,y \in Q$ such that $[x,y] \neq 1$. Since $Q$ has step length $2$, we have that $[x,y] \in [Q,Q] \leq Z(Q)$. Consider the group $H = \innp{x,y,[x,y]}$. Since each element in $H$ can be written uniquely as $x^t \: y^s \: [x,y]^\ell$ for integers satisfying $0 \leq t < \text{Ord}_Q(x)$, $0 \leq s < \text{Ord}_Q(y)$, and $0 \leq \ell < \text{Ord}_Q([x,y])$, we observe that the second paragraph after \cite[Definition 8.2]{Eick_Holt_obrien} implies that $|H| = \text{Ord}_Q(x) \cdot \text{Ord}_Q(y) \cdot \text{Ord}_Q([x,y])  \geq p^3.$ Since $|H|$ divides $|Q|$, we have that $|Q| \geq p^3$.
	
	Now suppose that $Q$ has step length $c > 2$. By induction, $|Q / \ga_c(Q)| \geq p^c$, and since $\ga_c(Q)$ is abelian, we have that $|\ga_c(Q)| \geq p$. In particular, $|Q| = |Q / \ga_c(Q)||\ga_c(Q)| \geq p^{c + 1}$.
\end{proof}

We say that an infinite order element $g$ of a torsion free finitely generated group $N$ is \textbf{primitive} if whenever there exists an element $h \in N$ and a non-zero integer $m$ such that $h^m = g$, then either $g = h$ or $g = h^{-1}$. In particular, if $N$ is a torsion free, finitely generated abelian group with a primitive element $z$, there exists a Mal'tsev generating basis $\set{x_i}_{i=1}^{h(N)}$ for $N$ as a $\Z$-module such that $x_1 = z$. One way to see that is to first note that $N / \innp{z} \cong \Z^{h(N) - 1}$ since $z$ is not the proper power of an element. By lifting a Mal'tsev generating basis for $\Z^{h(N) - 1}$, we obtain a Mal'tsev generating basis for $N$ which includes $z$.  The following lemma implies for any prime number $p$ that we may separate a primitive central element $x$ in a torsion free, finitely generated nilpotent group $N$ from the identity with a surjective group morphism to a finite $p$-group. 

\begin{lemma}\label{f_p_witness}
	Let $N$ be a torsion free, finitely generated nilpotent group, and let $z \in Z(N)$ be a primitive element. Let $p$ be a prime number. There exists a surjective group morphism $\map{\varphi}{N}{Q}$ to a finite $p$-group $Q$ such that $\varphi(z) \neq 1$ and where $|Q| \leq p^{h(N)}$.
\end{lemma}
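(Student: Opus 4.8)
The plan is to realize $Q$ as a ``reduction modulo $p$'' of $N$ taken with respect to a Mal'tsev basis whose first member is $z$; the preliminary fact needed is that $\innp{z}$ is isolated in $N$. Since $N$ is torsion free, $\sqrt[N]{\innp{z}}$ is a finitely generated, torsion free, nilpotent subgroup of $N$, and $\sqrt[N]{\innp{z}}/\innp{z}$ is a finitely generated nilpotent torsion group, hence finite. Therefore $\sqrt[N]{\innp{z}}$ is an infinite, virtually cyclic, torsion free group, so it is infinite cyclic, say $\sqrt[N]{\innp{z}} = \innp{w}$; writing $z = w^{m}$ and invoking primitivity of $z$ forces $m = \pm 1$, whence $\sqrt[N]{\innp{z}} = \innp{z}$. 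In particular $N/\innp{z}$ is a torsion free, finitely generated nilpotent group of Hirsch length $h(N)-1$.

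Since $z \in Z(N)$, the subgroup $\innp{z}$ is normal and central, so I would choose a central series of $N/\innp{z}$ with infinite cyclic factors and pull it back, obtaining a central series $1 = G_0 \triangleleft G_1 = \innp{z} \triangleleft G_2 \triangleleft \cdots \triangleleft G_{h(N)} = N$ with each $G_i \triangleleft N$ and $G_i/G_{i-1} \cong \Z$. Taking $x_1 = z$ and, for $2 \le i \le h(N)$, an element $x_i \in G_i$ projecting to a generator of $G_i/G_{i-1}$, yields a Mal'tsev basis $\set{x_i}_{i=1}^{h(N)}$ of $N$ (the same coordinate formalism as in \cite{Pengitore_1}, with the underlying central series merely adapted to $z$; see \cite{Hall_notes,Segal_book_polycyclic,Eick_Holt_obrien}): every element of $N$ is uniquely $\prod_{i=1}^{h(N)} x_i^{\alpha_i}$ with $\alpha_i \in \Z$, and the product, inverse, and conjugation operations, expressed in these coordinates, are polynomial maps with integer coefficients and vanishing constant term. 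Let $Q$ be the finite group with underlying set $\pr{\Z/p\Z}^{h(N)}$ whose operation is obtained by reducing these collection polynomials modulo $p$, and let $\map{\varphi}{N}{Q}$ send $\prod_i x_i^{\alpha_i}$ to $\pr{\alpha_1 \bmod p, \dots, \alpha_{h(N)} \bmod p}$. Because the collection polynomials define a group law over $\Z$, their mod-$p$ reductions define one over $\Z/p\Z$; hence $Q$ is a group, $\varphi$ is a surjective homomorphism, and $|Q| = p^{h(N)}$, so $Q$ is a finite $p$-group. Finally $z = x_1$ has coordinate vector $\pr{1,0,\dots,0}$, so $\varphi(z) = \pr{1,0,\dots,0} \neq 1$ while $|Q| = p^{h(N)} \leq p^{h(N)}$, which proves the lemma.

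The delicate point — and the reason one cannot simply take $Q = N/N^{p}$ — is the precise value $|Q| = p^{h(N)}$. For small primes $N^{p}$ can be strictly larger than $\ker\varphi = \set{\prod_i x_i^{\alpha_i} : p \mid \alpha_i \text{ for all } i}$: it can even happen that $\ga_2(N) \subseteq N^{p}$, which would kill every central element lying in the commutator subgroup, and the index $[N:N^{p}]$ need not equal $p^{h(N)}$. What makes the argument go through is exactly the integrality and vanishing constant term of the Mal'tsev collection polynomials, which guarantee that the congruence subgroup $\ker\varphi$ is normal and that the index multiplies by $p$ at each rung of the series, giving index exactly $p^{h(N)}$; I would cite this carefully (e.g. via \cite{Eick_Holt_obrien}). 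The other ingredient worth stressing is that isolation of $\innp{z}$ together with centrality of $z$ is precisely what lets $\innp{z}$ occupy the bottom rung of a central series with infinite cyclic factors, which is the only place where primitivity of $z$ is used.
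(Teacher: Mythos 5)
Your proof is correct and takes a genuinely different route from the paper's. The paper's proof simply sets $Q = N/N^p$: it writes $z$ in Mal'tsev coordinates, notes that primitivity forces some coordinate to be nonzero mod $p$, and from this concludes $\pi_p(z) \neq 1$ and $|N/N^p| = p^{h(N)}$. Both conclusions can fail for small primes, exactly as your closing paragraph warns. Concretely, in the integral Heisenberg group $N = \langle x, y, z \mid [x,y] = z,\ z \text{ central}\rangle$ with $p = 2$ one has $(xy)^2 = x^2 y^2 z^{-1}$, so $z = x^2 y^2 (xy)^{-2} \in N^2$; hence $\pi_2(z) = 1$ and $|N/N^2| = 4 < 2^3$. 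So the paper's argument is incorrect as written (though the lemma itself is true), and your congruence-subgroup construction is a genuine repair: it produces a quotient of order exactly $p^{h(N)}$ detecting $z$ for \emph{every} prime $p$. Your preliminary step — showing $\langle z\rangle$ is isolated so that $z$ can serve as $x_1$ of a Mal'tsev basis adapted to a central series with infinite cyclic factors — is where primitivity is actually used and is essential; the paper's proof does not set this up. The one point to pin down with a precise reference is that the Mal'tsev multiplication polynomials for such a basis have \emph{integer} coefficients (equivalently, that $\ker\varphi$ is a subgroup, hence normal of index $p^{h(N)}$). This is true and is in \cite{Hall_notes}, but it is not innocuous: the power-map polynomials $g \mapsto g^n$ in the same coordinates have genuine binomial-coefficient denominators, so the integrality of the \emph{multiplication} polynomials is where the real content of your construction lies.
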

\begin{proof}Let $\set{x_i}_{i=1}^{h(N)}$ be a Mal'tsev basis for $N$. We may write $z = \prod_{i=1}^{h(N)} x_i^{\al_i},$ and since $z$ is a primitive element, there exists an index $i_0$ such that $\al_{i_0} \not \equiv 0 \text{ mod } p$.  Since $\pi_p(Z(N)) \cong \prod_{i=1}^{h(Z(N))}\Z / p \Z$, we have that each element of $\pi_Z(N/N^p)$ may be written uniquely as $\pi_p(\prod_{i=1}^{h(Z(N))}x_i^{\beta_i})$ where $0 \leq \beta_i < p$. Thus, we have that $\pi_p(z) \neq 1$. One last observation is that $|N / N^p| = p^{h(N)}$ as desired.
\end{proof}
The above lemma implies that we are able to find a surjective group morphism from $N$ to a finite $p$-group of minimal order where the image of $z$ is not trivial. Thus, we have the following definition.
\begin{defn}\label{residual_p_dimension}
	Let $N$ be a torsion free, finitely generated nilpotent group with a prime number $p$ with a primitve element $z \in Z(N)$ be a primitive element. Proposition \ref{f_p_witness} implies that there exists a surjective group morphism $\map{\varphi}{N}{P}$ to a finite $p$-group such that $\varphi(z) \neq 1$ and $|P| = p^k$ where 
	$$
	k = \text{min}\set{m \: | \:  \exists \: \map{\varphi}{N}{Q} \: \text{ that satisfies Proposition \ref{f_p_witness} for $z$ and where $|Q| = p^m$}}.
	$$
	We refer to $\map{\varphi}{N}{P}$ as a\ \textbf{$p$-witness of $z$}. 
\end{defn}

The next few statements establish some properties of $p$-witnesses.

\begin{lemma}\label{cyclic_p_witness}
Let $N$ be a torsion free, finitely generated nilpotent group with a primitive element $g \in \sqrt{\ga_c(N)}$.  If $\varphi: N \to Q$ is a $p$-witness of $N$ of $g$ where $p$ is some prime, then $\innp{\varphi(g)} \cong \Z / p \Z$. In particular, $Z(Q)$ is cyclic.
\end{lemma}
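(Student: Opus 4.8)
The plan is to exploit the minimality built into the definition of a $p$-witness (Definition~\ref{residual_p_dimension}). First I would reduce to the case that $g$ is central: since $N$ has step length $c$, the term $\ga_c(N)$ is contained in $Z(N)$, and because the center of a torsion free nilpotent group is isolated, $\sqrt{\ga_c(N)} \subseteq \sqrt{Z(N)} = Z(N)$. Hence for the given $p$-witness $\map{\varphi}{N}{Q}$ the image $\varphi(g)$ is a nontrivial central element of the nontrivial finite $p$-group $Q$. Write $|\varphi(g)| = p^{j}$ with $j \geq 1$, so $\innp{\varphi(g)}$ is cyclic of order $p^{j}$ and $\innp{\varphi(g)} \leq Z(Q)$; in particular every subgroup of $\innp{\varphi(g)}$ is normal in $Q$.

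The first half of the lemma is then a one-line comparison of orders. Suppose $j \geq 2$ and put $V_0 = \innp{\varphi(g)^{p}}$, a subgroup of $\innp{\varphi(g)}$ of order $p^{j-1} \geq p$, normal in $Q$ since $\varphi(g) \in Z(Q)$. The composite $N \overset{\varphi}{\longrightarrow} Q \longrightarrow Q/V_0$ is a surjection onto a finite $p$-group in which the image of $g$ has order $p$ (in particular is nontrivial, as $\varphi(g) \notin V_0$), while $|Q/V_0| = |Q|/p^{j-1} < |Q|$. This contradicts the minimality of $|Q|$ among finite $p$-groups witnessing $g$; hence $j = 1$ and $\innp{\varphi(g)} \cong \Z/p\Z$.

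For the ``in particular'' clause I would argue the same way. Suppose $Z(Q)$ is not cyclic. Being a finite abelian $p$-group, its socle $Z(Q)[p] = \set{x \in Z(Q) \mid x^{p} = 1}$ is an $\mathbb{F}_{p}$-vector space of dimension at least $2$, and by the previous step $\innp{\varphi(g)}$ is a one-dimensional subspace of it. Choose a one-dimensional subspace $V \neq \innp{\varphi(g)}$ of $Z(Q)[p]$, so that $V \cap \innp{\varphi(g)} = \set{1}$; again $V \nsub Q$ since $V \leq Z(Q)$. Then $N \overset{\varphi}{\longrightarrow} Q \longrightarrow Q/V$ is a surjection onto a finite $p$-group with $|Q/V| = |Q|/p < |Q|$ in which the image of $g$ is nontrivial because $\varphi(g) \notin V$, once more contradicting minimality. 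Therefore $Z(Q)$ is cyclic.

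The only point that takes any thought is the opening reduction, namely that $\sqrt{\ga_{c}(N)} \subseteq Z(N)$, which is what makes $\varphi(g)$ central and the subgroups we quotient by normal; once that is in hand, both assertions follow by quotienting out a well-chosen central subgroup of $p$-power order and invoking minimality. (Primitivity of $g$ is used only to know, via Lemma~\ref{f_p_witness}, that a $p$-witness exists at all; the argument above needs only that $g$ is central and nontrivial.)
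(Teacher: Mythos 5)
Your proof is correct and follows essentially the same strategy as the paper's: in each case you exhibit a smaller central normal subgroup of $Q$ whose quotient still witnesses $g$, contradicting the minimality in the definition of a $p$-witness (for the order-$p$ claim, quotient by $\innp{\varphi(g)^p}$ exactly as the paper does; for cyclicity of $Z(Q)$ you use the socle $Z(Q)[p]$ where the paper uses a full cyclic decomposition of $Z(Q)$, a cosmetic difference). You are also right that the opening reduction $\sqrt{\ga_c(N)}\subseteq Z(N)$ is what makes $\varphi(g)$ central and the quotients legal — the paper uses this silently, and spelling it out is a small improvement.
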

\begin{proof}

Suppose for a contradiction that  $Z(Q)$ is not cyclic. There exists a generating subset $\{x_i\}_{i=1}^{k}$ for $Z(Q)$ such that every element $h \in Z(Q)$ may be written $h = \prod_{i=1}^k x_i^{t_i}$ where $0 \leq t_i < p_i^{\al_i}$ for some $\al_i \geq 1$. There exist natural numbers $\{s_i\}_{i=1}^k$ such that $\varphi(g) = \sum_{i=1}^{k}x_i^{s_i}$ and where $s_j \neq 0$ for some $j$. Since $P = \innp{x_i \: | \: i \neq j, 1 \leq i \leq k}$ is a normal subgroup, we have that $\pi_P \circ \varphi : N \to Q/P$ satisfies $\pi_P \circ \varphi(g) \neq 1$ which contradicts the definition of a $p$-witness.

Thus,  if $\varphi: N \to Q$ is a $p$-witness of $g$, then $Z(Q)$ is cyclic, and since $\varphi(g) \neq 1$, we have that $\innp{\varphi(g)} \cong \Z / p^t  \Z$ for some $t$. If $t > 1$, then by letting $P = \innp{\varphi(g)^{p}}$, we have that $\varphi(g) \notin P$. Thus, we have that $\pi_P \circ \varphi(g) \neq 1$ and $|Q / P| < |Q|$ which contradicts the fact that $Q$ is a $p$-witness of $g$.
\end{proof}

The following proposition relates the existence of $p$-witness for an arbitrary primitive element with the $p$-witnesses of elements of a Mal'tsev generating basis for $\sqrt{\ga_c(N)}$ under certain assumptions on the generating basis. For notational simplicity, we let $H = \sqrt{\ga_c(N)}$ and $h = h(\sqrt{\ga_c(N)})$.

\begin{prop}\label{p_dimension_basis}
Let $N$ be a torsion free, finitely generated nilpotent group, and let $\{x_i\}_{i=1}^{h}$ be a generating basis for $H$. Let $g = \prod_{i=1}^hx_i^{\al_i}$ be a primitve element, and  let $p$ be prime. For each $1 \leq i \leq h$, let $\varphi_i: N \to Q_i$ be a $p$-witness of $x_i$, and let $\psi:N \to P$ be a $p$-witness for $g$. Suppose for some $j$ where $p \nmid \al_j$, we have that $\varphi_{j}(g) \neq 1$. Then
$$
\text{min}\{ |Q_i| \: | \:  1 \leq i \leq h \} \leq |P| \leq \text{max}\{|Q_i| \: |  \: 1 \leq i \leq h \}
$$
\end{prop}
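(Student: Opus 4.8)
The plan is to derive both inequalities from the first assertion, that $\varphi_j(g)\neq 1$ whenever $p\nmid\al_j$. I would begin by recording the standing facts. Since $N$ is torsion free of step $c$, the subgroup $H=\sqrt{\ga_c(N)}$ is central in $N$ and torsion free, hence free abelian of rank $h$, and since $g\in H$ the element $g$ is primitive in $N$ exactly when it is primitive in the free abelian group $H$; thus $\gcd(\al_1,\dots,\al_h)=1$ and at least one index $j$ satisfies $p\nmid\al_j$. Granting $\varphi_j(g)\neq 1$ for every such $j$: the map $\varphi_j$ is then a surjection onto a finite $p$-group of order $|Q_j|$ that separates $g$ from $1$, so minimality of the $p$-witness $\psi$ of $g$ yields $|P|\le|Q_j|\le\max_i|Q_i|$. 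For the reverse inequality, each $x_i$ is central, so $\psi(g)=\prod_{i=1}^{h}\psi(x_i)^{\al_i}$; since $\psi(g)\neq 1$ some $\psi(x_{i_1})\neq 1$, so $\psi$ separates $x_{i_1}$, and minimality of $Q_{i_1}$ as a $p$-witness of $x_{i_1}$ gives $|Q_{i_1}|\le|P|$, hence $\min_i|Q_i|\le|P|$.

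It remains to prove $\varphi_j(g)\neq 1$ when $p\nmid\al_j$. Because $\varphi_j$ is onto and the $x_i$ are central, $\varphi_j(x_i)\in Z(Q_j)$ for all $i$, so $\varphi_j(g)=\prod_i\varphi_j(x_i)^{\al_i}$ lies in $Z(Q_j)$, which is cyclic by Lemma \ref{cyclic_p_witness}; moreover $\langle\varphi_j(x_j)\rangle\cong\Z/p\Z$, so $\langle\varphi_j(x_j)\rangle$ is the unique subgroup of order $p$ of the cyclic group $Z(Q_j)$, and therefore lies in every nontrivial subgroup of $Z(Q_j)$. Put $R=\langle\varphi_j(x_i):i\neq j\rangle\le Z(Q_j)$, a normal subgroup of $Q_j$. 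If $\varphi_j(x_j)\notin R$, then the composite $N\to Q_j\to Q_j/R$ still separates $x_j$, while $|Q_j/R|\le|Q_j|$ with equality only if $R=\{1\}$; minimality of $|Q_j|$ among finite $p$-quotients separating $x_j$ then forces $R=\{1\}$, i.e.\ $\varphi_j(x_i)=1$ for every $i\neq j$, and then $\varphi_j(g)=\varphi_j(x_j)^{\al_j}\neq 1$ since $\varphi_j(x_j)$ has order $p$ and $p\nmid\al_j$.

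The remaining possibility, $\varphi_j(x_j)\in R$, is the one I expect to be the main obstacle: here $Q_j/R$ collapses $x_j$ as well, so the minimality argument does not close, and one must instead show that a $p$-witness of $x_j$ can always be chosen so that $R=\{1\}$ — equivalently, so that $\varphi_j$ factors through $\bar N := N/\langle x_i:i\neq j\rangle$. The group $\bar N$ is again torsion free, since $\langle x_i:i\neq j\rangle$ is isolated in $N$ (if some positive power of $z$ lies in it then $z\in H$, and $H/\langle x_i:i\neq j\rangle\cong\Z$ is torsion free), and the image $\bar x_j$ remains primitive in $\bar N$; a $p$-witness $\bar\varphi:\bar N\to\bar Q$ of $\bar x_j$ pulls back to a morphism $N\to\bar Q$ that kills each $x_i$ with $i\neq j$ and sends $g$ to $\bar\varphi(\bar x_j)^{\al_j}\neq 1$. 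What has to be checked is that $|\bar Q|$ is not larger than the $p$-witness order $|Q_j|$, i.e.\ that the minimal order of a finite $p$-quotient separating $\bar x_j$ in $\bar N$ is at most the corresponding minimum for $x_j$ in $N$; running the quotient-by-$R$ trick on an arbitrary $p$-witness of $x_j$ in $N$ reduces this to the clean case of the previous paragraph except precisely in the situation $\varphi_j(x_j)\in R$. So making the reduction airtight — ruling out that separating $x_j$ in $N$ genuinely requires nontrivial images for the other basis vectors of $H$ — is the technical heart of the proof; I would attack it using the totally ordered subgroup lattice of the cyclic group $Z(Q_j)$ together with a modification of the $p$-witness along the complement of $x_j$ in $H$.
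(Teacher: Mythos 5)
The gap you flagged---the case $\varphi_j(x_j)\in R$---is genuine, and in fact it cannot be closed: the first assertion of the proposition is false as stated. The paper's own proof hits the same obstruction. After passing to a basis $\{a_\ell\}$ of $H$ with $\varphi_j(a_\ell)=1$ for $\ell>1$ and $\varphi_j(a_1)^{p^{s-1}}=\varphi_j(x_j)$, the paper's proof concludes that the $a_1$-coordinate of $g$ equals $p^{s-1}\alpha_j$; but this tacitly assumes that the $a_1$-coordinates of the $x_i$ with $i\neq j$ are divisible by $p^s$, equivalently that $\varphi_j(x_i)=1$ for all $i\neq j$, which is exactly your ``$R=\{1\}$'' case. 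In general the $a_1$-coordinate of $g$ is $p^{s-1}\alpha_j+\sum_{i\neq j}c_i\alpha_i$ modulo $p^s$ for uncontrolled integers $c_i$, and the cross-terms can cancel the leading term, so no contradiction with $p\nmid\alpha_j$ follows.

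Bou-Rabee's group $G$ from \S\ref{motivation} gives a concrete counterexample. Fix $p\equiv 1\pmod 4$, let $a\in\{1,\dots,p-1\}$ satisfy $a^2\equiv-1\pmod p$, and take $g=uv^a\in H=Z(G)$, a primitive element with $\alpha_1=1$ and $\alpha_2=a$. Let $\varphi_1\colon G\to G/K$, where $K$ is generated by $G^p$ together with the normal closure of $\{xy^{-a},\,wz^a\}$. Then $|G/K|=p^3$ and $\varphi_1(u)\neq 1$, so $\varphi_1$ is a $p$-witness of $u=x_1$: any smaller $p$-quotient has order at most $p^2$, hence is abelian and kills $u\in[G,G]$. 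But $uv^a=[xy^{-a},w]\in K$, so $\varphi_1(g)=1$ even though $p\nmid\alpha_1$. Here $\varphi_1(v)=\varphi_1(u)^a\neq 1$, so $\varphi_1(x_1)\in R$---precisely your problematic case. What might survive is an existential version: for some choice of $p$-witness of $x_j$ (here, the one built from the other square root $-a$, which sends $g$ to $\varphi(u)^{2}\neq 1$), the image of $g$ is nontrivial, and that weaker statement would still yield the displayed inequality; but neither your sketch nor the paper establishes that such a choice always exists.
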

\begin{proof}
Since $\varphi_j(g) \neq 1$, we have by definition that
$$
|P| \leq |Q_j| \leq \text{max}\{ |Q_i| \: | \:  1 \leq i \leq h \}
$$

Given that the other side of the inequality is clear when $\text{min}\{ |Q_i| \: | \:  1 \leq i \leq h \} = 1$, we may assume that $\text{min}\{ |Q_i| \: | \:  1 \leq i \leq h \} > 1$. Now suppose for a contradiction that $|P| < \text{min}\{|Q_i| \: | \: 1 \leq i \leq h \}$. We then have by the definition of a $p$-witness that $\psi(x_i) = 1$ for each $1 \leq i \leq h$. Therefore, $\psi(g) = \prod_{i=1}^h \psi(x_i)^{\al_i} = 1$ which is a contradiction. Thus, $|P| \geq \text{min}\{|Q_i| \: | \: 1 \leq i \leq h \}.$
\end{proof}

We note that even for $\Z^k$ with a Mal'tsev generating basis $\{z_i\}_{i=1}^k$ there exist $p$-witnesses for $z_1$ such that a given primitive element must vanish. For instance, consider the surjective group morphism $\varphi \colon \Z^2 \to \Z / p \Z$ given by $z_1^{\al_1} \: z_2^{\al_2} \to \al_1 + \al_2 \text{ mod } p$. It is easy to see that $\varphi \colon \Z^2 \to \Z / p \Z$ is a $p$-witness for $z_1$ and $z_2$; however, the primitive element $g = z_1 \: z_2^{p-1}$ satisfies $\varphi(g) = 1$. On the other hand, we also have the surjective group morphism $\psi \colon \Z^2 \to \Z / p \Z$ given by $z_1^{\al_1} z_2^{\al_2} \to \al_1 \text{ mod } p$ which is easily seen to be a $p$-witness for $z_1$ satisfying $\psi(g) \neq 1$. This discussion demonstrates that not every finite $p$-witness for elements of a Mal'tsev generating basis for $\sqrt{\ga_c(N)}$ is useful for separating powers of an arbitrary primitive element in that there may be finite $p$-witness in which a fixed primitve element must vanish. That, in turn, implies that all powers of this element must also vanish. Thus, we have an interest in the existence of generating basis for $\sqrt{\ga_c(N)}$ which controls the order of a $p$-witness for any primitive element. Thus, we have the following definition.
\begin{defn}
Let $N$ be a torsion free, finitely generated nilpotent group of step length $c$ with a generating basis $\{x_i\}_{i=1}^h$ for $\sqrt{\ga_c(N)}$. We say that $\{x_i\}_{i=1}^h$ is a \textbf{residually tame basis} for $\sqrt{\ga_c(N)}$ if the following holds. Given any prime $p$ and any primitive element $g = \prod_{i=1}^{h}x_i^{\al_i}$, there exists an index $i$ such that $p \nmid \al_i$ and where there exists a finite $p$-witness $\varphi \colon N \to P$ for $x_i$ such that $\varphi(g) \neq 1$.
\end{defn}

In the following discussion, we will see that any torsion free, finitely generated abelian group has a residually tame generating basis. Note that any primitive element in $\Z^h$ can be written as $g = (\al_1, \cdots, \al_h)$ where $\gcd(\al_1, \cdots, \al_h) = 1$. Thus, if $p$ is a prime, there exists an $i_0$ such that $p \nmid \al_{i_0}$. Hence, we have that $\pi(g) \neq 0$ where $\pi \colon \Z^h \to \Z / p\Z$ be the projection onto the $i_0$-th coordinate mod $p$, and since $\pi \colon \Z^h \to \Z / p \Z$ is a $p$-witness for an element of our generating basis, we have our claim. Moreover, when $N$ is a torsion free, finitely generated nilpotent group where $h(\sqrt{\ga_c(N)}) = 1$, then by the above discussion we have that $\sqrt{\ga_c(N)}$ admits a residually generating basis. Thus, a natural question is whether a residually tame generating basis always exists for $\sqrt{\ga_c(N)}$ for any torsion free, finitely generated nilpotent group $N$. However, as of this writing, the author is unaware of such a construction of such a basis. As we will see later, the existence of a residually tame generating basis will be essential in producing upper asymptotic bounds for residual finiteness better than the ones found in \cite{Pengitore_1}.

\subsection{Lower residual dimension}
We start this subsection by introducing the following definition which will be important for both the upper and lower asymptotic bounds for residual finiteness of finitely generated nilpotent groups.
\begin{defn}
For each $1 \leq i \leq h(N)$, we define 
$$
\RP_{N,z,i} \bdef \{p \in \mathbb{P} \: | \:  \text{$p^i$ is the order of a $p$-witness of $z$}\}.
$$
We call $\RP_{N,z,i}$ the \textbf{set of residual prime numbers of $N$ with respect to $z$ of dimension $i$.}
\end{defn}

Suppose that $N$ is a torsion free, finitely generated nilpotent group of step length $c$ with a primitive element $z \in \sqrt{\ga_c(N)}$. Since the sets $\RP_{N,z,i}$ form a finite partition of the set of primes, we have by the Pidgeonhole principle that there  must be an index $1 \leq i_0 \leq h(N)$ such that $|\RP_{N,z,i_0}| = \infty$. That observation allows us to introduce a natural number that measures the lower asymptotic complexity of separating $z$ from the identity with surjective group morphisms to finite $p$-groups as we vary over all prime numbers $p$ based on the cardinality of $\RP_{N,z,i}$ for each $1 \leq i \leq h(N)$.

\begin{defn}
	Let $N$ be a torsion free, finitely generated nilpotent group of step length $c$ with a primitive element $z \in \sqrt{\ga_c(N)}$. There exists a minimal natural number $1 \leq t_0 \leq h(N)$ such that $|\RP_{N,z,t_0}| = \infty$. We call the natural number $t_0$ the \textbf{lower residual dimension of $z$ in $N$} and denote it as $\dim_{\RFL}(N,z)$. We call $\LR_{N,z} =\RP_{N,z,t_0}$ the \textbf{set of prime numbers that realize the lower residual dimension of $z$}.
\end{defn}
By maximizing over primitive elements in $\sqrt{\ga_{c}(N)}$, we obtain a natural invariant associated to any torsion free, finitely generated nilpotent group which gives the degree of polynomial in logarithm growth for a lower bound of residual finiteness.

\begin{defn}
	Let $N$ be a torsion free, finitely generated nilpotent group of step length $c$. Let 
	$$
	\dim_{\RFL}(N) \bdef \text{max}\{  \dim_{\RFL}(N,z) | \: z \in \sqrt{\ga_{c}(N)} \text{ is primitve}\}.
	$$
	We call $\dim_{\RFL}(N)$ the \textbf{lower residual dimension of $N$.} For an infinite, finitely generated nilpotent group $N$, we denote $\dim_{\RFL}(N) \bdef \dim_{\RFL}(N / T(N)).$ 
\end{defn}

We finish this subsection by giving a lower bound for $\dim_{\RFL}(N)$ in terms of the step length of $N$ when $N$ is a torsion free, finitely generated nilpotent group.
\begin{prop}\label{lower_bound_residual_dimension}
	Let $N$ be a torsion free, finitely generated nilpotent group of step length $c > 1$. Then $\dim_{\RFL}(N) \geq c+1$.
\end{prop}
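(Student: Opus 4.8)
The plan is to exhibit, for any torsion free, finitely generated nilpotent group $N$ of step length $c>1$, a single primitive element $z \in \sqrt{\ga_c(N)}$ with $\dim_{\RFL}(N,z) \geq c+1$; by the definition of $\dim_{\RFL}(N)$ as a maximum over such primitive elements, this suffices. The natural candidate is to choose $z$ to be a primitive element lying in $\sqrt{\ga_c(N)}$ that arises as (a root of) a nontrivial $c$-fold simple commutator $[x_1,\dots,x_c]$ of Mal'tsev basis elements — such an element exists precisely because $N$ has step length exactly $c$, so $\ga_c(N) \neq \{1\}$. The point is that any finite $p$-quotient in which $z$ survives must itself have step length $\geq c$, and then Lemma \ref{p_dim_step_length} forces the order of that quotient to be at least $p^{c+1}$.

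The key steps, in order: First, fix a Mal'tsev basis $\set{x_i}$ and locate a nontrivial simple commutator $w = [x_{a_1},\dots,x_{a_c}] \in \ga_c(N) \setminus \{1\}$; then take $z$ to be the primitive element in $\sqrt{\ga_c(N)}$ of which $w$ is a power (equivalently, replace $w$ by its primitive root inside the rank-free abelian group $\sqrt{\ga_c(N)}$ — here I use that $\sqrt{\ga_c(N)}$ is a finitely generated torsion free abelian group, so every nonzero element is a power of a primitive one). Second, for an arbitrary prime $p$, let $\map{\varphi}{N}{Q}$ be a $p$-witness of $z$, so $\varphi(z) \neq 1$ and $Q$ is a finite $p$-group. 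Since $z$ is a root of $w$ and $\varphi(z) \neq 1$, the element $\varphi(w) = \varphi(z)^m$ (for the appropriate $m$) is a nonzero power of a nontrivial element of the $p$-group $Q$; I need $\varphi(w) \neq 1$, which holds because $\innp{\varphi(z)} \cong \Z/p\Z$ by Lemma \ref{cyclic_p_witness} and $w = z^m$ with $p \nmid m$ — this last divisibility is the one delicate point and must be arranged when choosing $z$ as the primitive root (one can absorb the factor by passing to a further quotient if necessary, or argue that $m$ can be taken coprime to $p$ for all but finitely many $p$ and handle the finitely many bad primes separately, which is harmless since we only need $|\RP_{N,z,t_0}| = \infty$ for the minimal such $t_0 \leq c+1$ — wait, this needs care). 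Third, from $\varphi([x_{a_1},\dots,x_{a_c}]) \neq 1$ conclude that $\ga_c(Q) \neq \{1\}$, so $Q$ has step length $\geq c$; hence by Lemma \ref{p_dim_step_length}, $|Q| \geq p^{c+1}$, i.e., the order of every $p$-witness of $z$ is $\geq p^{c+1}$, so $\RP_{N,z,i} = \emptyset$ for $i < c+1$. Fourth, conclude that $\dim_{\RFL}(N,z)$, being the minimal $i$ with $|\RP_{N,z,i}| = \infty$, satisfies $\dim_{\RFL}(N,z) \geq c+1$, and therefore $\dim_{\RFL}(N) \geq c+1$.

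The main obstacle is the coprimality issue in the second step: $z$ is the \emph{primitive} root of the commutator $w$, so $w = z^m$ for some $m \geq 1$ that need not be coprime to a given $p$. If $p \mid m$, then $\varphi(z) \neq 1$ does not immediately give $\varphi(w) \neq 1$, and the step-length argument stalls for that prime. The clean fix is to observe that $m$ is a fixed positive integer depending only on $N$ (and the choice of commutator), so $p \mid m$ for only finitely many primes $p$; for all remaining primes, the argument above shows every $p$-witness of $z$ has order $\geq p^{c+1}$. Since $\RP_{N,z,i}$ differs from "the set of all primes" by removing primes $p$ with $i$ the $p$-witness dimension, and since for cofinitely many primes the $p$-witness dimension is $\geq c+1$, we still get $|\RP_{N,z,i}| < \infty$ for $i < c+1$; combined with $\sum_i |\RP_{N,z,i}| $ accounting for all primes, some $i \geq c+1$ has $|\RP_{N,z,i}| = \infty$, giving $\dim_{\RFL}(N,z) \geq c+1$. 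Alternatively — and perhaps more cleanly — one chooses $z$ more carefully: since $\ga_c(N)$ and $\sqrt{\ga_c(N)}$ have the same rank, one can often take the commutator $w$ itself, or a commutator of suitably chosen basis elements, to already be primitive in $\sqrt{\ga_c(N)}$, eliminating the factor $m$ entirely; I would state the proof using whichever of these is least technical.
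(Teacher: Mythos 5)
Your proposal is correct and matches the paper's argument. The paper's version is streamlined in one respect: rather than constructing $z$ as a primitive root of a specific simple commutator $[x_{a_1},\dots,x_{a_c}]$, it simply takes \emph{any} primitive $z \in \sqrt{\ga_c(N)}$ and uses the definition of the isolator to get a natural number $k$ with $z^k \in \ga_c(N)$; your extra step of locating a commutator and worrying about whether it is primitive is unnecessary. The coprimality concern you flag as ``the one delicate point'' is resolved exactly as you propose and exactly as the paper does: $k$ (your $m$) is a fixed integer, so only finitely many primes divide it, hence for all remaining $p$ a $p$-witness $\varphi\colon N \to Q$ satisfies $\innp{\varphi(z^k)} = \innp{\varphi(z)}$, so $\ga_c(Q) \neq \{1\}$ and Lemma \ref{p_dim_step_length} gives $|Q| \geq p^{c+1}$. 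This forces $\RP_{N,z,i}$ to be finite for $i \leq c$ and some $\RP_{N,z,i}$ with $i \geq c+1$ to be infinite, yielding $\dim_{\RFL}(N,z) \geq c+1$ and hence $\dim_{\RFL}(N) \geq c+1$.
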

\begin{proof}
Let $z \in \sqrt{\ga_c(N)}$ be a primitive element. Thus, there exists some natural number $k$ such that $z^k \in \ga_c(N)$.  Let $\map{\varphi}{N}{Q}$ be a $p$-witness $z$ where $p$ is a prime number that does not divide $k$. Since $p \nmid k$, we have that $\gcd(k,p) = 1$. In particular, $\innp{\varphi(z^k)} = \innp{\varphi(z)}$.
Since $\varphi(z^k) \neq 1$ and $z^k \in \ga_c(N)$,  we have that $Q$ has the same step length as $N$, and thus, Lemma \ref{p_dim_step_length} implies that $|Q| \geq c + 1$.

Let $A$ be the set of prime numbers that divide $k$, and let $B = \cup_{i = c + 1}^{h(N)}\RP_{N,z,i}$. We note that if $p \in \mathbb{P} \backslash A$, then the above claim implies that $p \in B$. Since $A$ is finite, we must have that $B$ is infinite and that $\RP_{N,z,t}$ is finite for $1 \leq t \leq c$. Thus, there exists a minimal index $i_0$ such that $i_0 \geq c + 1$ and where $\RP_{N,z,i_0}$ is infinite. That implies $\dim_{\RFL}(N,z) \geq c + 1$. By definition, $\dim_{\RFL}(N) \geq \dim_{\RFL}(N,z) \geq c + 1$. Hence, $\dim_{\RFL}(N) \geq c + 1$.
\end{proof}

\subsection{Upper residual dimension}
This subsection gives conditions on when the upper asymptotic bound for $\Farb_{N}(n)$ can be improved to be better than that of \cite[Theorem 1.1]{Pengitore_1}.

\begin{defn}\label{tame_residual_defn}
	Let $N$ be a torsion free, finitely generated nilpotent group of step length $c$, and suppose that $x \in \sqrt{\ga_{c}(N)}$ is a primitive element. Suppose for each $1 \leq i \leq h(N)$ that the set $\RP_{N,x,i}$ has a natural density. We then say that $N$ has \textbf{tame residual dimension at $x$.} We define the \textbf{upper residual dimension of $N$ at $x$} to be the minimal index $i_0$, denoted $\dim_{\RFU}(N,x)$, such that $\delta(\RP_{N,x,i_0}) > 0$.  We write $UR_{N,x} = \RP_{N,x,t_0}$ and call  $UR_{N,x}$ the \textbf{set of prime numbers that realize the upper residual dimension of $N$ at $x$}.
	
	Now suppose that $N$ has tame residual dimension at all primitive elements $x \in \sqrt{\ga_c(N)}$. We denote the \textbf{upper residual dimension of $N$} as
	$$
	\dim_{\RFU}(N) \bdef \text{max}\{ \dim_{\RFU}(N,x) \: | \: x \in \sqrt{\ga_c(N)} \text{ such that } x \text{ is primitive}\}.
	$$
	When $N$ is an infinite, finitely generated nilpotent such that $N/T(N)$ has tame residual dimension at every primitive element $x \in \sqrt{\ga_c(N/T(N))}$ where $c$ is the step length of $N/T(N)$, we denote $\dim_{\RFU}(N) \bdef \dim_{\RFU}(N/T(N))$.
\end{defn}

The following lemma forms the basis for the definition of tame residual dimension.
\begin{lemma}\label{abelian_tame}
	Let $A$ be a torsion free, finitely generated abelian group, and let $x$ be a primitive element. Then $\RP_{A,x,1} = \mathbb{P}$.
\end{lemma}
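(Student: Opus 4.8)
The plan is to show that for a torsion free, finitely generated abelian group $A$ with primitive element $x$, every prime $p$ admits a $p$-witness of $x$ of order exactly $p$, so that $\RP_{A,x,1} = \mathbb{P}$. The key observation is Lemma \ref{p_dim_step_length}: any finite $p$-group $Q$ has $|Q| \geq p$, with equality possible only when $Q$ is cyclic of order $p$; since abelian groups have step length $1$ (or $0$ if trivial), the minimal conceivable order of a $p$-witness is $p^1$, so it suffices to exhibit one surjection $A \to \Z/p\Z$ sending $x$ to a nontrivial element.

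First I would invoke the remark made just before Lemma \ref{f_p_witness}: since $x$ is a primitive element of the torsion free, finitely generated abelian group $A$, there is a $\Z$-module basis $\{x_i\}_{i=1}^{h(A)}$ of $A$ with $x_1 = x$. Then the coordinate projection $\varphi \colon A \to \Z/p\Z$ defined by $\varphi\big(\prod_i x_i^{\al_i}\big) = \al_1 \bmod p$ is a well-defined surjective group morphism onto a finite $p$-group, and $\varphi(x) = 1 \bmod p \neq 0$, so $\varphi(x) \neq 1$ in $\Z/p\Z$. Hence $\varphi$ witnesses $x$ and has order $p$.

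Next I would check that $p^1$ is in fact minimal: by Lemma \ref{p_dim_step_length}, any finite $p$-group $Q$ satisfies $|Q| \geq p$, so no $p$-witness of $x$ can have order $p^0 = 1$ (a witness is by definition nontrivial). Therefore the minimal order among surjections $A \to Q$ to finite $p$-groups with $\varphi(x) \neq 1$ is exactly $p$, i.e. every $p$-witness of $x$ has order $p$, which is precisely the statement $p \in \RP_{A,x,1}$. Since $p$ was an arbitrary prime, $\RP_{A,x,1} = \mathbb{P}$.

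I do not anticipate a genuine obstacle here — the only thing to be careful about is the bookkeeping that a ``$p$-witness'' as defined in Definition \ref{residual_p_dimension} is the minimal-order such morphism, so one must argue both that order $p$ is achievable (the explicit coordinate projection) and that nothing smaller works (Lemma \ref{p_dim_step_length}); neither step requires computation. If one wanted, one could also note $\RP_{A,x,i} = \emptyset$ for $i \geq 2$ as an immediate corollary, though that is not needed for the stated lemma.
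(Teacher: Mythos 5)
Your proof is correct and takes essentially the same approach as the paper: both exhibit a basis of $A$ extending $x$ and project onto the coordinate of $x$ modulo $p$ to produce the required surjection onto $\Z/p\Z$. You add an explicit minimality check via Lemma \ref{p_dim_step_length}, which the paper leaves implicit, but the underlying argument is identical.
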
	
\begin{proof}
	Let $p$ be prime number. Since $x$ is primitive, there exists a generating basis $\set{z_i}_{i=1}^{h(A)}$ for $A$ such that $z_1 = x$. Letting $B = \innp{z_i}_{i=2}^{h(A)}$, we note that $A / B \cong \Z$ and that $\pi_B(x) \neq 1$. By taking the natural map  $\map{\varphi}{\Z}{\Z / p \Z}$ given by reduction modulo $p$, one can see that $\varphi \circ \pi_B(x) \neq 1$. Thus, $p \in \RP_{A,x,1}$ as desired.
\end{proof}
	
\begin{defn}
Suppose that $N$ is a torsion free, finitely generated nilpotent group of step length $c$. If $N$ is abelian, we have by Lemma \ref{abelian_tame} that $N$ has tame residual dimension at all primitive elements and that any Mal'tsev basis for $N$ is a residually tame generating basis for $\ga_1(N) = N$. Thus, we say that all torsion free, finitely generated abelian groups have \textbf{tame residual dimension.} Now suppose that $N$ has step length $c > 1$. Suppose that $N / \sqrt{\ga_c(N)}$ has tame residual dimension, that there exists a residually tame basis for $\sqrt{\ga_c(N)}$, and that $N$ has tame residual dimension at all primitive elements $x \in \sqrt{\ga_c(N)}$.  We say that $N$ has \textbf{tame residual dimension}.

If $N$ is an infinite, finitely generated nilpotent group such that $N/T(N)$ has tame residual dimension, we say that $N$ has \textbf{tame residual dimension.}
\end{defn}

We now relate the upper residual dimension of a torsion free, finitely generated nilpotent group to torsion free quotients of lower step length.
\begin{prop}\label{residual_dim_lower_step_quotient}
	Let $N$ be a torsion free, finitely generated nilpotent group of step length $c$, and suppose that $N$ has tame residual dimension.  If $N$ is abelian, then $\dim_{\RFU}(N) = 1$. Otherwise, letting $M = \sqrt{\ga_c(N)}$, we then have that $\dim_{\RFU}(N/M) \leq \dim_{\RFU}(N)$.
\end{prop}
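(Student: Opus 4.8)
The plan is to push finite $p$-quotients of $N$ down to $N/M$ and compare them one primitive element at a time. If $N$ is abelian, Lemma~\ref{abelian_tame} gives $\RP_{N,x,1}=\mathbb{P}$ for every primitive $x$, so $\delta(\RP_{N,x,1})=1$, hence $\dim_{\RFU}(N,x)=1$ for all such $x$ and $\dim_{\RFU}(N)=1$. So assume $N$ is nonabelian and let $c'$ be the step length of $N/M$; since $\ga_c(N)\subseteq\sqrt{\ga_c(N)}=M$ we have $\ga_c(N/M)=1$, so $c'\le c-1$, and if $c'\le 1$ the abelian case already gives $\dim_{\RFU}(N/M)=1\le\dim_{\RFU}(N)$, so assume $c'\ge 2$. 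Fix a primitive $\bar z\in\sqrt{\ga_{c'}(N/M)}$ with $\dim_{\RFU}(N/M,\bar z)=\dim_{\RFU}(N/M)=:d$; note $N/M$ has tame residual dimension by hypothesis, so $d$ is well defined. I record two facts used throughout: $M=\sqrt{\ga_c(N)}\le Z(N)$ (standard for torsion free nilpotent groups, since a power of each element of $M$ lies in the central subgroup $\ga_c(N)$), and likewise $\sqrt{\ga_{c'}(N/M)}\le Z(N/M)$; consequently, for any lift $z\in N$ of $\bar z$ and any $g\in N$, the image of $[z,g]$ in $N/M$ is $[\bar z,\bar g]=1$, so $[z,g]\in M$.

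The heart of the argument is the case where some lift $z$ of $\bar z$ (equivalently every lift, since lifts differ by elements of $M\le Z(N)$) is noncentral in $N$. Choose $g$ with $w:=[z,g]\ne 1$, so $w\in M\setminus\{1\}$, and since $M$ is free abelian write $w=(z')^{\ell}$ with $z'\in M$ primitive and $\ell\ge 1$. I claim $\dim_{\RFU}(N,z')\ge d$; this suffices, since then $\dim_{\RFU}(N)\ge\dim_{\RFU}(N,z')\ge d=\dim_{\RFU}(N/M)$. Suppose instead $\delta(\RP_{N,z',i})>0$ for some $i<d$. For each of the positive-density many $p\in\RP_{N,z',i}$ with $p\nmid\ell$, fix a $p$-witness $\psi\colon N\to P$ of $z'$, so $|P|=p^i$ and, by Lemma~\ref{cyclic_p_witness}, $\langle\psi(z')\rangle\cong\Z/p\Z$; as $p\nmid\ell$ this gives $\psi(w)=\psi(z')^{\ell}\ne 1$, i.e. $[\psi(z),\psi(g)]\ne 1$. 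Let $\bar\psi\colon N/M\to P/\psi(M)$ be the induced map. If $\psi(z)\in\psi(M)$, say $\psi(z)=\psi(m)$ with $m\in M$, then $[\psi(z),\psi(g)]=\psi([m,g])=1$ since $M\le Z(N)$, a contradiction; so $\psi(z)\notin\psi(M)$ and $\bar\psi(\bar z)\ne 1$. Also $\psi(z')\in\psi(M)\setminus\{1\}$ forces $|\psi(M)|\ge p$, so $|P/\psi(M)|\le p^{i-1}$. Hence a positive-density set of primes admits a $p$-group quotient of $N/M$ of order at most $p^{i-1}$ on which $\bar z$ survives, so $\bu_{j<d}\RP_{N/M,\bar z,j}$ has positive density; but $d$ is the least index with $\delta(\RP_{N/M,\bar z,\cdot})>0$ and this is a finite union of density-zero sets --- a contradiction. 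Therefore $\dim_{\RFU}(N,z')\ge d$.

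The remaining case, in which every lift of the chosen maximizer $\bar z$ is central in $N$, is the anticipated obstacle. Writing $K$ for the preimage of $\sqrt{\ga_{c'}(N/M)}$ in $N$, the elements of $\sqrt{\ga_{c'}(N/M)}$ with central lifts form the subgroup $(K\cap Z(N))M/M$, which is proper: $K\supseteq\ga_{c'}(N)$, so $[K,N]\supseteq\ga_{c'+1}(N)\supseteq\ga_c(N)\ne 1$, hence $K\not\le Z(N)$. Thus noncentral lifts do exist in $\sqrt{\ga_{c'}(N/M)}$, just possibly not at a maximizer. To close the gap I would try to replace $\bar z$ by a perturbation $\bar z'=\bar z\cdot\bar h^{\,r}$ with $\bar h$ of noncentral lift, chosen so that $\bar z'$ avoids the central-lift subgroup yet still has $\dim_{\RFU}(N/M,\bar z')\ge d$; the guiding point is that a $p$-witness of $\bar z$ of order $p^j$ kills any class divisible by a sufficiently high power of $p$, so adding such a class does not alter which finite $p$-groups witness it. The step I expect to be genuinely hard is making this uniform in $p$: the ``sufficiently high power'' depends on $p$, so one must either restrict to a density-one set of primes carrying a common bound, or instead argue directly that a primitive central $z\in K\setminus M$ already pins down a primitive element of $M$ of upper residual dimension at least $d$, exploiting that $z^k\in\ga_{c'}(N)$ commutes with all generators in order to locate the required commutator obstruction inside $\ga_c(N)$ (and then invoking the step-length lower bound of Lemma~\ref{p_dim_step_length}). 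By contrast, once the structural facts about $M$ and about $\sqrt{\ga_{c'}(N/M)}$ being central are in hand, the noncentral case is routine bookkeeping.
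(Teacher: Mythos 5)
Your Case 1 (a maximizer $\bar z$ with a noncentral lift) is a clean and correct rendering of the paper's core argument: pass $\bar z$ to a primitive $z'\in M$ via a commutator, observe that a $p$-witness of $z'$ of order $p^i$ with $p\nmid\ell$ induces a witness of $\bar z$ on $P/\psi(M)$ of order at most $p^{i-1}$, and run the density argument. Your variant using $P/\psi(M)$ is actually slightly cleaner than the paper's stated $Q/\ga_c(Q)$, since the containment $\psi(M)\le\ga_c(Q)$ only holds after excluding finitely many primes, whereas $\psi(M)\le Z(Q)$ is automatic.

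But the obstacle you flag in Case 2 is genuine, and it is not resolved in your proposal. You gesture at a perturbation $\bar z'=\bar z\cdot\bar h^{\,r}$ and identify the uniformity-in-$p$ issue as the hard point, then stop; so as written the argument establishes $\dim_{\RFU}(N/M,\bar z)\le\dim_{\RFU}(N)$ only for those primitive $\bar z$ possessing a noncentral lift, which does not by itself yield the inequality for the maximum. What is worth recording is that you have actually put your finger on a step that the paper itself passes over: its proof writes down a primitive lift $b$ of $a$, picks $s$ with $b^s\in\ga_{c-1}(N)$, and then asserts ``Thus, there exists $g\in N$ such that $[b^s,g]$ is a nontrivial element of $\ga_c(N)$.'' That inference requires $b^s$ (equivalently $b$) to be noncentral, which is not automatic --- for instance, in $N=\Heis\times\mathcal{F}_4$ with $M=\sqrt{\ga_3(N)}$, the primitive element $a\in\sqrt{\ga_2(N/M)}$ coming from $Z(\Heis)\times\{1\}$ has only central lifts, and for it no such $g$ exists. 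So the paper's proof and yours share exactly the same gap; your proposal is the more honest of the two about it, but neither closes it.
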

\begin{proof}
Since the first statement is clear, we may assume that $N$ has step length $c > 1$. Let $a \in \sqrt{\ga_{c-1}(N/M)}$ be a primitive element. There exists a primitive element $b \in \sqrt{\ga_{c-1}(N)}$ such that $\pi_M(b) = a$. Since $b \in \sqrt{\ga_{c-1}(N)}$, there exists a natural number $s$ such that $b^s \in \ga_{c-1}(N)$. Thus, there exists an element $g \in N$ such that $[b^s,g]$ is a nontrivial element of $\ga_c(N)$. Hence, there exists some primitive element $x \in M$ and a natural number $k$ such that $x^k = [b^s,g]$. Let $p \in \UR_{N,x}$ be a prime number that does not divide $k$, and let $\map{\psi}{N}{Q}$ be a $p$-witness of $x$. Since $\text{Ord}_Q(\psi(x)) \nmid k$, we have that $\psi(x^k) \neq 1$. In particular, we have that $\psi([b^s,g]) \neq 1$, and therefore, $\psi(b) \notin \ga_c(Q)$. Thus, we have an induced homomorphism $\map{\bar{\psi}}{N/M}{Q/\ga_c(Q)}$ such that $\bar{\psi}(\pi_M(b)) \neq 1$. Therefore, it follows that $\bar{\psi}(a) \neq 1$, and thus, by definition, if $\varphi:N/M \to K$ is a finite $p$-witness for $a$, we have that $
|K| \leq |Q|= p^{\dim_{\RFU}(N,x)}.
$ Letting $A_i = \RP_{N/M,a,i} \cap \UR_{N,x}$ for $1 \leq i \leq \dim_{\RFU}(N,x)$, we see that the previous inequality implies that $\UR_{N,x} = \sqcup_{i=1}^{\dim_{\RFU}(N,x)}A_i$. If $\delta(\RP_{N/M,a,i}) = 0$ for each $i$, then we would have that $\delta(A_i) = 0$. In particular, we would have that $\delta(\UR_{N,x}) = \sum_{i=1}^{\dim_{\RFU}(N,x)} \delta(A_i) = 0$ which is a contradiction. Thus, there exists an index $1 \leq i_0 \leq \dim_{\RFU}(N/M,a)$ such that $\delta(\RP_{N/M,a,i_0}) > 0$. Thus,
$
\dim_{\RFU}(N/M,a) \leq \dim_{\RFU}(N,x) \leq \dim_{\RFU}(N).
$
This inequality holds for all primitive elements of $\sqrt{\ga_{c-1}(N/M)}$; hence, we have that 
$
\dim_{\RFU}(N/M) \leq \dim_{\RFU}(N).
$\end{proof}
We finish by giving an explicit inequality that relates the values $\dim_{\RFU}(N)$ and $\dim_{\RFL}(N)$.
\begin{prop}\label{upper_vs_lower}
	Suppose that $N$ is a torsion free, finitely generated nilpotent group that has tame residual dimension. Then 
	$
	\dim_{\RFL}(N) \leq \dim_{\RFU}(N).
	$
\end{prop}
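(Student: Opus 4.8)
The plan is to reduce the inequality to a statement about a single primitive element, where it becomes the elementary observation that a set of primes with positive natural density is infinite. First I would fix a primitive element $z \in \sqrt{\ga_c(N)}$, where $c$ is the step length of $N$, and recall from Lemma~\ref{f_p_witness} that for every prime $p$ there is a $p$-witness of $z$, necessarily of order $p^k$ for some $1 \leq k \leq h(N)$. Since the order of a $p$-witness of $z$ is well defined (Definition~\ref{residual_p_dimension}), the sets $\RP_{N,z,1},\dots,\RP_{N,z,h(N)}$ form a finite partition of $\mathbb{P}$.

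Next, because $N$ has tame residual dimension, by the inductive definition it has tame residual dimension at $z$, so each $\RP_{N,z,i}$ has a natural density. Finite additivity of natural density over the above partition gives $\sum_{i=1}^{h(N)} \delta(\RP_{N,z,i}) = \delta(\mathbb{P}) = 1$; in particular $\delta(\RP_{N,z,i_0}) > 0$ for at least one index $i_0$, so the upper residual dimension $\dim_{\RFU}(N,z)$ is indeed well defined. This is the one point that needs any care, and the partition-plus-finite-additivity argument is exactly what supplies it.

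The key step is then the inclusion of index sets: since any subset of $\mathbb{P}$ with positive natural density is infinite, $\set{ i : \delta(\RP_{N,z,i}) > 0 } \subseteq \set{ i : |\RP_{N,z,i}| = \infty }$, and hence the smallest element of the left-hand set is at least the smallest element of the right-hand set; that is, $\dim_{\RFL}(N,z) \leq \dim_{\RFU}(N,z)$. Finally I would take the maximum over all primitive $z \in \sqrt{\ga_c(N)}$: choosing $z_0$ realizing $\dim_{\RFL}(N) = \dim_{\RFL}(N,z_0)$ gives $\dim_{\RFL}(N) = \dim_{\RFL}(N,z_0) \leq \dim_{\RFU}(N,z_0) \leq \dim_{\RFU}(N)$. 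The abelian case is immediate from Lemma~\ref{abelian_tame}, which forces $\dim_{\RFL}(N) = \dim_{\RFU}(N) = 1$. Apart from the well-definedness issue noted above, the inequality is purely formal, so I do not expect a genuine obstacle here.
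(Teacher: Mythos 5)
Your proof is correct and follows essentially the same route as the paper's: both hinge on the observation that a set of primes with positive natural density is infinite, so the minimal index realizing positive density (defining $\dim_{\RFU}(N,z)$) is at least the minimal index realizing infinitude (defining $\dim_{\RFL}(N,z)$), and then one maximizes over primitive $z$. You spell out the well-definedness of $\dim_{\RFU}(N,z)$ via the partition of $\mathbb{P}$ by the sets $\RP_{N,z,i}$ and finite additivity of density, a point the paper leaves implicit in Definition~\ref{tame_residual_defn}, but the core argument is the same.
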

\begin{proof}
	Let $x$ be a primitive nontrivial element of $\sqrt{\ga_c(N)}$. Since $\delta(\UR_{N,x}) > 0$, we have that $|\UR_{N,x}| = \infty$. Thus, $\dim_{\RFL}(N,x) \leq \dim_{\RFU}(N,x)$. Hence, we have that $\dim_{\RFL}(N,x) \leq \dim_{\RFU}(N)$. Since this inequality holds for all primitive elements of $\sqrt{\ga_c(N)}$, we have that $\dim_{\RFL}(N) \leq \dim_{\RFU}(N)$.
\end{proof}
\subsection{Accessible residual dimension}
For a torsion free, finitely generated nilpotent group $N$ that has tame residual dimension, one may be interested in when $
\dim_{\RFL}(N) = \dim_{\RFU}(N)
$. In this case, we would be able to obtain a precise asymptotic characterization of the growth of residual finiteness of a finitely generated nilpotent group. Therefore, we introduce the following definition and proposition.
\begin{defn}\label{accessible_residual}
	Let $N$ be a torsion free, finitely generated nilpotent group of step length $c$ that has tame residual dimension. Let $x \in \sqrt{\ga_{c}(N)}$ be a primitive  element such that $\dim_{\RFU}(N,x) > 1$. We say that $N$ has \textbf{low dimensional vanishing at $x$} if $|\RP_{N,x,i}| < \infty$ for $1 \leq i < \dim_{\RFU}(N,x)$. If $\dim_{\RFU}(N,x) = 1$, we will always say that $N$ has low dimensional vanishing at $x$.
	
	Suppose that $N$ has step length $c = 1$. Since Lemma \ref{abelian_tame} implies that all  torsion free, finitely generated abelian groups have low dimensional vanishing for all primitive elements $x$, we will say that $N$ has \textbf{accessible residual dimension.}  Now suppose that $N$ has step length $c > 1$. If $N/ \sqrt{\ga_c(N)}$ has accessible residual dimension and $N$ has low dimension residual vanishing at all primitive elements $x \in \sqrt{\ga_c(N)}$, we say that $N$ has \textbf{accessible residual dimension}.

	If $N$ is an infinite, finitely generated nilpotent group such that $N / T(N)$ has accessible residual dimension, we say that $N$ has accessible residual dimension.
\end{defn}

We are able to associate a natural number to any infinite, finitely generated nilpotent group with accessible residual dimension that captures the degree of the polynomial in logarithm of the residual finiteness growth.
\begin{defn}
	Let $N$ be a torsion free, finitely generated nilpotent group that has accessible residual dimension. It is straightfoward to see that $\dim_{\RFL}(N) = \dim_{\RFU}(N)$. We denote their common value as $\dim_{\ARF}(N)$ and call this value the \textbf{accessible residual dimension of $N$}. When $N$ is an infinite, finitely generated nilpotent group with accessible residual dimension, we set $\dim_{\ARF}(N) \bdef \dim_{\ARF}(N / T(N))$.
\end{defn}

\subsection{Residual finiteness of nilpotent groups with torsion}
Before proceeding to the upper and lower bounds for residual finiteness of finitely generated nilpotent groups, we have the following proposition. This proposition and its proof are originally found in \cite[Proposition 4.4]{Pengitore_1}. It relates the effective residual finiteness of an infinite, finitely generated nilpotent group to its torsion free quotient. We produce the proof as it is short and for the sake of completeness.

\begin{prop}\label{torsion_to_torsion_free}
	Let $N$ be an infinite, finitely generated nilpotent group. Then
	$
	\Farb_{N}(n) \approx \Farb_{N / T(N)}(n).
	$
\end{prop}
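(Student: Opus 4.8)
The plan is to show the two inequalities $\Farb_N(n) \precsim \Farb_{N/T(N)}(n)$ and $\Farb_{N/T(N)}(n) \precsim \Farb_N(n)$ separately, exploiting that $T(N)$ is a finite characteristic subgroup. The easy direction is the second one: the projection $\map{\pi_{T(N)}}{N}{N/T(N)}$ sends a word of length $\leq n$ to a word of length $\leq n$, so if $g \in N$ does not lie in $T(N)$, any finite quotient of $N/T(N)$ separating $\pi_{T(N)}(g)$ from the identity pulls back to a finite quotient of $N$ of the same size separating $g$. The only elements not handled this way are the nontrivial elements of $T(N)$, but there are only finitely many of these and each has some finite depth $\D_N(g) < \infty$ (as $N$ is residually finite, being finitely generated nilpotent), so they contribute only a bounded additive constant to $\Farb_N(n)$. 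Hence $\Farb_N(n) \leq \max\{\Farb_{N/T(N)}(n), C\}$ for a constant $C$, giving one direction.

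For the direction $\Farb_{N/T(N)}(n) \precsim \Farb_N(n)$, fix a nontrivial $\bar{g} \in N/T(N)$ with $\|\bar{g}\| \leq n$; lift it to $g \in N$ with $\|g\| \leq n$ (a generating set for $N/T(N)$ can be taken as the image of one for $N$, and one chooses a shortest-word preimage). Then $g \notin T(N)$, so $g$ is nontrivial and $\D_N(g) \leq \Farb_N(n)$. Let $\map{\varphi}{N}{Q}$ be a finite quotient with $\varphi(g) \neq 1$ and $|Q| = \D_N(g)$. The obstacle is that $\varphi$ need not kill $T(N)$, so it does not directly descend to $N/T(N)$. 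The fix is to pass to the quotient $Q / \varphi(T(N))$: since $T(N)$ is finite, $\varphi(T(N))$ is a finite, and in fact normal (as $T(N) \nsub N$), subgroup of $Q$, so $N \to Q/\varphi(T(N))$ factors through $N/T(N)$, and its order is at most $|Q|$. The remaining point is to guarantee $\varphi(g) \notin \varphi(T(N))$ — i.e. that the descended map still detects $\bar{g}$. This is where one must work a little: a priori $\varphi(g)$ could land in $\varphi(T(N))$.

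To handle this, I would not take an arbitrary minimal $\varphi$ but instead build a suitable one. Since $N$ is finitely generated nilpotent and hence LERF (or more simply, since $T(N)$ is finite and $\bigcap_m N^m \cdot$(stuff) separates points), one can first choose a finite-index normal subgroup $K \nsub N$ with $K \cap T(N) = \{1\}$; then $T(N)$ embeds into $N/K$ and, more importantly, for the specific $\bar g$ one argues as follows. Choose a finite quotient $\map{\psi}{N/T(N)}{\bar Q}$ realizing $\D_{N/T(N)}(\bar g)$, so $|\bar Q| = \D_{N/T(N)}(\bar g)$ and $\psi(\bar g) \neq 1$; the point of the whole proposition is really to bound $\D_{N/T(N)}(\bar g)$ above by a constant times $\D_N(g')$ for some $g'$ of comparable length, which is the reverse of what one wants — so in fact the cleaner route is: compose with a fixed finite quotient $\map{\rho}{N}{N/L}$ that is injective on $T(N)$ (exists since $T(N)$ is finite and $N$ residually finite), and given a minimal witness $\varphi$ for $g$ in $N$, replace it by $(\varphi, \rho): N \to Q \times (N/L)$; this multiplies the order by the constant $|N/L|$, and now the image of $T(N)$ under $(\varphi,\rho)$ is a normal subgroup meeting $\langle$ image of $g\rangle$ trivially unless $g \in T(N)$ — wait, that still needs $\varphi(g)\notin\varphi(T(N))$.

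The genuinely clean argument, which I would commit to, is: reduce to showing $\D_{N/T(N)}(\bar g) \leq |T(N)| \cdot \D_N(g)$ for any lift $g$ of $\bar g$. Given $\map{\varphi}{N}{Q}$ minimal for $g$, set $\bar K = \varphi(T(N)) \nsub Q$ and consider $\map{\bar\varphi}{N/T(N)}{Q/\bar K}$. If $\bar\varphi(\bar g) \neq 1$ we are done with $|Q/\bar K| \leq |Q|$. If $\bar\varphi(\bar g) = 1$, then $\varphi(g) \in \bar K = \varphi(T(N))$, so there is $t \in T(N)$ with $\varphi(gt^{-1}) = 1$; but $gt^{-1}$ ranges over the finite set $gT(N)$ as $t$ varies, and $gt^{-1} \neq 1$ for all such $t$ would be false only if $g \in T(N)$ — since $g \notin T(N)$, every element of $gT(N)$ is nontrivial. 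Now here is the key: among the $|T(N)|$ nontrivial elements $\{gt^{-1} : t \in T(N)\}$, at least one, call it $g'$, has $\D_N(g') \geq \D_N(g)$ trivially, but more usefully we take the \emph{product} construction: let $\varphi' = \prod_{t\in T(N)} \varphi_t$ where $\varphi_t$ is a minimal witness for $gt^{-1}$; then $\varphi'$ kills nothing in $gT(N)$... this is getting long, and the honest statement is that this lemma is cited from \cite[Proposition 4.4]{Pengitore_1}, so I would simply invoke that proof's structure: use that $T(N)$ is finite characteristic, that both groups are residually finite, and a finite-product-of-witnesses trick to clear $T(N)$ at the cost of a multiplicative constant depending only on $|T(N)|$, concluding $\Farb_N(n) \approx \Farb_{N/T(N)}(n)$. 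The main obstacle, as flagged, is ensuring the descended quotient still separates $\bar g$; the resolution is the product-of-witnesses over the finite coset $gT(N)$, which costs only the constant factor $|T(N)|!$ or similar, absorbed by $\approx$.
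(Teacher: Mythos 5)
Your easy direction, $\Farb_{N}(n) \precsim \Farb_{N/T(N)}(n)$, is correct: witnesses for $\pi_{T(N)}(g)$ pull back, and the finitely many nontrivial torsion elements contribute only a bounded additive constant. The problem is the reverse direction, and there your argument has a genuine gap which you yourself flag and ultimately punt on. The ``product of witnesses over the coset $gT(N)$'' does produce a quotient of $N$ separating $g$ from every element of $T(N)$ and hence one that descends to a quotient of $N/T(N)$ separating $\bar g$, but its order is bounded only by $\prod_{t \in T(N)} \D_N(gt^{-1}) \leq \Farb_N(n+C)^{|T(N)|}$. That is an \emph{exponentiation} of the witness size, not a multiplicative constant: the relation $\precsim$ as defined in this paper allows $f(n) \leq C\,g(Cn)$ but not $f(n) \leq g(Cn)^{C}$, and $(\log n)^{k}$ is not $\approx$-equivalent to $(\log n)^{k\,|T(N)|}$ once $|T(N)|>1$. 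So what your construction actually yields is $\Farb_{N/T(N)}(n) \precsim \Farb_N(n)^{|T(N)|}$, which is strictly weaker than the proposition, and the closing remark that the cost is ``only the constant factor $|T(N)|!$'' is wrong.

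The paper avoids this difficulty by a different route: it inducts on $|T(N)|$. When $T(N)\neq 1$, the nilpotent group $N$ has a nontrivial \emph{finite central} subgroup $K\leq T(N)$ (the paper writes $Z(T(N))$; one should really take $K = T(N)\cap Z(N)$, which is nontrivial because every nontrivial normal subgroup of a nilpotent group meets the center). It then invokes \cite[Lemma 2.4]{Bou_Rabee_Kaletha}, which says $\Farb_G(n)\approx\Farb_{G/K}(n)$ whenever $G$ is finitely generated linear and $K$ is finite central, and closes the induction via $T(N/K)\cong T(N)/K$, which has strictly smaller order. The step you got stuck on---descending a witness through a finite normal subgroup at only constant cost---is exactly what that lemma handles, and it genuinely exploits centrality together with linearity of $N$; there is no purely formal argument using only that $T(N)$ is finite and normal, which is why your attempt kept circling.
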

\begin{proof}
	We proceed by induction on $|T(N)|$, and note that the base case is clear. Hence, we may assume that $|T(N)| > 1$. We have that the group morphism $\map{\pi_{Z(T(N))}}{N}{N / Z(T(N))}$ is surjective with kernel given by $Z(T(N))$ which is a finite central subgroup. Since finitely generated nilpotent groups are linear,  \cite[Lemma 2.4]{Bou_Rabee_Kaletha} implies that
	$
	\Farb_{N}(n) \approx \Farb_{N/Z(T(N))}(n).
	$
	Since 
	$
	(N / Z(T(N))) / T(N / Z(T(N))) \cong N / T(N),
	$
	the induction hypothesis implies that
	$
	\Farb_{N}(n) \approx \Farb_{N / T(N)}(n).
	$
\end{proof}

With the above proposition, we may prove the following theorem.
\begin{thmn}[\ref{torsion_free_abelian}]
	Let $N$ be an infinite, finitely generated nilpotent group such that $N/T(N)$ is abelian. Then
	$$
	\Farb_{N}(n) \approx \log(n).
	$$
\end{thmn}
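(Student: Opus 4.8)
The plan is to peel off the torsion, reduce to the cyclic group $\Z$, and finish with two elementary Chebyshev-type estimates on least common multiples.

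First I would apply Proposition~\ref{torsion_to_torsion_free} to replace $N$ by $N/T(N)$, which is an infinite, torsion-free, finitely generated abelian group, hence isomorphic to $\Z^r$ for some $r\geq 1$. Working with the standard generating set of $\Z^r$, two observations reduce everything to $\Z$. On one hand, a nontrivial $g\in\Z^r$ with $\|g\|\leq n$ has some coordinate $\alpha_i\neq 0$ with $|\alpha_i|\leq n$, and composing the coordinate projection $\rho_i\co\Z^r\to\Z$ with a finite quotient of $\Z$ separating $\alpha_i$ from $0$ separates $g$; hence $\D_{\Z^r}(g)\leq\D_{\Z}(\alpha_i)\leq\Farb_{\Z}(n)$, so $\Farb_{\Z^r}(n)\leq\Farb_{\Z}(n)$. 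On the other hand, for $g$ in the first coordinate subgroup $\langle e_1\rangle\cong\Z$ (where $\|g\|_{\Z^r}=|g|$), restricting any surjection $\Z^r\to Q$ with $\varphi(g)\neq 1$ to $\langle e_1\rangle$ produces a finite quotient of $\Z$ of no larger order separating $g$, so $\Farb_{\Z}(n)\leq\Farb_{\Z^r}(n)$. Combined with generating-set independence, $\Farb_{N}(n)\approx\Farb_{\Z^r}(n)\approx\Farb_{\Z}(n)$, and it remains to show $\Farb_{\Z}(n)\approx\log n$.

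Next I would compute $\D_\Z$ directly: a surjection $\Z\to Q$ onto a finite group forces $Q$ cyclic of some order $d$, and it separates $m$ from $0$ precisely when $d\nmid m$, so $\D_\Z(m)=\min\set{d\geq 2:d\nmid m}$. For the upper bound, if every prime $p\leq y$ divided a nonzero $m$ with $|m|\leq n$, then $\prod_{p\leq y}p\mid m$, so $\vartheta(y)=\sum_{p\leq y}\log p\leq\log n$; since $\vartheta(y)\gg y$ by Chebyshev, taking $y\asymp\log n$ forces a prime $p\leq y$ with $p\nmid m$, whence $\D_\Z(m)\leq p\leq C\log(Cn)$ and $\Farb_\Z(n)\precsim\log n$ (one could instead quote \cite[Corollary 2.3]{Bou_Rabee10}, exactly as in the proof of Proposition~\ref{counter_example_first_thm_first_paper}). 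For the lower bound, set $m_i=\lcm\set{1,\dots,i}$; every $d\leq i$ divides $m_i$, so $\D_\Z(m_i)\geq i+1$, while $\log m_i=\psi(i)=\sum_{p^k\leq i}\log p\leq Ci$ by Chebyshev's upper bound, giving $\D_\Z(m_i)\succsim\log m_i$. Since $m_{i+1}/m_i$ is either $1$ or a prime $\leq i+1$ and $\log(i+1)=o(\psi(i))$, one has $\log m_{i+1}\leq 2\log m_i$ for $i$ large; thus for large $n$, choosing $i$ with $m_i\leq n<m_{i+1}$ gives $\Farb_\Z(n)\geq\D_\Z(m_i)\succsim\log m_i\geq\tfrac12\log m_{i+1}>\tfrac12\log n$, i.e. $\log n\precsim\Farb_\Z(n)$.

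I do not expect a genuine obstacle: the only mathematical content is the two Chebyshev estimates $\vartheta(y)\gg y$ and $\psi(i)\ll i$, plus the bookkeeping needed to move between $\Z^r$, $\Z$, and different generating sets, and to interpolate $\Farb_\Z$ between the sparse values $m_i$. The one point requiring mild care is checking that the lower-bound sequence $\set{m_i}$ is dense enough on the logarithmic scale, which is exactly what the inequality $\log m_{i+1}\leq 2\log m_i$ supplies.
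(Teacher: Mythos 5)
Your proof is correct, and its first step is identical to the paper's: apply Proposition~\ref{torsion_to_torsion_free} to replace $N$ by the torsion-free abelian group $N/T(N)$. Where you diverge is that the paper then simply cites \cite[Corollary 2.3]{Bou_Rabee10} for $\Farb_{N/T(N)}(n)\approx\log n$ and stops, whereas you unpack that citation into a self-contained argument. Your reduction from $\Z^r$ to $\Z$ (coordinate projections for the upper bound, restriction to a coordinate subgroup for the lower), the identification $\D_\Z(m)=\min\set{d\geq 2 : d\nmid m}$, and the two Chebyshev estimates $\vartheta(y)\gg y$ and $\psi(i)\ll i$ are all correct, and the one genuinely delicate point — interpolating the lower bound between the sparse witnesses $m_i=\lcm\set{1,\dots,i}$ — you handle properly via the observation that $\log m_{i+1}\leq 2\log m_i$ for large $i$. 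What this buys is independence from the external reference at the cost of a paragraph of elementary prime-counting; it is also worth noting that the $\lcm$ device you introduce here is essentially the same mechanism that drives the lower bound in the paper's proof of Theorem~\ref{lower_bound_rf_result}, where the test elements $x^{k\,m_j}$ are built from $m_j=\pr{\lcm\set{1,\dots,p_j-1}}^{\dim_{\RFL}(N)+1}$, so your argument usefully previews that later construction in the simplest case.
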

\begin{proof}
	Proposition \ref{torsion_to_torsion_free} implies that $\Farb_N(n) \approx \Farb_{N/T(N)}(n)$. We also have that \cite[Corollary 2.3]{Bou_Rabee10} implies $\Farb_{N/T(N)}(n) \approx \log(n)$. Therefore,
	$
	\Farb_{N/T(N)}(n) \approx \log(n).
	$
\end{proof}
\section{Lower bounds for residual finiteness of finitely generated nilpotent groups}\label{lower_bound_section}
We restate Theorem \ref{lower_bound_rf_result} for the convenience fo the reader. 
\begin{thmn}[\ref{lower_bound_rf_result}]
	Let $N$ be an infinite, finitely generated nilpotent group such that $N / T(N)$ has step length $c > 1$. There exists a natural number $\dim_{\RFL}(N)$ such that $\dim_{\RFL}(N) \geq c + 1$ and where $$
	\pr{\log(n)}^{\dim_{\RFL}(N)} \precsim \Farb_{N}(n).
	$$
\end{thmn}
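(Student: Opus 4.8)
The plan is to reduce everything to the torsion-free case and then exhibit a single primitive element together with an infinite family of its powers whose depth is forced to be large. First, by Proposition \ref{torsion_to_torsion_free} we have $\Farb_{N}(n) \approx \Farb_{N/T(N)}(n)$, so after replacing $N$ by $N/T(N)$ we may assume $N$ is torsion free of step length $c>1$; note also that $\dim_{\RFL}(N)$ was \emph{defined} to equal $\dim_{\RFL}(N/T(N))$, so the invariant is unchanged. By Proposition \ref{lower_bound_residual_dimension} we already know $\dim_{\RFL}(N)\geq c+1$, so only the asymptotic lower bound remains to be proved.

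Next I would unwind the definition of $\dim_{\RFL}(N)$: choose a primitive element $z\in\sqrt{\ga_c(N)}$ realizing the maximum, so $\dim_{\RFL}(N)=\dim_{\RFL}(N,z)=t_0$, and recall $\LR_{N,z}=\RP_{N,z,t_0}$ is infinite while $\RP_{N,z,i}$ is finite for $i<t_0$. The key point is a \emph{lower} bound for depth: for a prime $p\in\LR_{N,z}$, any finite $p$-group quotient separating $z$ (or a $p'$-power of $z$) must have order at least $p^{t_0}$ by the very definition of a $p$-witness, and one must further argue that non-$p$-group quotients cannot do better asymptotically. The standard device (as in \cite{Pengitore_1, Bou_Rabee10}) is: if $\map{\varphi}{N}{Q}$ is any finite quotient with $\varphi(z^m)\neq 1$, factor through a Sylow-type analysis — passing to the quotient by the $p'$-part of the image — to produce a $p$-group quotient separating $z^m$, hence of order $\geq p^{t_0}$ whenever $p\mid$ (order of $\varphi(z^m)$) and $p\in\LR_{N,z}$. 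So I would prove a lemma: there is a constant $D$ (depending on $z$) such that for every $m$ coprime to $D$ and every prime $p\in\LR_{N,z}$ with $p\nmid m$, one has $\D_N(z^m)\geq p^{t_0}$ as soon as $p\mid m$ is \emph{not} required — rather, the cleanest route is to pick $m=m_i$ to be a product making the minimal separating prime land in $\LR_{N,z}$.

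Concretely, here is the construction of the sequence $\{x_i\}$. Enumerate $\LR_{N,z}=\{q_1<q_2<\cdots\}$; since $\bigcup_{i<t_0}\RP_{N,z,i}$ is finite, let $D$ be the product of all primes in it together with the primitivity constant $k$ with $z^k\in\ga_c(N)$. For each $j$ set $m_j=\prod_{q\in\LR_{N,z},\, q<q_j} q$ (a primorial over $\LR_{N,z}$) and $x_j=z^{m_j}$. Then any finite quotient separating $x_j$ must use a prime dividing the order of the image of $x_j$; I must show this prime can be taken in $\LR_{N,z}$ and $\geq q_j$, forcing $\D_N(x_j)\geq q_j^{t_0}$. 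On the other hand, by Proposition \ref{coord_bound} the Mal'tsev coordinates of $z$ are polynomially bounded in $\|z\|$, whence $\|x_j\|=\|z^{m_j}\|\leq C\,m_j^{1/c}$ by the standard distortion estimate \cite[3.B2]{Gromov}, so $\log\|x_j\|\preceq \log m_j \asymp q_j$ by the Prime Number Theorem applied along $\LR_{N,z}$ (using $\delta$ or at least positive upper density is \emph{not} needed here — only that $\LR_{N,z}$ is infinite, giving $q_j\leq$ (some explicit function), but to get the clean power one uses that consecutive primes and primorials satisfy $\log m_j\asymp q_j$, which follows from PNT for the full set of primes as an \emph{upper} bound $q_j\preceq\log m_j\cdot(\text{something})$ — care is needed). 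Combining, $\D_N(x_j)\geq q_j^{t_0}\succeq (\log\|x_j\|)^{t_0}=(\log\|x_j\|)^{\dim_{\RFL}(N)}$, which along the sequence $\|x_j\|\to\infty$ yields $(\log n)^{\dim_{\RFL}(N)}\precsim \Farb_N(n)$.

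The main obstacle is the lower bound on $\D_N(z^m)$: one must show that an \emph{arbitrary} finite quotient separating a high power $z^m$ cannot be smaller than $p^{t_0}$ for an appropriate large $p\in\LR_{N,z}$. This requires (i) reducing a general finite quotient to a nilpotent, then a $p$-group, quotient without losing too much — factor out the subgroup generated by elements of order coprime to the relevant prime and invoke that the image of $z^m$ survives for at least one prime $p$ dividing $\mathrm{Ord}(\varphi(z^m))$; and (ii) arranging, via the choice of $m_j$ as a primorial over $\LR_{N,z}$ and $D$ absorbing the finitely many "bad" primes in $\bigcup_{i<t_0}\RP_{N,z,i}$, that every prime $p$ dividing $\mathrm{Ord}(\varphi(z^{m_j}))$ either lies in $\LR_{N,z}$ with $p\geq q_j$ (good: depth $\geq q_j^{t_0}$) or divides $m_j$ (impossible, since then $\varphi(z^{m_j})$ would have that prime to a bounded power forcing the prime to still be large). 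Pinning down this case analysis — and making the relation $q_j\asymp\log m_j$ precise via the Prime Number Theorem — is where the real work lies; everything else is bookkeeping with the invariants defined in \S\ref{residual_set_up} and the coordinate bounds of Proposition \ref{coord_bound}.
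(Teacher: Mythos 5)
Your high-level strategy matches the paper: pass to $N/T(N)$, pick a primitive $z\in\sqrt{\ga_c(N)}$ realizing $\dim_{\RFL}(N,z)=\dim_{\RFL}(N)=t_0$, and take a sequence of powers of $z$ whose depths are forced to grow like $p^{t_0}$ along primes $p\in\LR_{N,z}$. The gap is your choice of exponents. You set $m_j=\prod_{q\in\LR_{N,z},\,q<q_j}q$, a squarefree primorial supported only on the sparse set $\LR_{N,z}$, and that number does not kill the dangerous quotients. Concretely, let $q$ be any small prime lying outside both the finite set $A=\bigcup_{i<t_0}\RP_{N,z,i}$ and the set $\LR_{N,z}$; such a $q$ lies in $\RP_{N,z,i}$ for some fixed $i\geq t_0$, so a $q$-witness $\varphi\colon N\to Q$ of $z$ has $|Q|=q^i$ and, by Lemma \ref{cyclic_p_witness}, $\text{Ord}_Q(\varphi(z))=q$. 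Since $q\nmid m_j$ for every $j$, we get $\varphi(z^{m_j})\neq 1$ and hence $\D_N(z^{m_j})\leq q^i$, a constant independent of $j$. Your sequence therefore has bounded depth and yields no lower bound at all. There is a second failure mode even for primes $p\in\LR_{N,z}$ with $p<q_j$: a non-witness $p$-group quotient can have $\text{Ord}(\varphi(z))=p^2$, and $p^2\nmid m_j$ because $m_j$ is squarefree, so again $z^{m_j}$ survives in a quotient whose size is not controlled by $q_j^{t_0}$.

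The paper avoids both issues by taking $m_j=\bigl(\lcm\{1,\dots,p_j-1\}\bigr)^{\dim_{\RFL}(N)+1}$ rather than a primorial. This is designed so that every prime power $q^{\la}<p_j^{\dim_{\RFL}(N)}$ with $q<p_j$ divides $m_j$. The proof then reduces, via \cite[Theorem 2.7]{Hall_notes}, to $q$-group quotients of order $q^{\la}<p_j^{\dim_{\RFL}(N)}$ and splits into four cases: $q^{\la}<p_j$; $q<p_j<q^{\la}<p_j^{\dim_{\RFL}(N)}$; $q>p_j$ with $s_q\geq\dim_{\RFL}(N)$; and $q>p_j$ with $s_q<\dim_{\RFL}(N)$, the last being excluded by choosing $p_j>\max A$. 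The extra exponent $\dim_{\RFL}(N)+1$ on the $\lcm$ is precisely what makes the second case work, and the Prime Number Theorem still gives $\log m_j\approx p_j$. If you replace your primorial by this $\lcm$-power and carry out the four-case analysis, the rest of your plan goes through; as written, the crucial middle step cannot be completed.
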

\begin{proof}

We start by assuming that $N$ is torsion free which implies that $N$ has step length $c > 1$. Let $x \in \sqrt{\ga_{c}(N)}$ be an element $N$ such that $\dim_{\RFL}(N,x) = \dim_{\RFL}(N)$, and let $k$ be the minimal natural number satisfying $x^k \in \ga_{c}(N)$. We may choose a finite Mal'tsev generating subset  $\set{x_i}_{i=1}^{t}$ for $N$  such that $x_1 = x$. Proposition \ref{lower_bound_residual_dimension} implies that $\dim_{\RFL}(N,x) \geq c + 1$, and the definition of $\dim_{\RFL}(N,x)$ implies that the set 
	$
	A = \bigcup_{i=1}^{\dim_{\RFL}(N,x) - 1}\RP_{N,x,i}
	$
	is finite. Thus, we may write $A = \set{q_1 < q_2 < \cdots < q_\ell}$ where $q_i$ are prime numbers for all $i$. Let $\set{p_j}_{j=1}^{\infty}$ be an enumeration of the set  $\set{p \in \LR_{N,x} \: | \:  p > \text{max}\set{q_\ell,k}}$, and let $m_j = \pr{\lcm\set{1,\cdots, p_{j} - 1}}^{\dim_{\RFL}(N) + 1}.$ We claim that $\{x^{k \: m_j}\}_{j=1}^{\infty}$ is the desired sequence. That implies we must show that
	$
	\D_N(x^{k \: m_j}) \approx \pr{\log(\|x^{k \: m_j}\|)}^{\dim_{\RFL}(N)}
	$
for all $j$.

We see \cite[3.B2]{Gromov} implies that 
	$
	\|x^{k \: m_j}\| \approx  m_j^{1/c},
	$
	and additionally, the Prime Number Theorem \cite[1.2]{Tenenbaum} implies that $\log(m_j) \approx p_{j}$. Subsequently, $\log(\|x^{k \: m_j}\|) \approx p_{j}$, and thus, $\pr{\log(\|x^{k \: m_j}\|)}^{\dim_{\RFL}(N)} \approx p_{j}^{\dim_{\RFL}(N)}.$ That implies we have two tasks. We first need to demonstrate that there exists a surjective group morphism $\map{f}{N}{P}$ to a finite group $P$ such that $|P| = p_{j}^{\dim_{\RFL}(N)}$ and where $f(x^{k \: m_j}) \neq 1$. Secondly, we need to demonstrate that if given a surjective group morphism $\map{\varphi}{N}{Q}$ to a finite group where $|Q| < p_{j}^{\dim_{\RFL}(N)}$, then $\varphi(x^{k \: m_j}) = 1$.
	
	Let $\map{\psi_j}{N}{P_j}$ be a $p_j$-witness of $x$. By definition, $\psi_j(x) \neq 1$, $|P_j| = p_{j}^{\dim_{\RFL}(N)}$, and $\text{Ord}_P(\psi_j(x)) = p_{j}$. Since $p_{j}^t \nmid k \: m_j$, we have that $\psi(x^{k \: m_j}) \neq 1$ as desired.
	
	Now suppose that $\map{\varphi}{N}{Q}$ is a surjective group morphism to a finite group where $|Q| < p_{j}^{\dim_{\RD}(N)}$. Since $\varphi(x) = 1$ implies that $\varphi(x^{k \: m_j}) = 1$, we may assume $\varphi(x) \neq 1$. Result \cite[Theorem 2.7]{Hall_notes} implies we may assume that $|Q| = q^\la$ where $q$ is some prime number.  For notational simplicity, we let $s_q$ be the natural number such that a $q$-witness of $x$ has order $q^{s_q}$.
	
If $q^\la < p_{j}$, then by construction, we know that $|Q| \mid m_j$, and since the order of an element divides the order of the group, we have that $\text{Ord}_Q(\varphi(x)) \mid m_j$. Therefore, $\varphi(x^{k \: m_j}) = 1$.
	
If $q < p_{j}$ and $p_{j} < q^\la < p_{j}^{\dim_{\RFL}(N)}$, then there exists a natural number $\nu$ such that 
	$
	q^\nu < p_{j} < q^{\nu + 1}.
	$
	Thus, we have that
	$$
	q^{\nu \: \dim_{\RFL}(N)} < p_{j}^{\dim_{\RD}(N)} < q^{(\nu + 1) \: \dim_{\RFL}(N)}.
	$$
	Subsequently, we may write $\la = \nu \: \ell + r$ where $\ell \leq \dim_{\RFL}(N)$ and $0 \leq r < \nu$.  By assumption, $q^\nu < p_{j}$. Therefore, we must have that $q^\nu \mid \lcm\set{1,\cdots, p_{j} - 1}$ and thus,  $$(q^\nu)^{\ell} \mid \pr{\lcm\set{1,\cdots, p_{i_j}-1}}^{\dim_{\RFL}(N)}.$$ Hence, $q^\la \mid m_j$. Since the order of $\varphi(x)$ divides $|Q|$, we have that $\varphi(x^{k \: m_j}) = 1$.
	
	Now suppose that $q > p_{j}$ and that $s_q \geq \dim_{\RFL}(N)$. Since $\varphi(x) \neq 1$, we have that $\la \geq s_q$. In particular, we have that 
	$$
	|Q| = q^\la \geq q^{s_q} \geq q^{\dim_{\RFL}(N)} \geq p_{j}^{\dim_{\RFL}(N)}.
	$$
	Hence, we may disregard this case.
	
	For the final case, we may assume that $q > p_j$ and $s_q< \dim_{\RFL}(N)$. By definition, $q \in A$; however, by the choice of prime numbers $p_j$ we have that $p_j > q$, which is a contradiction. Therefore, this case is not possible, and we may ignore it.
	
	Therefore, $\D_N(x^{k \: m_j}) \approx \pr{\log(\|x^{k \: m_j}\|)}^{\dim_{\RFL}(N)}$, and thus, $\pr{\log(n)}^{\dim_{\RFL}(N)} \precsim \Farb_{N}(n).$ Additionally, Proposition \ref{lower_bound_residual_dimension} implies that $\dim_{\RFL}(N) \geq c + 1$. 
	
	When $N$ is an infinite, finitely generated nilpotent group where $|T(N)|>1$, we have by the above arguments that $\pr{\log(n)}^{\dim_{\RFL}(N / T(N))}  \precsim \Farb_{N / T(N)}(n)$. We also have that $\dim_{\RFL}(N / T(N)) \geq c + 1$ where $c$ is the step length of $N/T(N)$.  Proposition \ref{torsion_to_torsion_free} implies that $\Farb_{N}(n) \approx \Farb_{N / T(N)}(n)$; moreover, we have that $\dim_{\RFL}(N) = \dim_{\RFL}(N / T(N))$. Therefore, $\dim_{\RFL}(N) \geq c + 1$ and 
	$
	\pr{\log(n)}^{\dim_{\RFL}(N)} \precsim \Farb_{N}(n). 
	$
\end{proof}

\section{Upper bounds for residual finiteness for finitely generated nilpotent groups}\label{upper_bounds_section}
The main goal of this section is to prove the following theorem.
\begin{thmn}[\ref{upper_bounds_residual_function}]
	Let $N$ be an infinite, finitely generated nilpotent group.Then
	$$
	\Farb_N(n) \precsim \pr{\log(n)}^{\psi_{\RD}(N)}.
	$$ Now suppose that $N$ has tame residual dimension. Then there exists a natural number $\dim_{\RFU}(N)$ satisfying $\dim_{\RFU}(N) \leq \psi_{\RD}(N)$ such that 
	$$
	\Farb_{N}(n) \precsim \pr{ \log( n)}^{\dim_{\RFU}(N)}.
	$$ 
	If $\dim_{\RFU}(N) < \psi_{\RD}(N),$ then $\Farb_{N}(n)$ grows strictly slower than what is predicted by \cite[Theorem 1.1]{Pengitore_1}.
\end{thmn}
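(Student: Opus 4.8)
The plan is to prove all three assertions in one sweep over nontrivial $g\in N$ with $\|g\|\le n$, where the only thing that changes between the original bound and the improved bound is the \emph{pool of primes} we are allowed to use when building a finite quotient that detects $g$. By Proposition \ref{torsion_to_torsion_free} we have $\Farb_N(n)\approx\Farb_{N/T(N)}(n)$, and $\psi_{\RD}(N)=\psi_{\RD}(N/T(N))$, $\dim_{\RFU}(N)=\dim_{\RFU}(N/T(N))$ by definition, so I would first reduce to $N$ torsion free of step length $c$; when $c=1$ this is already Theorem \ref{torsion_free_abelian} (and $\psi_{\RD}(N)=\dim_{\RFU}(N)=1$), so one may assume $c\ge 1$ throughout. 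Fix a Mal'tsev basis $\set{x_i}_{i=1}^{h(N)}$ of $N$ and write $g=\prod_i x_i^{\al_i}$; Proposition \ref{coord_bound} gives $|\al_i|\le C\,n^{t_i}\le C\,n^{c}$, so each Mal'tsev coordinate of $g$ is bounded by a fixed power of $n$ and in particular has $O(\log n/\log\log n)$ distinct prime factors.

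Next I would pass to a quotient where $g$ looks like a central element. Choose $j$ maximal with $g\in\sqrt{\ga_j(N)}$ (so $1\le j\le c$ and $g\notin\sqrt{\ga_{j+1}(N)}$) and set $\bar N=N/\sqrt{\ga_{j+1}(N)}$. Using $\sqrt{\ga_j(N)}=\sqrt{\ga_j(N)\sqrt{\ga_{j+1}(N)}}$ one checks that $\bar N$ is torsion free of step length exactly $j$, that $\bar g\neq 1$, and that $\bar g$ lies in $\sqrt{\ga_j(\bar N)}=\sqrt{\ga_j(N)}/\sqrt{\ga_{j+1}(N)}$, the isolator of the last term of the lower central series of $\bar N$; since $\ga_j(\bar N)\le Z(\bar N)$ and $Z(\bar N)$ is isolated, $\sqrt{\ga_j(\bar N)}$ is a free abelian subgroup of $Z(\bar N)$. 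Writing $\bar g=\prod\bar z_i^{a_i}$ in a $\Z$-basis $\set{\bar z_i}$ of $\sqrt{\ga_j(\bar N)}$ compatible with a Mal'tsev basis of $\bar N$, the $a_i$ still satisfy $|a_i|\le C'n^{c}$, and I fix $i_0$ with $a_{i_0}\neq 0$; as a basis vector of an isolated subgroup, $\bar z_{i_0}$ is a primitive element.

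Now comes the $p$-witness step. For any prime $p\nmid a_{i_0}$, Proposition \ref{p_dimension_basis} shows that a $p$-witness $\map{\varphi}{\bar N}{Q}$ of $\bar z_{i_0}$ satisfies $\varphi(\bar g)\neq 1$, so $\D_N(g)\le|Q|=p^{k}$ with $p^k$ the $p$-witness order of $\bar z_{i_0}$. For the \emph{original} bound I use that, by \cite[Theorem 1.1]{Pengitore_1} and its constituent computations, for all but finitely many $p$ this exponent is at most $\psi_{\RD}(\bar N)$, and $\psi_{\RD}(\bar N)\le\psi_{\RD}(N)$ by the monotonicity of $\psi_{\RD}$ under quotienting by the isolator of the last lower-central term (the analogue of Proposition \ref{residual_dim_lower_step_quotient}, from \cite{Pengitore_1}, applied iteratively down the series); since $\prod_{p\le x}p=e^{(1+o(1))x}$ there is a prime $p\le K\log|a_{i_0}|=O(\log n)$ avoiding the finite exceptional set and not dividing $a_{i_0}$, whence $\D_N(g)\le(K'\log n)^{\psi_{\RD}(N)}$ and $\Farb_N(n)\precsim(\log n)^{\psi_{\RD}(N)}$. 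For the \emph{improved} bound, assuming $N$ has tame residual dimension — a property $\bar N$ inherits directly from its recursive definition, so that $\dim_{\RFU}(\bar N,\bar z_{i_0})\le\dim_{\RFU}(\bar N)\le\dim_{\RFU}(N)$ by iterating Proposition \ref{residual_dim_lower_step_quotient} — I instead draw $p$ from the positive-density set $\UR_{\bar N,\bar z_{i_0}}$, on which the $p$-witness of $\bar z_{i_0}$ has order exactly $p^{\dim_{\RFU}(\bar N,\bar z_{i_0})}$; since $\sum_{p\le x,\,p\in\UR_{\bar N,\bar z_{i_0}}}\log p=(\delta+o(1))x$, there is again a prime $p\in\UR_{\bar N,\bar z_{i_0}}$ of size $O(\log n)$ not dividing $a_{i_0}$, giving $\D_N(g)\le(K''\log n)^{\dim_{\RFU}(N)}$ and $\Farb_N(n)\precsim(\log n)^{\dim_{\RFU}(N)}$. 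The inequality $\dim_{\RFU}(N)\le\psi_{\RD}(N)$ follows since $\bigcup_{i\le\psi_{\RD}(N,z)}\RP_{N,z,i}$ omits only finitely many primes for each primitive $z\in\sqrt{\ga_c(N)}$, so some term of this finite union has positive density and $\dim_{\RFU}(N,z)\le\psi_{\RD}(N,z)$; maximizing over $z$ gives the claim.

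The third statement is then formal: if $\dim_{\RFU}(N)\lneq\psi_{\RD}(N)$ then $\Farb_N(n)\precsim(\log n)^{\dim_{\RFU}(N)}$ with $\dim_{\RFU}(N),\psi_{\RD}(N)$ distinct natural numbers, so $(\log n)^{\dim_{\RFU}(N)}\not\approx(\log n)^{\psi_{\RD}(N)}$; an equivalence $\Farb_N(n)\approx(\log n)^{\psi_{\RD}(N)}$ would force $(\log n)^{\psi_{\RD}(N)}\precsim(\log n)^{\dim_{\RFU}(N)}$, which is false, so $\Farb_N(n)$ grows strictly slower than the prediction of \cite[Theorem 1.1]{Pengitore_1}. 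The step I expect to fight hardest with is the passage to $\bar N$ together with the bookkeeping it entails: verifying that $\bar g$ genuinely lands in the isolator of the last lower-central term of $\bar N$ with a primitive basis vector carrying a nonzero, polynomially bounded coordinate, that tame residual dimension really descends to every quotient $N/\sqrt{\ga_{j+1}(N)}$, and — most importantly — that the monotonicity of both $\psi_{\RD}$ and $\dim_{\RFU}$ along these quotients is in force, so that the logarithmic-size prime from the correct density-positive pool exists uniformly in $n$ and not merely asymptotically.
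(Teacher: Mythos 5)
Your proposal is essentially the paper's argument with the induction on step length unrolled into a single jump: where the paper inducts on $c$ and handles the case $\pi_{\sqrt{\ga_c(N)}}(g)\neq 1$ by recursing on $N/\sqrt{\ga_c(N)}$, you go directly to $\bar N = N/\sqrt{\ga_{j+1}(N)}$ with $j$ chosen so that $\bar g$ is a nontrivial element of $\sqrt{\ga_j(\bar N)}\le Z(\bar N)$. The tools are identical: Proposition \ref{coord_bound} for coordinate bounds, Proposition \ref{p_dimension_basis} to transfer the $p$-witness of a basis vector $\bar z_{i_0}$ to $\bar g$, the Prime Number Theorem (in the tame case, the density-$\delta>0$ version on $\UR_{\bar N,\bar z_{i_0}}$) to locate a logarithmically small prime avoiding the coordinate, and monotonicity of $\dim_{\RFU}$ and $\psi_{\RD}$ under $N\mapsto N/\sqrt{\ga_c(N)}$ to lift the exponent back up to the full group. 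Two small points to tighten: (i) Proposition \ref{p_dimension_basis} is stated for a \emph{primitive} element, so you should first write $\bar g=\bar z^m$ with $\bar z$ primitive, note that $p\nmid a_{i_0}$ forces $p\nmid m$ as well as $p$ not dividing the $\bar z_{i_0}$-coordinate of $\bar z$, and then apply the proposition to $\bar z$ and use $\gcd(p,m)=1$ together with Lemma \ref{cyclic_p_witness} to conclude $\varphi(\bar g)\neq 1$ — this is exactly how the paper handles it; (ii) for the first (unconditional) bound, "by \cite[Theorem 1.1]{Pengitore_1} and its constituent computations" is circular as phrased; the precise justification is Proposition \ref{one_dim_center_primitive_element}, which produces $H\nsub\bar N$ with $Z(\bar N/H)=\innp{\pi_H(\bar z)}$, so that $\bar N/(H\cdot\bar N^p)$ detects $\bar z$ and has order $p^{\dim_{\R}(\bar N,\bar z)}\le p^{\psi_{\RD}(\bar N)}$, giving the needed universal bound on the $p$-witness order. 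With those adjustments your argument reproduces the paper's proof.
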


In order to make sense of this statement, we need to define the constant $\psi_{\RD}(N)$ found in \cite[Theorem 1.1]{Pengitore_1}. We start with the following proposition which is originally found in \cite[Proposition 3.1]{Pengitore_1} and which we recall for the reader's convenience. 
\begin{prop}\label{one_dim_center_primitive_element}
	Let $N$ be a torsion free, finitely generated nilpotent group, and suppose that $z$ is a primitive central element. There exists a normal subgroup $H \nsub N$ such that $N / H$ is a torsion free, finitely generated nilpotent group where $\innp{\pi_{H}(z)} \cong Z(N / H)$.
\end{prop}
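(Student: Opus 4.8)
The plan is to prove Proposition \ref{one_dim_center_primitive_element} by descending induction on the Hirsch length of the center. First I would note that if $Z(N)$ is already one-dimensional (as a torsion free abelian group), then since $z$ is primitive in $N$ it is in particular primitive in $Z(N)$, so $\innp{z} = Z(N)$ and we may take $H = \{1\}$. So assume $h(Z(N)) \geq 2$. The key step is to produce a nontrivial normal subgroup $K \nsub N$ contained in $Z(N)$ with $K \cap \sqrt{\innp{z}} = \{1\}$, so that $\pi_K(z)$ remains a primitive central element of the torsion free nilpotent quotient $N/\sqrt[N]{K}$ while $h(Z(N/\sqrt[N]{K})) < h(Z(N))$; then the inductive hypothesis applied to $N/\sqrt[N]{K}$ and $\pi(z)$ yields the required $H/\sqrt[N]{K}$, and pulling back gives $H \nsub N$. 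One must check that $\sqrt[N]{K}$ is normal (true since $K \nsub N$ and $N$ is torsion free finitely generated nilpotent, as recalled in the Background), that $N/\sqrt[N]{K}$ is torsion free (also recalled there), and that $\pi(z)$ is still primitive and central.

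To produce such a $K$: choose a generating basis $\set{z_1, \dots, z_m}$ for the torsion free abelian group $Z(N)$ with $z_1 = z$ (possible since $z$ is primitive in the abelian group $Z(N)$), and set $K = \innp{z_2}$. This is central, hence normal in $N$; and $\sqrt[N]{K} \cap \innp{z} = \{1\}$ because an element of $\sqrt[N]{K}$ lying in $\innp{z}$ would give $z^a = $ (product of $z_2^b$ up to finite order obstruction), forcing $a = 0$ by $\Z$-linear independence of the $z_i$ in $Z(N)/T(Z(N))$ — but $Z(N)$ is already torsion free since $N$ is, so there is no obstruction and $z^a \in \innp{z_2}$ forces $a = 0$. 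Consequently $\pi_{\sqrt[N]{K}}(z)$ has infinite order, and it is primitive in $N/\sqrt[N]{K}$: if $\pi(z) = \pi(g)^r$ for some $g \in N$, lifting and using that $\sqrt[N]{K}$ is isolated gives $z^s = (g^r)^s \cdot (\text{element of } \sqrt[N]{K})$ for suitable $s$, and a short argument using primitivity of $z$ in $N$ and the direct complement $\innp{z} \cap \sqrt[N]{K} = \{1\}$ pins down $\pi(g)^{\pm 1} = \pi(z)$. It remains central because $\pi$ is a homomorphism and $z \in Z(N)$.

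The main obstacle I anticipate is the verification that primitivity of $z$ in $N$ survives passage to the quotient $N/\sqrt[N]{K}$; the naive statement ``$z$ primitive in $N$ implies $\pi(z)$ primitive in $N/\sqrt[N]{K}$'' is false for a general normal subgroup, and one genuinely needs the isolated-complement property $\sqrt[N]{K} \cap \sqrt[N]{\innp{z}} = \{1\}$ together with the fact that $z \in \sqrt{\ga_c(N)}$-type elements (or here, central primitive elements) interact well with isolators. Concretely, the delicate point is ruling out the scenario where $z$ becomes a proper power modulo $\sqrt[N]{K}$; this is handled by working in the torsion free abelian quotient $Z(N)/(Z(N) \cap \sqrt[N]{K})$, observing $z_1 = z$ maps to a basis element there, and invoking that a basis element of a free abelian group is primitive. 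Everything else — normality of isolators, torsion-freeness of the quotient, and the bookkeeping of Hirsch lengths $h(Z(N/\sqrt[N]{K})) = h(Z(N)) - 1$ — is routine given the background recalled in Section \ref{background}, so the proof reduces to carefully assembling these pieces and then invoking the inductive hypothesis.
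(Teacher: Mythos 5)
Your overall strategy is close in spirit to the paper's: both proofs kill off a complement of $\innp{z}$ inside $Z(N)$ and appeal to an inductive hypothesis on a smaller quotient. The paper takes $M = \innp{x_2, \dots, x_{h(Z(N))}}$, i.e., the entire complementary summand of $\innp{z}$ in $Z(N)$, and inducts on the Hirsch length $h(N)$ of the whole group; you take only $K = \innp{z_2}$ and propose to induct on $h(Z(N))$. The one-generator-at-a-time versus all-at-once choice is cosmetic, and your primitivity and torsion-freeness checks for $N/\sqrt[N]{K}$ are sound (indeed $\sqrt[N]{K} = K$ here, since $Z(N)/K$ is already torsion free and $Z(N)$ is isolated in $N$).

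However, there is a genuine gap: your inductive measure is wrong. You assert twice that $h(Z(N/\sqrt[N]{K})) = h(Z(N)) - 1$, but the Hirsch length of the center can \emph{increase} when you quotient by a central subgroup, because $Z(N/K)$ need not equal $\pi_K(Z(N))$. A concrete counterexample: let $N = H_3 \times \Z$ where $H_3$ is the integral Heisenberg group, so $Z(N) \cong \Z^2$ with basis $\{e, c\}$ ($e$ the generator of the $\Z$ factor, $c$ the generator of $Z(H_3)$). Take $z = e$, so $z_2 = c$ and $K = \innp{c}$. Then $N/K \cong \Z^3$ and $h(Z(N/K)) = 3 > 2 = h(Z(N))$. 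Thus your induction never reaches the base case in this example, and the proof as written does not go through. The repair is exactly what the paper does: induct on $h(N)$ instead of $h(Z(N))$. Since $K$ is nontrivial, $h(N/\sqrt[N]{K}) < h(N)$ always holds, and the rest of your argument (normality, torsion-freeness of the quotient, preservation of primitivity of $\pi(z)$, pulling $H$ back through the correspondence theorem) then assembles correctly.
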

\begin{proof}
	We proceed by induction on Hirsch length to produce a normal subgroup $H \nsub N$ such that $\innp{\pi_H(z)} \cong Z(N/H)$ and where $N/H$ is a torsion free, finitely generated nilpotent group. Since the statement is evident for when $N \cong \Z$, we may suppose that $h(N) > 1$. If $h(Z(N)) = 1$, then as in the base case, the statement is evident. Therefore, we may assume that $h(Z(N)) > 1$.
	
	There exists a generating basis $\set{x_i}_{i=1}^{h(Z(N))}$ for $Z(N)$ such that $x_1 =z$. If we consider the subgroup given by $M = \innp{x_i \: | \: i \leq 2 \leq h(Z(N))}$, induction implies that there exists a normal subgroup $H / M \nsub N / M$ such that $(N/M)/(H/M)$ is a torsion free, finitely generated nilpotent group such that  $\innp{\pi_{H/M}(\pi_M(z))} \cong Z((N/H) / (H/M))$. The third isomorphism theorem implies that $(N/M) / (H/M) \cong N/H$, and subsequently, $Z(N/H) \cong Z((N/M)/(H/M))$. Therefore, it is evident that $H \nsub N$ is our desired normal subgroup.
\end{proof}

With the above proposition, we introduce the following definition.
\begin{defn}
	Let $N$ be a torsion free, finitely generated nilpotent group of step length $c$ with a primitive element $x \in \sqrt{\ga_{c}(N)}$. Proposition \ref{one_dim_center_primitive_element} implies that the value
	$$
	\dim_{\R}(N,x) \bdef \text{min}\set{h(N / H) \: | \: H \text{ satisfies Proposition \ref{one_dim_center_primitive_element} for $x$}}
	$$
	is bounded above by $h(N)$. We refer to the value $\dim_{\R}(N,x)$ as the \textbf{real residual dimension of $N$ with respect to $x$}. Letting $\mathcal{J}$ be the collection of primitive elements $x \in \sqrt{\ga_c(N)}$ such that there exists a natural number $k$ and elements $a \in \ga_{c-1}(N)$ and $b \in N$ such that $[a,b] = x^k$,  there exists a primitive element $z \in \sqrt{\ga_{c}(N)}$ such that
	$$
	\dim_{\R}(N,z) \bdef \text{max}\set{\dim_{\R}(N,x) \: | \: x \in \mathcal{J}}
	$$
	We refer to $\dim_{\R}(N,z)$ as the \textbf{real residual dimension of $N$} and denote it as $\psi_{\RD}(N)$. When $N$ is an infinite, finitely generated nilpotent group, we denote $\psi_{\RD}(N) \bdef \psi_{\RD}(N / T(N))$.
\end{defn}

If $N$ satisfies $h(Z(N)) = 1$, then one can see that the definition of $\psi_{\RD}(N)$ implies that $\psi_{\RD}(N) = h(N)$.

With the above in mind, we now compare the values $\dim_{\RFU}(N)$ and $\psi_{\RD}(N)$ for torsion free, finitely generated nilpotent groups.
\begin{prop}\label{residual_dim_vs_real_res_dim}
	Let $N$ be a torsion free, finitely generated nilpotent group that has tame residual dimension. Then $\dim_{\RFU}(N) \leq \psi_{\RD}(N)$.
\end{prop}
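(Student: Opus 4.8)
The plan is, for each primitive $x\in\sqrt{\ga_c(N)}$, to bound $\dim_{\RFU}(N,x)$ by the real residual dimension $\dim_{\R}(N,x)$, and then to compare $\dim_{\R}(N,x)$ with $\psi_{\RD}(N)$; maximizing over $x$ then gives $\dim_{\RFU}(N)\le\psi_{\RD}(N)$. Note at the outset that $\sqrt{\ga_c(N)}\subseteq Z(N)$, since centralizers are isolated in a torsion free finitely generated nilpotent group, so that Proposition \ref{one_dim_center_primitive_element} genuinely applies to $x$.

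The first step is the inequality $\dim_{\RFU}(N,x)\le\dim_{\R}(N,x)$. Choose $H\nsub N$ realizing $\dim_{\R}(N,x)$, i.e.\ with $N/H$ torsion free, $\innp{\pi_H(x)}\cong Z(N/H)$ and $h(N/H)=\dim_{\R}(N,x)$. The element $\pi_H(x)$ is primitive in $N/H$: a proper root of it would again be central (centralizers isolated), hence a proper root of a generator of $Z(N/H)\cong\Z$, which is impossible. Therefore, for every prime $p$, Lemma \ref{f_p_witness} applied to $N/H$ gives a surjection onto a finite $p$-group $Q$ with $\pi_H(x)$ not in the kernel and $|Q|\le p^{h(N/H)}$; composing with $N\to N/H$ shows that the $p$-witness of $x$ in $N$ has order at most $p^{\dim_{\R}(N,x)}$, so $\RP_{N,x,i}=\es$ for $i>\dim_{\R}(N,x)$. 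Since every prime lies in exactly one $\RP_{N,x,i}$ and $N$ has tame residual dimension at $x$, the finitely many sets $\RP_{N,x,1},\dots,\RP_{N,x,\dim_{\R}(N,x)}$ partition $\mathbb{P}$ into sets having natural densities that sum to $1$, so some index $i_0\le\dim_{\R}(N,x)$ has $\delta(\RP_{N,x,i_0})>0$; hence $\dim_{\RFU}(N,x)\le\dim_{\R}(N,x)$.

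The second step is $\dim_{\R}(N,x)\le\psi_{\RD}(N)$. If $x\in\mathcal J$ this is immediate from the definition of $\psi_{\RD}(N)$, so suppose $x\notin\mathcal J$. In $\bar N:=N/H$ one checks that the step length is still $c$ (otherwise $\pi_H(x)$ would be torsion), whence $\sqrt{\ga_c(\bar N)}=Z(\bar N)=\innp{\pi_H(x)}$, and that the commutator map $\ga_{c-1}(\bar N)\times\bar N\to\ga_c(\bar N)$ is onto: its image is a subgroup of the infinite cyclic group $\ga_c(\bar N)$ and generates it. Picking a commutator realizing a generator of $\ga_c(\bar N)$, lifting it to $[a_0,b_0]$ with $a_0\in\ga_{c-1}(N)$ and $b_0\in N$, and writing $[a_0,b_0]=y^m$ with $y\in\sqrt{\ga_c(N)}$ primitive, we get $y\in\mathcal J$; the plan is then to show $\dim_{\R}(N,y)\ge h(\bar N)=\dim_{\R}(N,x)$, which yields $\dim_{\R}(N,x)\le\dim_{\R}(N,y)\le\psi_{\RD}(N)$ and finishes the proof.

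I expect this last point to be the main obstacle: the projection $N\to N/H$ need not preserve primitivity, so a priori $\pi_H(y)$ is only a power of $\pi_H(x)$ rather than a generator of $Z(\bar N)$, and one must argue directly that no quotient $N/K$ with $\innp{\pi_K(y)}\cong Z(N/K)$ can have Hirsch length smaller than $h(N/H)$. The choices of $a_0$ and $b_0$ should be made carefully, using the surjectivity of the commutator map and the isolator structure of $\sqrt{\ga_c(N)}$, so that $[a_0,b_0]$ is as close to primitive as possible; it may also be cleaner to extract this comparison as a separate lemma (or to cite the corresponding structural results from \cite{Pengitore_1}). By contrast, the first step is routine once Proposition \ref{one_dim_center_primitive_element}, Lemma \ref{f_p_witness} and the definition of tame residual dimension are in hand.
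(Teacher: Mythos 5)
Your Step 1 (every prime admits a $p$-witness for $x$, hence the sets $\RP_{N,x,1},\dots,\RP_{N,x,\dim_{\R}(N,x)}$ partition $\mathbb{P}$ and one of them has positive density, giving $\dim_{\RFU}(N,x)\le\dim_{\R}(N,x)$) is correct and is exactly what the paper does, modulo the paper's separate treatment of the case $h(Z(N))=1$, which is only a notational convenience.

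Where the two treatments diverge is precisely your Step 2. The paper's proof passes directly from $\RP_{N,x,i}=\emptyset$ for $i>\dim_{\R}(N,x)$ to ``there exists $i_0$ with $1\le i_0\le\psi_{\RD}(N)$ such that $\UR_{N,x}=\RP_{N,x,i_0}$'', i.e.\ it silently invokes $\dim_{\R}(N,x)\le\psi_{\RD}(N)$ for an \emph{arbitrary} primitive $x\in\sqrt{\ga_c(N)}$ without comment. Given the paper's definition of $\psi_{\RD}(N)$ as $\max_{y\in\mathcal J}\dim_{\R}(N,y)$, where $\mathcal J$ is only the set of primitives some power of which is a \emph{single} commutator $[a,b]$ with $a\in\ga_{c-1}(N)$, this step is not automatic: $\mathcal J$ can be a proper subset of the primitives of $\sqrt{\ga_c(N)}$ (take $N$ free $2$-step nilpotent on four generators; then $[e_1,e_2][e_3,e_4]$ has no power that is a single commutator, by the Pl\"ucker relation in $\wedge^2\Z^4$). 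So you have correctly isolated a lacuna that the paper itself does not address.

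Your proposed repair of Step 2 does not yet close the gap, for the reason you already flag: lifting a generating commutator of $\ga_c(N/H)$ to $[a_0,b_0]=y^m$ produces $y\in\mathcal J$, but $\pi_H(y)$ need only be a proper power of $\pi_H(x)$, so $H$ need not lie in the family defining $\dim_{\R}(N,y)$, and neither inequality between $\dim_{\R}(N,y)$ and $h(N/H)$ is forced. (One small further caveat: in ``the commutator map $\ga_{c-1}(\bar N)\times\bar N\to\ga_c(\bar N)$ is onto'' you assert the image is a subgroup of $\ga_c(\bar N)\cong\Z$; this is true for a $\Z$-bilinear map to $\Z$ by a Smith-normal-form argument, but it is not a priori obvious and deserves a line.) What is actually needed is a structural lemma of the form: for every primitive $x\in\sqrt{\ga_c(N)}$ there exists $y\in\mathcal J$ and a normal $K$ with $N/K$ torsion free, $\innp{\pi_K(y)}\cong Z(N/K)$, and $h(N/K)\ge\dim_{\R}(N,x)$; or else a direct argument that $\dim_{\R}(N,x)\le\psi_{\RD}(N)$ for all primitive $x$. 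Until such a lemma is supplied, both your proof and the paper's have the same unfilled step.
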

\begin{proof}
	If $h(Z(N)) = 1$, we have that $\psi_{\RD}(N) = h(N)$. Thus, it follows that $\dim_{\RFU}(N) \leq \psi_{\RD}(N)$. Hence, we may assume that $h(Z(N)) > 1$. Let $x \in \sqrt{\ga_{c}(N)}$ be a primitive  element. Proposition \ref{one_dim_center_primitive_element} implies there exists a normal subgroup $H \nsub N$ such that $h(N / H) = \dim_{\R}(N,x)$. Letting $p$ be a prime number, we have that $\pi_{N^p \cdot H}(x) \neq 1$ and that $|N / N^p \cdot H| = p^{\dim_{\R}(N,x)}$. By definition, we have that there exists a $p$-witness  $\psi:N \to Q$ for $x$ and where $|Q| \leq p^{\dim_{\R}(N,x)}$. Since $h(Z(N)) > 1$, we have that $\RP_{N,x,i} = \emptyset$ for $\dim_{\R}(N,x) < i \leq h(N)$. Thus, there exists an index $i_0$ where $1 \leq i_0 \leq \psi_{\RD}(N)$ such that $\UR_{N,x} = \RP_{N,x,i_0}$. Therefore, $\dim_{\RFU}(N,x) \leq \psi_{\RD}(N)$. Since this is true independent of primitive elements in $\sqrt{\ga_{c}(N)}$, we have that $\dim_{\RFU}(N) \leq \psi_{\RD}(N)$.
\end{proof}

We have the following technical proposition.
\begin{prop}\label{real_residual_dim_inequality}
	Let $N$ be a torsion free, finitely generated nilpotent group. If $N$ is abelian, then $\psi_{\RD}(N) = 1$. If $N$ has step size $c > 1$, then $
	\psi_{\RD}(N / \sqrt{\ga_c(N)}) \leq \psi_{\RD}(N).
	$
\end{prop}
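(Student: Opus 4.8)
The plan is to split into the abelian case, the case of step length $c=2$, and the case $c\ge 3$, the last being where all the work lies.

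If $N$ is abelian, then $\sqrt{\ga_c(N)}=N$, and given a primitive $x\in N$ I would extend it to a $\Z$-basis $x=z_1,\dots,z_{h(N)}$ and set $H=\innp{z_2,\dots,z_{h(N)}}$; then $N/H\cong\Z$ is a torsion free finitely generated nilpotent quotient with $\innp{\pi_H(x)}=Z(N/H)$, so $H$ satisfies Proposition~\ref{one_dim_center_primitive_element} for $x$ and $\dim_{\R}(N,x)\le h(N/H)=1$; since $\dim_{\R}(N,x)\ge 1$ always, $\psi_{\RD}(N)=1$. For the second statement write $M=\sqrt{\ga_c(N)}$. I would first record that $N/M$ has step length exactly $c-1$: $\ga_c(N)\le M$ forces $\ga_c(N/M)=1$, while $\ga_{c-1}(N)/\ga_c(N)$ cannot have finite exponent $e$, since then $[u,g]^e=[u^e,g]=1$ for all $u\in\ga_{c-1}(N)$ and $g\in N$ (as $[u,g]\in\ga_c(N)\le Z(N)$), so $[u,g]=1$ by torsion-freeness and $\ga_c(N)=1$, a contradiction; thus $\ga_{c-1}(N)\not\le M$ and $\ga_{c-1}(N/M)\neq 1$. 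When $c=2$ this makes $N/M$ abelian, so $\psi_{\RD}(N/M)=1$ by the first part, while $N$ nonabelian gives a nonempty $\mathcal{J}$ and hence $\psi_{\RD}(N)\ge 1$, settling this case.

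For $c\ge 3$, $N/M$ has step length $c-1\ge 2$. I would pick a primitive $\bar x\in\sqrt{\ga_{c-1}(N/M)}$ lying in $\mathcal{J}$ (for $N/M$) and realizing $\dim_{\R}(N/M,\bar x)=\psi_{\RD}(N/M)$, and write $\bar x^{k_1}=[\bar a,\bar b]$ with $\bar a\in\ga_{c-2}(N/M)$. Lifting, choose $a\in\ga_{c-2}(N)$, $b\in N$ and $x_0\in\sqrt{\ga_{c-1}(N)}$ with $\pi_M(x_0)=\bar x$; then $[a,b]\in\ga_{c-1}(N)$ and $\pi_M([a,b])=\bar x^{k_1}\neq 1$, so $[a,b]=x_0^{k_1}m$ with $m\in M\le Z(N)$. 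The main subcase is $[a,b]\notin Z(N)$ (equivalently $x_0\notin Z(N)$): then there is $g\in N$ with $1\neq[[a,b],g]\in\ga_c(N)$, so $[[a,b],g]=z^{k_2}$ for some primitive $z\in\sqrt{\ga_c(N)}$, and since $[a,b]\in\ga_{c-1}(N)$ this exhibits $z\in\mathcal{J}$ for $N$, so $\dim_{\R}(N,z)\le\psi_{\RD}(N)$. The heart of the matter is then the claim that $\dim_{\R}(N,z)\ge\psi_{\RD}(N/M)$.

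To prove the claim I would take any torsion free finitely generated nilpotent quotient $N/H'$ with $Z(N/H')=\innp{\pi_{H'}(z)}\cong\Z$. Since $z\in\sqrt{\ga_c(N)}$ and $\pi_{H'}(z)\neq 1$, the group $N/H'$ has step length exactly $c$, so $\ga_c(N/H')\le Z(N/H')\cong\Z$; together with $\pi_{H'}(z)\in\pi_{H'}(M)\subseteq\sqrt{\ga_c(N/H')}$ this forces $\pi_{H'}(M)=\sqrt{\ga_c(N/H')}=Z(N/H')$, whence $(N/H')/Z(N/H')\cong N/(H'M)$ is a quotient of $N/M$ of Hirsch length $h(N/H')-1$. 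The image of $\bar x$ in $N/(H'M)$ equals the image of $x_0$, which is central there (because $x_0\in\sqrt{\ga_{c-1}(N)}$ maps into $\sqrt{\ga_{c-1}(N/H')}$, which sits in the second center of the class-$c$ group $N/H'$) and nontrivial (because $[\pi_{H'}([a,b]),\pi_{H'}(g)]=\pi_{H'}(z)^{k_2}\neq 1$ gives $\pi_{H'}(x_0)^{k_1}=\pi_{H'}([a,b])\pi_{H'}(m)^{-1}\notin Z(N/H')$). Applying Proposition~\ref{one_dim_center_primitive_element} to a primitive central element of $N/(H'M)$ of which this image is a positive power produces a torsion free nilpotent quotient $R$ of $N/M$ in which the image of $\bar x$ generates an infinite cyclic group isomorphic to $Z(R)$, so $R$ satisfies Proposition~\ref{one_dim_center_primitive_element} for $\bar x$ and $\psi_{\RD}(N/M)\le h(R)\le h(N/(H'M))=h(N/H')-1$. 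Minimizing over $H'$ gives $\dim_{\R}(N,z)\ge\psi_{\RD}(N/M)+1$, hence $\psi_{\RD}(N)\ge\psi_{\RD}(N/M)$.

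The step I expect to be the main obstacle is the excluded subcase $[a,b]\in Z(N)$, i.e.\ when every lift of $\bar x$ to $N$ lies in $Z(N)$, so that $[[a,b],g]$ vanishes for all $g$ and the auxiliary element $z$ cannot be manufactured this way. This is exactly the behaviour one sees in direct-product examples such as a free class-three nilpotent group times a higher Heisenberg group, where the ``extra'' top-layer central directions of $N/M$ come from a step-$(c-1)$ direct factor. A correct argument there would need either to show that $\psi_{\RD}(N/M)$ is always realized by some $\bar x$ admitting a non-central lift, or to first split off the central line $\innp{x_0}\le\sqrt{\ga_{c-1}(N)}$ and descend by induction on $h(N)$; making this case go through is the crux of the proof.
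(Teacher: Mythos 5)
Your argument follows essentially the same route as the paper's, and the obstruction you flag at the end is a genuine gap — moreover it is an unacknowledged gap in the paper's own proof. After lifting a primitive $g\in\sqrt{\ga_{c-1}(N/M)}$ to $x\in\sqrt{\ga_{c-1}(N)}$, the paper simply asserts ``there exists an element $y\in N$ such that $[x^m,y]$ is a nontrivial element of $\ga_c(N)$,'' which is false precisely when $x$ lands in $Z(N)$. This can happen: for $N=K\times L$ with $K,L$ torsion free, finitely generated nilpotent of step lengths $c$ and $c-1$ respectively, one has $M=\sqrt{\ga_c(K)}\times 1$, and any primitive $g$ coming from the $L$-factor of $\sqrt{\ga_{c-1}(N/M)}$ has every lift landing in $1\times\sqrt{\ga_{c-1}(L)}\leq Z(N)$. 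So you have correctly located the weak point, and it is a defect of the published argument, not merely of your reconstruction of it. Neither you nor the paper closes it; your two suggested repair strategies (show the maximum is always realized at a $\bar x$ with a non-central lift, or split off the central line $\innp{x_0}$ and induct on Hirsch length) are both sensible places to start.

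Within the working subcase your treatment is in fact somewhat tighter than the paper's: you restrict from the start to $\bar x\in\mathcal{J}_{N/M}$, the correct index set for the maximum defining $\psi_{\RD}(N/M)$ (the paper works with an arbitrary primitive element of $\sqrt{\ga_{c-1}(N/M)}$, which is at once more than is needed and harder to lift); and you replace the paper's unexplained assertion ``by direct calculation, we have that $H\leq K$'' with the explicit identification $\pi_{H'}(M)=Z(N/H')$ and the resulting passage to $N/(H'M)\cong (N/H')/Z(N/H')$ of Hirsch length $h(N/H')-1$. One small point to watch: the image of $\bar x$ in $N/(H'M)$ need not be a primitive central element, only a positive power of one, so the quotient $R$ you produce from Proposition~\ref{one_dim_center_primitive_element} need not literally satisfy its conclusion for $\bar x$ itself; this is a patchable technicality, but worth spelling out — and it too is elided in the paper.
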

\begin{proof}
Since the first statement is clear, we may assume that $N$ has step length $c > 1$. Let $M = \sqrt{\ga_c(N)}$, and let $g \in \sqrt{\ga_{c-1}(N/M)}$ be a primitive element. There exists a primitive element $x \in \sqrt{\ga_{c-1}(N)}$ such that $\pi_M(x) = g$. Since $x \in \sqrt{\ga_{c-1}(N)}$, there exists a natural number $m$ such that $x^m \in \ga_{c-1}(N)$. Thus, there exists an element $y \in N$ such that $[x^m,y]$ is a nontrivial element in $\ga_c(N)$. Hence, there exists an element $z \in M$ and a natural number $s$ such that $z^s = [x^m,y]$. Let $H \nsub N$ satisfy Proposition \ref{one_dim_center_primitive_element} for $z$ in $N$ such that $\dim_{\R}(N,z) = h(N/H)$. By construction, we have that $\pi_H([x^m,y]) \neq 1$ since $\pi_H(z)$ generates $Z(N/H)$. In particular, we have that $\pi_H(x) \notin \sqrt{\ga_c(N/H)}$. Hence, we have an induced surjective group morphism $\overline{\pi_H} \colon N / M \to (N/H) / \sqrt{\ga_c(N/H)}$ so that $\overline{\pi_H}(\pi_M(x)) \neq 1$. Thus, we have that $\overline{\pi_H}(g) \neq 1$. By construction, we have $N/M \cdot H$ is a torsion free, finitely generated nilpotent group. By following the proof of Proposition \ref{one_dim_center_primitive_element} we may find a normal subgroup $K / M \cdot H \nsub N/ M \cdot H$ so that $(N/M \cdot H) / (K / M \cdot H)$ is a quotient  of $N/M$ that satisfies Proposition \ref{one_dim_center_primitive_element} for $g$. In particular, we may write:
$$
\dim_\R(N/M,g) \leq h(N/M \cdot H) \leq \dim_{\R}(N,z) \leq \psi_{\RD}(N).
$$
Since $g$ is an arbitrary primitive element of $\sqrt{\ga_{c-1}(N/M)}$, we have that $\psi_{\RD}(N/M) \leq \psi_{\RD}(N).$
%
%
\end{proof}

\noindent We now proceed to the proof of Theorem \ref{upper_bounds_residual_function}.
\begin{proof} 
	Let us first assume that $N$ is a torsion free, finitely generated nilpotent group of step length $c$. We proceed with the proof of the first statement. 
	
	Let $S = \set{x_i}_{i=1}^{h(N)}$ be a Mal'tsev basis, and for simplicity, let $M = \sqrt{\ga_{c}(N)}$. Let $g = \prod_{i=1}^{h(N)} x_i^{\al_i}$ be a nontrivial element of word length at most $n$. We proceed by induction on step length, and observe that the base case follows from \cite[Corollary 2.3]{Bou_Rabee10}. Thus, we may assume that $N$ has step length $c > 1$.
	
	If $\pi_{M}(g) \neq 1,$ then induction implies that there exists a surjective group morphism to a finite group $\map{\varphi}{N / M}{Q_1}$ such that $\varphi(\bar{g}) \neq 1$ and where $|Q_1| \leq C_1 \pr{\log(C_1 \: n)}^{\dim_{\RFU}(N / M)}$ for some constant $C_1 > 0$. Proposition \ref{real_residual_dim_inequality} implies that $\psi_{\RD}(N / M) \leq \psi_{\RD}(N)$. Since $\map{\varphi \circ \pi_M}{N}{Q_1}$ satisfies $\varphi \circ \pi_M(g) \neq 1$, we have that
	$
	\D_{N}(g) \leq C_1 \pr{\log(C_1 \: n)}^{\psi_{\RD}(N)}.
	$
	
	Now suppose that $\pi_M(g) = 1$. That implies that we may write $g = \prod_{i=1}^{h(M)}x_i^{\al_i}$, and since $\|g\| \leq n$, Lemma \ref{coord_bound} implies that there exists a constant $C_2 > 0$ such that $|\al_i| \leq C_2 \: n^c$ for all $i$. There exists a primitive element $z = \prod_{i=1}^{h(M)}x_i^{\beta_i}$ and a nonzero integer $m$ so that $z^m = g$. Let $H \nsub N$ satisfy Proposition \ref{one_dim_center_primitive_element} for $z$ in $N$ such that $\dim_{\R}(N,z) = h(N/H)$. 
	Since $\set{x_1,\cdots,x_{h(M)}}$ are central elements, we have for all $i$ that $\beta_i \: m = \al_i$. In particular, we have that $|m| \leq C_2 \: n^c$. By the Prime Number Theorem, there exists a constant $C_3 > 0$ such that $p \leq C_3 \: \log(C_3 \: n)$ and where $p \nmid m$. By construction, $\pi_{H \cdot N^p}(g) = \pi_{H \cdot N^p}(z^m) \neq 1$. Thus, there exists a constant $C_4 > 0$ such that 
	$$
	\D_N(g) \leq |N/H \cdot N^p| = p^{\dim_{\R}(N,x)} \leq p^{\psi_{\RD}(N)} \leq C_4 \pr{\log(C_4 \: n)}^{\psi_{\RD}(N)}.
	$$
	Since all possibilities have been covered, we have that
	$
	\Farb_{N}(n) \precsim \pr{\log(n)}^{\psi_{\RD}(N)}
	$
	when $N$ is an arbitrary, torsion free, finitely generated nilpotent group.
	
	We now assume that $N$ is a torsion free, finitely generated nilpotent group with tame residual dimension. Thus, we take $S = \{x_i\}_{i=1}^{h(N)}$ as a Mal'tsev basis where $\{x_i\}_{i=1}^{h(M)}$ is a residually tame generating basis for $M$.  As before, we may proceed by induction on step length, and observe that the base case follows from \cite[Corollary 2.3]{Bou_Rabee10}. Thus, we may assume that $c > 1$. If $\pi_{M}(g) \neq 1$, then by the inductive hypothesis, there exists a surjective group morphism $\map{\psi}{N/M}{Q_1}$ such that $\psi(\pi_{M}(g)) \neq 1$ and where $|Q_1| \leq \pr{\log(C_5 \:  n)}^{\dim_{\RFU}(N/M)}$
for some constant $C_5 > 0$. Proposition \ref{residual_dim_lower_step_quotient} implies that $\dim_{\RFU}(N/M) \leq \dim_{\RFU}(N),$ and thus, $\D_N(g) \leq \pr{C_5 \: \log(C_5 \: n)}^{\dim_{\RFU}(N)}.$
	
	Therefore, we may assume that $\pi_M(g) = 1$; hence, we may write $g = \prod_{i=1}^{h(M)}x_i^{\al_i}$. For each $1 \leq i \leq h(M)$, the Prime Number Theorem implies that there exists a constant $C_{6,i}$ such that for all $k \in \N$, there exists a prime $p \in \UR_{N,x_i}$ such that $p \leq C_{6,i} \log(C_{6,i} |k|)$ and where $p \nmid k$. We let $C_6 = \text{max}\{C_{6,i} \: | \: 1 \leq i \leq h(M)\}$. Proposition \ref{coord_bound} implies that there exists a constant $C_7 > 0$ such that $|\al_i| \leq C_7 \: n^c$ for all $1 \leq i \leq h(M)$. There exists a primitive element $z \in M$ such that $z^m = g$ for some nonzero integer $m$.  Writing $z = \prod_{i=1}^{h(M)}x_i^{\beta_i}$ and noting that $\set{x_1,\cdots,x_{h(M)}}$ are central elements, we have that $\beta_i \: m = \al_i$ for all $i$, and thus, it follows that $|m| \leq |m \: \beta_i| = |\al_i| \leq C_7 \: n^c$. The Prime Number Theorem implies that there exists a prime $p \in \UR_{N,x_{i_0}}$ such that $p \leq C_8\: \log(C_8 \: n)$ for some prime $p$  where $p \nmid |m|$. 
	
	By assumption, we have that $\{x_i\}_{i=1}^{h(\sqrt{\ga_c(N)})}$ is a residually tame basis for $\sqrt{\ga_c(N)}$, and thus, by defintion, we have that there exists an index $1 \leq i_0 \leq h(\sqrt{\ga_c(N)}$ such that $p \nmid \al_{i_0}$ and where there exists a $p$-witness $\map{\varphi}{N}{P}$ of $x_{i_0}$ such that $\varphi(z) \neq 1$. Moreover, we have that $|P| \leq p^{\dim_{\RFU}(N)}$. Since $p \nmid m$, we have that $\text{Ord}_P(\varphi(z)) \nmid |m|$. In particular, $\varphi(g) = \varphi(z^m) \neq 1$. Thus,
	$
	\D_N(g) \leq \pr{C_8 \: \log(C_8 \: n)}^{\dim_{\RFU}(N)}.
	$
	Thus,
	$
	\Farb_N(n) \precsim \pr{ \log(  n)}^{\dim_{\RFU}(N)}.
	$
	
	Now suppose that $N$ is an infinite, finitely generated nilpotent group where $|T(N)| > 1$. By definition,
	$
	\psi_{\RD}(N) = \psi_{\RD}(N/T(N)).
	$ Thus, we have that $\Farb_{N/T(N)}(n) \precsim \pr{\log(n)}^{\psi_{\RD}(N)}$ by the above arguments.  Proposition \ref{torsion_to_torsion_free} implies that
	$
	\Farb_N(n) \approx \Farb_{N/T(N)}(n) \precsim \pr{\log(n)}^{\psi_{\RD}(N)}.
	$
	
	Now suppose that $N$ is an infinite, finitely generated nilpotent group that has tame residual dimension where $|T(N)| > 1$. As before, we note that $\dim_{\RFU}(N) = \dim_{\RFU}(N/T(N))$ by definition. The above arguments and Proposition \ref{torsion_to_torsion_free} imply that we may write
	$
\Farb_N(n) \approx \Farb_{N/T(N)}(n) \precsim \pr{ \log( n)}^{\dim_{\RFU}(N)}.
	$
\end{proof}

\section{Residual finiteness of finitely generated nilpotent groups with accessible residual dimension}\label{accessible_section}
We now prove Theorem \ref{accessible_result}.
\begin{thmn}[\ref{accessible_result}]
	Let $N$ be an infinite, finitely generated nilpotent group such that $N / T(N)$ has step length $c > 1$, and suppose that $N$ has accessible residual dimension. Then there exists a natural number $\dim_{\ARF}(N)$ such that $c + 1 \leq \dim_{\ARF}(N) \leq \psi_{\RD}(N)$ and where
	$$
	\Farb_{N}(n) \approx \pr{\log(n)}^{\dim_{\ARF}(N)}.
	$$
\end{thmn}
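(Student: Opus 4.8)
The plan is to obtain the statement by combining the lower bound of Theorem~\ref{lower_bound_rf_result}, the upper bound of Theorem~\ref{upper_bounds_residual_function}, and the observation that the accessibility hypothesis forces $\dim_{\RFL}(N)$ and $\dim_{\RFU}(N)$ to agree. First I would reduce to the torsion free case: since $\psi_{\RD}(N)$, $\dim_{\ARF}(N)$, and the step length $c$ are all defined through $N/T(N)$, and since Proposition~\ref{torsion_to_torsion_free} gives $\Farb_N(n) \approx \Farb_{N/T(N)}(n)$, it suffices to prove the result for a torsion free, finitely generated nilpotent group $N$ of step length $c > 1$ with accessible residual dimension; the general case then follows formally.

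The main step is to show that accessibility makes $\dim_{\RFL}(N,x) = \dim_{\RFU}(N,x)$ for every primitive $x \in \sqrt{\ga_c(N)}$. Fix such an $x$. The argument in the proof of Proposition~\ref{lower_bound_residual_dimension} already gives $\dim_{\RFL}(N,x) \geq c+1 > 1$, and since positive natural density implies infinitude one has $\dim_{\RFL}(N,x) \leq \dim_{\RFU}(N,x)$ (as in the proof of Proposition~\ref{upper_vs_lower}); hence $\dim_{\RFU}(N,x) > 1$ and we are never in the degenerate case $\dim_{\RFU}(N,x) = 1$ of Definition~\ref{accessible_residual}. Therefore ``low dimensional vanishing at $x$'' means exactly that $\RP_{N,x,i}$ is finite for all $1 \leq i < \dim_{\RFU}(N,x)$. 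On the other hand $\delta(\RP_{N,x,\dim_{\RFU}(N,x)}) > 0$, so $\RP_{N,x,\dim_{\RFU}(N,x)}$ is infinite. Comparing, $\dim_{\RFU}(N,x)$ is precisely the least index at which $\RP_{N,x,i}$ is infinite, i.e. $\dim_{\RFL}(N,x) = \dim_{\RFU}(N,x)$. Maximizing over all primitive $x \in \sqrt{\ga_c(N)}$ (the index set is the same in both definitions) yields $\dim_{\RFL}(N) = \dim_{\RFU}(N)$, which is exactly the well-definedness of $\dim_{\ARF}(N)$ asserted in Definition~\ref{accessible_residual}.

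Finally I would assemble the estimates. Accessibility includes tameness, so Theorem~\ref{upper_bounds_residual_function} applies and gives $\Farb_N(n) \precsim \pr{\log(n)}^{\dim_{\RFU}(N)}$, while Theorem~\ref{lower_bound_rf_result} gives $\pr{\log(n)}^{\dim_{\RFL}(N)} \precsim \Farb_N(n)$; since the two exponents coincide with $\dim_{\ARF}(N)$ by the previous paragraph, we conclude $\Farb_N(n) \approx \pr{\log(n)}^{\dim_{\ARF}(N)}$. The numerical bounds follow from facts already in hand: Proposition~\ref{lower_bound_residual_dimension} gives $\dim_{\ARF}(N) = \dim_{\RFL}(N) \geq c+1$, and Proposition~\ref{residual_dim_vs_real_res_dim} gives $\dim_{\ARF}(N) = \dim_{\RFU}(N) \leq \psi_{\RD}(N)$. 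The torsion case is then immediate from Proposition~\ref{torsion_to_torsion_free} together with the conventions $\dim_{\ARF}(N) = \dim_{\ARF}(N/T(N))$ and $\psi_{\RD}(N) = \psi_{\RD}(N/T(N))$, noting the step length of $N/T(N)$ is still $c$. The only genuinely load-bearing step is the identity $\dim_{\RFL}(N,x) = \dim_{\RFU}(N,x)$, and even there the work is just unwinding the definition of low dimensional vanishing and ruling out the degenerate index; everything else is bookkeeping over already-proven results.
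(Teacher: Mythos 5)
Your proposal is correct and follows essentially the same route as the paper: combine Theorem~\ref{lower_bound_rf_result} (lower bound), Theorem~\ref{upper_bounds_residual_function} (upper bound and the inequality $\dim_{\RFU}(N) \leq \psi_{\RD}(N)$), Proposition~\ref{lower_bound_residual_dimension} (the inequality $\dim_{\RFL}(N) \geq c+1$), and Proposition~\ref{torsion_to_torsion_free} (torsion reduction). The one place you go further than the paper is the identity $\dim_{\RFL}(N) = \dim_{\RFU}(N)$: the paper merely asserts in the definition of $\dim_{\ARF}$ that this is ``straightforward to see,'' whereas you actually unwind the definition of low dimensional vanishing, rule out the degenerate case $\dim_{\RFU}(N,x) = 1$ via $\dim_{\RFL}(N,x) \geq c+1 > 1$ and $\dim_{\RFL}(N,x) \leq \dim_{\RFU}(N,x)$, and conclude that $\dim_{\RFU}(N,x)$ is exactly the smallest index with $\RP_{N,x,i}$ infinite. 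That filled-in argument is sound and is a genuine improvement in rigor over the paper's treatment, even though the overall skeleton is identical.
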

\begin{proof}
	By Theorem \ref{upper_bounds_residual_function}, we have that $\Farb_{N / T(N)}(n) \precsim \pr{\log(n)}^{\dim_{\RFU}(N / T(N))}$ and that $\dim_{\RFU}(N/T(N)) \leq \psi_{\RD}(N/T(N))$. By Theorem \ref{lower_bound_rf_result}, we have that $\pr{\log(n)}^{\dim_{\RFL}(N/T(N))} \precsim \Farb_{N / T(N)}(n)$. Moreover, we have the following inequality $c +1  \leq \dim_{\RFL}(N/T(N))$. Since $$\dim_{\ARF}(N /T(N)) = \dim_{\RFL}(N / T(N)) = \dim_{\RFU}(N / T(N)),$$ we have that $\Farb_{N / T(N)}(n) \approx \pr{\log(n)}^{\dim_{\ARF}(N / T(N))}$. Proposition \ref{torsion_to_torsion_free} implies that $\Farb_{N}(n) \approx \Farb_{N / T(N)}(n)$. Since  $\dim_{\ARF}(N) = \dim_{\ARF}(N / T(N))$ and $\psi_{\RD}(N) = \psi_{\RD}(N/T(N))$, it follows that $c + 1 \leq \dim_{\ARF}(N) \leq \psi_{\RD}(N)$. Thus, 
	 $
	 \Farb_{N}(n) \approx \pr{\log(n)}^{\dim_{\ARF}(N)}.
	 $
\end{proof}

\section{Effective separability of Filiform nilpotent groups}\label{separability_heisenberg}
We start this section by defining Filiform nilpotent groups.
\begin{defn}
Suppose that $N$ is a torsion free, finitely generated nilpotent group of Hirsch length $h \geq 3$. If $N$ has step length $h-1$, then we say that $N$ is a \textbf{Filiform nilpotent group}. 
\end{defn}

A collection of Filiform groups are groups given by the presentation where $h \geq 3$:
$$
\mathcal{F}_h = \innp{x_1, \cdots, x_{h} \: | \: [x_1, x_i] = x_{i+1} \text{ for } 2 \leq i \leq h-1 \text{ and all other commutators are trivial }}.
$$
In particular, this class of Filiform groups includes the $3$-dimensional integral Heisenberg group. 

Let $N$ be a Filiform nilpotent group of Hirsch length $h$. The torsion free quotient of the abelianization of $N$ is isomorphic to $\Z^2$. We also have that the Hirsch length of $\sqrt{\ga_i(N)}$ is $h - i$ for $i>1$ and that $\sqrt{\ga_{h-1}(N)} \cong \Z$. 

We start by calculating the order of a finite $p$-witness for a primitive element for all but finitely many primes.

\begin{lemma}\label{heis_p_things}
	Let $N$ be a Filiform nilpotent group of Hirsch length $h \geq 3$ and let $p$ be prime number. Let $z \in \sqrt{\ga_{h-1}(N)}$ be a primitive element where $z^k \in \ga_{h-1}(N)$. If $\varphi:N \to P$ is a $p$-witness for $z$ where $p \nmid k$, then $|P| = p^h$.
\end{lemma}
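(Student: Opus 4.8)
The plan is to squeeze $|P|$ between $p^h$ from above and $p^h$ from below, using only the two defining features of a Filiform group of Hirsch length $h$: that $h(N) = h$, and that its step length is exactly $h - 1$, which is $\geq 2$ since $h \geq 3$.

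First I would treat the upper bound. Since $[\ga_{h-1}(N), N] = \ga_h(N) = \set{1}$, the subgroup $\ga_{h-1}(N)$ is central, and as the centre of a torsion free, finitely generated nilpotent group is isolated, $\sqrt{\ga_{h-1}(N)} \leq Z(N)$; in particular $z$ is a primitive central element. Lemma \ref{f_p_witness} then produces a surjective morphism from $N$ onto a finite $p$-group of order at most $p^{h(N)} = p^h$ on which $z$ survives. Since a $p$-witness of $z$ has, by definition, minimal order among all such morphisms, $|P| \leq p^h$.

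Next I would treat the lower bound, whose crux is that $\varphi$ is nontrivial on $\ga_{h-1}(N)$. Because $P$ is a $p$-group and $p \nmid k$, the order of $\varphi(z)$ is a power of $p$, hence coprime to $k$, so $\varphi(z^k) = \varphi(z)^k \neq 1$. As $z^k \in \ga_{h-1}(N)$ and $\varphi$ is surjective, this forces $\ga_{h-1}(P) = \varphi(\ga_{h-1}(N)) \neq \set{1}$, so $P$ has step length at least $h - 1 \geq 2$. Lemma \ref{p_dim_step_length} then gives $|P| \geq p^{(h-1)+1} = p^h$, and combined with the upper bound this yields $|P| = p^h$.

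I do not anticipate a real obstacle: the argument is just bookkeeping of Hirsch length against step length, and the only points needing a line of justification are the centrality of $\sqrt{\ga_{h-1}(N)}$ (so that Lemma \ref{f_p_witness} applies) and the fact that passing to the $k$-th power does not annihilate $\varphi(z)$ when $p \nmid k$ — both immediate from the hypotheses and the $p$-group structure.
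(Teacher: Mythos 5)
Your proposal is correct and follows essentially the same route as the paper: an upper bound of $p^{h(N)}$ coming from $N/N^p$ (the paper invokes $\pi_p$ directly; you reach it through Lemma \ref{f_p_witness}, which is the same mechanism), and a lower bound showing $\varphi(z^k)\neq 1$ forces $\ga_{h-1}(P)\neq\{1\}$ so that Lemma \ref{p_dim_step_length} applies. Your extra remark that $\sqrt{\ga_{h-1}(N)}\leq Z(N)$ (so Lemma \ref{f_p_witness} is literally applicable) is a small justification the paper leaves implicit, but it is not a different approach.
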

\begin{proof}
	Under the group morphism $\map{\pi_p}{N}{N / N^p}$, we have that $\pi_p(z) \neq 1$ and $|N / N^p| = p^h$. Therefore, $|Q| \leq p^h$. Now suppose that $\map{\varphi}{N}{Q}$ is a surjective group morphism to a finite $p$-group where $\varphi(z) \neq 1$. Since $\gcd(k,p) = 1$, we have that $\text{Ord}_Q(\varphi(z)) \nmid k$. Subsequently, $\varphi(z^k) \neq 1$. Thus, $\varphi(\ga_{h-1}(N))$ is nontrivial, and hence, $Q$ has step length $h-1$. Lemma \ref{p_dim_step_length} implies that $|Q| \geq p^h$. Hence, $|Q| = p^h$.
\end{proof}

As a consequence, we are able to show that all Filiform nilpotent groups have accessible residual dimension.
\begin{prop}
Let $N$ be a Filiform nilpotent group of Hirsch length $h$. Then $N$ has accessible residual dimension.
\end{prop}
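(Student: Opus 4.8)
The plan is to verify the recursive definition of accessible residual dimension for $N$ directly. Recall that accessibility requires two things: first, that the quotient $N / \sqrt{\ga_{h-1}(N)}$ has accessible residual dimension, and second, that $N$ has low dimensional vanishing at every primitive element $x \in \sqrt{\ga_{h-1}(N)}$. Since $\sqrt{\ga_{h-1}(N)} \cong \Z$ is one-dimensional, there is essentially a unique primitive element up to sign, so the second condition reduces to checking low dimensional vanishing at a single $z$.

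\textbf{Step 1: Handle the top quotient.} First I would observe that $N / \sqrt{\ga_{h-1}(N)}$ is again a Filiform nilpotent group — it has Hirsch length $h - 1$ and step length $h - 2 = (h-1) - 1$, so the defining relation for Filiform groups is preserved (this uses the structural facts recorded just before Lemma \ref{heis_p_things} about the Hirsch lengths of the isolators $\sqrt{\ga_i(N)}$). This sets up an induction on Hirsch length $h \geq 3$: the base case $h = 3$ is the integral Heisenberg group, whose quotient by $\sqrt{\ga_2}$ is $\Z^2$, which has accessible residual dimension by Lemma \ref{abelian_tame} and the definition. So the induction reduces everything to verifying low dimensional vanishing at the primitive central-type element $z \in \sqrt{\ga_{h-1}(N)}$.

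\textbf{Step 2: Low dimensional vanishing at $z$.} Here is where Lemma \ref{heis_p_things} does the work. Let $z \in \sqrt{\ga_{h-1}(N)}$ be primitive with $z^k \in \ga_{h-1}(N)$; the set of primes dividing $k$ is finite. By Lemma \ref{heis_p_things}, for every prime $p \nmid k$, any $p$-witness of $z$ has order exactly $p^h$. Consequently $\RP_{N,z,i} = \emptyset$ for all $i \neq h$ except possibly at finitely many primes (those dividing $k$), so in fact $|\RP_{N,z,i}| < \infty$ for all $i < h$, while $\RP_{N,z,h}$ is cofinite in $\mathbb{P}$ and hence has natural density $1$. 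This simultaneously shows that $N$ has tame residual dimension at $z$ (each $\RP_{N,z,i}$ has a natural density: $0$ for $i \neq h$, $1$ for $i = h$), that $\dim_{\RFU}(N,z) = h$, and that $|\RP_{N,z,i}| < \infty$ for $1 \leq i < \dim_{\RFU}(N,z) = h$ — which is precisely the definition of low dimensional vanishing at $z$. Combined with Step 1 and the inductive hypothesis that $N/\sqrt{\ga_{h-1}(N)}$ has accessible residual dimension, this gives that $N$ has accessible residual dimension.

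I do not expect a serious obstacle here; the content is essentially packaged into Lemma \ref{heis_p_things}, and the remaining work is bookkeeping: confirming that a Filiform group of Hirsch length $h$ modulo its top isolator is Filiform of Hirsch length $h-1$ (so the induction closes), and carefully matching the conclusion of Lemma \ref{heis_p_things} against the definitions of tame residual dimension, $\dim_{\RFU}$, and low dimensional vanishing. The one point to be slightly careful about is the edge case in the definition of low dimensional vanishing when $\dim_{\RFU}(N,z) = 1$ versus $> 1$; since $h \geq 3$ here we are always in the case $\dim_{\RFU}(N,z) = h > 1$, so the "$|\RP_{N,x,i}| < \infty$ for $1 \leq i < \dim_{\RFU}(N,x)$" clause is the relevant one and is exactly what Lemma \ref{heis_p_things} delivers.
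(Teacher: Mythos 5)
Your proof is correct and follows essentially the same route as the paper: induct on Hirsch/step length (noting that $N/\sqrt{\ga_{h-1}(N)}$ is again Filiform or abelian), then use Lemma \ref{heis_p_things} to show that $\RP_{N,z,h}$ is cofinite in $\mathbb{P}$ while $\RP_{N,z,i}$ for $i < h$ is contained in the finite set of primes dividing $k$. Your version is slightly more careful in explicitly matching the conclusion of the lemma against the definitions of tame residual dimension, $\dim_{\RFU}(N,z)$, and low dimensional vanishing, but the substance is identical to the paper's argument.
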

\begin{proof}
We proceed by induction on step length of $N$. Since $N/ \sqrt{\ga_{h-1}(N)}$ is either a lower dimensional Filiform nilpotent group or abelian, we have by induction that $N / \sqrt{\ga_{h-1}(N)}$ has accessible residual dimension. Since $h(\sqrt{\ga_c(N)}) = 1$, we have that $N$ has a residually tame generating basis for $\sqrt{\ga_c(N)}$. Let $z \in \sqrt{\ga_{h-1}(N)}$ be a primitive element. There exists an integer $k$ such that $z^k \in \ga_{h-1}(N)$. If we let $A = \{ p \in \mathbb{P} \: | \: p \text{ divides } k\}$, we have that $A$ is finite. In particular, $\mathbb{P} \backslash A$ has positive natural density; moreover, we have that $\mathbb{P} \backslash A \subseteq  \RP_{N,x,h}$. Thus, we have that $\RP_{N,z,h}$ is all but finitely many primes which implies that it has positive natural density. Finally, one can see that $\cup_{i=1}^{h-1} \RP_{N,z,i} \subseteq A$. Thus, either $\RP_{N,z,i}$ has positive natural density or is finite for all $i$.
\end{proof}

As a consequence, we are able to precisely compute $\Farb_N(n)$ when $N$ is a Filiform nilpotent group.
\begin{cor}
Let $N$ be a Filiform nilpotent group of Hirsch length $h \geq 3$. Then $\Farb_{N}(n) \approx (\log(n))^h$. In particular, if $N$ is the $3$-dimensional integral Heisenberg group, then $\Farb_N(n) \approx (\log(n))^3$.
\end{cor}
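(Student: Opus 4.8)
The plan is to combine the proposition just established — that every Filiform nilpotent group has accessible residual dimension — with Theorem \ref{accessible_result}, and then to pin down the exact value of $\dim_{\ARF}(N)$ using the structural facts about Filiform groups recorded above (the ranks of the lower central series quotients, which force $h(N)=h$, and $\sqrt{\ga_{h-1}(N)}\cong\Z$) together with Lemma \ref{heis_p_things}.

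First I would observe that a Filiform nilpotent group $N$ of Hirsch length $h\geq 3$ is infinite and torsion free, so $N = N/T(N)$, and that it has step length $c = h-1 > 1$ by definition. Since the preceding proposition gives that $N$ has accessible residual dimension, Theorem \ref{accessible_result} applies and produces a natural number $\dim_{\ARF}(N)$ with
$$
c + 1 \leq \dim_{\ARF}(N) \leq \psi_{\RD}(N), \qquad \Farb_N(n) \approx (\log(n))^{\dim_{\ARF}(N)}.
$$
So the corollary reduces to showing that both of these outer bounds equal $h$.

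On the one hand, $c + 1 = h$ since $c = h-1$. On the other hand, $\psi_{\RD}(N) \leq h(N) = h$: by Proposition \ref{one_dim_center_primitive_element} the real residual dimension $\dim_{\R}(N,x)$ of any primitive $x \in \sqrt{\ga_c(N)}$ is bounded above by $h(N)$, and $\psi_{\RD}(N)$ is a maximum of such quantities. (Alternatively, one can compute $\dim_{\RFL}(N) = h$ directly: since $\sqrt{\ga_{h-1}(N)}\cong\Z$ there is, up to inversion, a single primitive element $z$ of $\sqrt{\ga_c(N)}$, and Lemma \ref{heis_p_things} shows $\RP_{N,z,h}$ is cofinite in $\mathbb{P}$ while $\bigcup_{i<h}\RP_{N,z,i}$ is finite, so $h$ is the least index with $|\RP_{N,z,i}| = \infty$.) Hence $\dim_{\ARF}(N) = h$, and therefore $\Farb_N(n) \approx (\log(n))^h$. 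Finally, the $3$-dimensional integral Heisenberg group is $\mathcal{F}_3$, a Filiform nilpotent group of Hirsch length $3$, so specializing to $h = 3$ gives $\Farb_N(n) \approx (\log(n))^3$.

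I do not expect a genuine obstacle: the real work has been front-loaded into establishing accessible residual dimension and into Lemma \ref{heis_p_things}. The only care needed is to verify $h(N) = h$ and $c = h-1$ for a general (not merely standard) Filiform group, which is immediate from the definition of Filiform together with the description of the lower central series quotients noted before Lemma \ref{heis_p_things}.
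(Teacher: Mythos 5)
Your proof is correct and is essentially the argument the paper leaves implicit: Filiform groups are torsion free of step length $c=h-1>1$, the preceding proposition supplies accessible residual dimension so Theorem~\ref{accessible_result} applies, and then the squeeze $h = c+1 \leq \dim_{\ARF}(N) \leq \psi_{\RD}(N) \leq h(N) = h$ forces $\dim_{\ARF}(N) = h$. Your parenthetical alternative, computing $\dim_{\RFL}(N)=h$ directly from Lemma~\ref{heis_p_things} and the fact that $\sqrt{\ga_{h-1}(N)}\cong\Z$, is also valid and in fact slightly cleaner, since it bypasses any need to bound $\psi_{\RD}(N)$ from above.
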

\bibliography{bib}

\begin{thebibliography}{1}

\bibitem{Bou_Rabee10}
Khalid Bou-Rabee.
\newblock Quantifying residual finiteness.
\newblock {\em J. Algebra}, 323(3):729--737, 2010.

\bibitem{Bou_Rabee_Kaletha}
Khalid Bou-Rabee and Tasho Kaletha.
\newblock Quantifying residual finiteness of arithmetic groups.
\newblock {\em Compos. Math.}, 148(3):907--920, 2012.

\bibitem{Gromov}
M.~Gromov.
\newblock Asymptotic invariants of infinite groups.
\newblock In {\em Geometric group theory, {V}ol.\ 2 ({S}ussex, 1991)}, volume
  182 of {\em London Math. Soc. Lecture Note Ser.}, pages 1--295. Cambridge
  Univ. Press, Cambridge, 1993.

\bibitem{Hall_notes}
Philip Hall.
\newblock {\em The {E}dmonton notes on nilpotent groups}.
\newblock Queen Mary College Mathematics Notes. Mathematics Department, Queen
  Mary College, London, 1969.

\bibitem{Eick_Holt_obrien}
Derek~F. Holt, Bettina Eick, and Eamonn~A. O'Brien.
\newblock {\em Handbook of computational group theory.}
\newblock Discrete Mathematics and its Applications. Chapman \& Hall/CRC, Boca
  Raton, FL., 2005.

\bibitem{Pengitore_1}
Mark Pengitore.
\newblock Effective separability of finitely generated nilpotent groups.
\newblock {\em New York J. Math.}, 24:83--145, 2018.

\bibitem{Segal_book_polycyclic}
Daniel Segal.
\newblock {\em Polycyclic groups.}, volume~82 of {\em Cambridge Tracts in
  Mathematics}.
\newblock Cambridge University Press, Cambridge, 1983.

\bibitem{Tenenbaum}
G\'{e}rald Tenenbaum.
\newblock {\em Introduction to analytic and probabilistic number theory.},
  volume~46 of {\em Cambridge Studies in Advanced Mathematics.}
\newblock Cambridge University Press, Cambridge, 1995.

\end{thebibliography}
\bibliographystyle{plain}
\end{document}